\newcommand{\comment}[1]{}
\newcommand{\BEA}{\begin{eqnarray}}
\newcommand{\EEA}{\end{eqnarray}}
\newcommand{\BR}{\mathbb{R}}
\newtheorem{thm}{Theorem}[section]
\newtheorem{prop}[thm]{Proposition}
\newtheorem{example}[thm]{Example}
\newtheorem{lem}[thm]{Lemma}
\newtheorem{cor}[thm]{Corollary}
\newtheorem{rem}[thm]{Remark}
\newcommand{\PreserveBackslash}[1]{\let\temp=\\#1\let\\=\temp}
\newcolumntype{C}[1]{>{\PreserveBackslash\centering}p{#1}}
\newcolumntype{R}[1]{>{\PreserveBackslash\raggedleft}p{#1}}
\newcolumntype{L}[1]{>{\PreserveBackslash\raggedright}p{#1}}
\newcommand{\stkout}[1]{\ifmmode\text{\sout{\ensuremath{#1}}}\else\sout{#1}\fi}
\begin{document}

\title{Generalized Finite Difference Method on unknown manifolds}
\author{
Shixiao Willing Jiang \\
Institute of Mathematical Sciences, ShanghaiTech University, Shanghai
201210, China\\
\texttt{jiangshx@shanghaitech.edu.cn}
\And Rongji Li \\
School of Information Science and Technology, ShanghaiTech University, Shanghai
201210, China\\
\texttt{lirj2022@shanghaitech.edu.cn}
\And Qile Yan \\
Department of Mathematics, The Pennsylvania State University, University
Park, PA 16802, USA\\
\texttt{qzy42@psu.edu}
\And
John Harlim \\
Department of Mathematics, Department of Meteorology and Atmospheric
Science, \\
Institute for Computational and Data Sciences \\
The Pennsylvania State University, University Park, PA 16802, USA\\
\texttt{jharlim@psu.edu}
}
\date{\today}
\maketitle

\begin{abstract}
In this paper, we extend the Generalized Finite Difference Method (GFDM) on unknown compact submanifolds of the Euclidean domain, identified by randomly sampled data that (almost surely) lie on the interior of the manifolds. Theoretically, we formalize GFDM by exploiting a representation of smooth functions on the manifolds with Taylor's expansions of polynomials defined on the tangent bundles. We illustrate the approach by approximating the Laplace-Beltrami operator, where a stable approximation is achieved by a combination of Generalized Moving Least-Squares algorithm and novel linear programming that relaxes the diagonal-dominant constraint for the estimator to allow for a feasible solution even when higher-order polynomials are employed. We establish the theoretical convergence of GFDM in solving Poisson PDEs and numerically demonstrate the accuracy on simple smooth manifolds of low and moderate high co-dimensions as well as unknown 2D surfaces. For the Dirichlet Poisson problem where no data points on the boundaries are available, we employ GFDM with the volume-constraint approach that imposes the boundary conditions on data points close to the boundary. When the location of the boundary is unknown, we introduce a novel technique to detect points close to the boundary without needing to estimate the distance of the sampled data points to the boundary. We demonstrate the effectiveness of the volume-constraint employed by imposing the boundary conditions on the data points detected by this new technique compared to imposing the boundary conditions on all points within a certain distance from the boundary, where the latter is sensitive to the choice of truncation distance and require the knowledge of the boundary location.
\end{abstract}

\keywords{GMLS, boundary detection, Poisson problems, unknown manifolds, polynomials on tangent bundles}

\lhead{}

\newpage

\section{Introduction}

An important scientific problem that commands a wide range of applications in the natural sciences and practical engineering is to solving Partial Differential Equations (PDEs) on manifolds. In image processing, applications include segmentation of images \cite{tian2009segmentation}, image inpainting \cite{shi2017weighted} and restoration of damaged patterns \cite{bertalmio2001navier,macdonald2010implicit}. In computer graphics, applications include flow field visualization \cite{bertalmio2001variational}, surface reconstruction \cite{zhao2001fast} and brain imaging \cite{memoli2004implicit}. In physics and biology, applications include granular flow \cite{rauter2018finite}, phase formation \cite{dogel2005two} and phase ordering \cite{schoenborn1999kinetics} on surfaces, Liquid crystals on deformable surfaces \cite{nitschke2020liquid} and phase separation of biomembranes \cite{elliott2010modeling}.

In this paper, we will focus on mesh-free PDE solvers that is convenient for the setting when the manifold is only identified by randomly sampled data; that is, when triangular mesh is not available as such parameterization can be difficult to obtain when the available point cloud is randomly sampled and when the manifolds are high-dimensional and/or embedded in a high-dimensional ambient space. The setup considered in this paper assumes that for Dirichlet problems, data points on the boundary are (almost surely) not available since the boundary is a volume-measure zero set.

Several mesh-free approaches have been developed to address this issue. Among the collocation method, they include the global Radial Basis Function methods \cite{piret2012orthogonal,fuselier2013high}, the RBF-generated finite difference (FD) methods \cite{shankar2015radial,lehto2017radial}, graph-based approaches \cite{li2016convergent,li2017point,gh2019,jiang2020ghost,yan2022ghost}, Generalized Moving Least-Squares (GMLS) \cite{liang2013solving,gross2020meshfree}, and Generalized Finite Difference Method (GFDM) \cite{suchde2019meshfree}. It is worth to note that GFDM also uses GMLS for approximating the function and its derivatives, whereas the GMLS method in \cite{liang2013solving,gross2020meshfree} employs the least-squares to estimate the metric tensor and the function (and its derivatives).  In Table~\ref{tab_comp}, we list several characteristics of these methods that are relevant to practical applications. The first consideration is whether these methods can approximate general differential operators. On this aspect, all of these methods can do so except the graph-based approach which is limited to estimating the Laplacian type operator. Second, in terms of parameter robustness, RBF and graph-based methods depend crucially on the choice of the shape parameter in the radial basis function that is being used. Third, all of these methods construct a sparse approximation except for the global RBF. The fourth consideration is the stability of the Laplacian estimator, which is important for solving Poisson problems. It is well-known that RBF is an unstable interpolator (see Chapter~12 of \cite{Wendland2005Scat}). On the other hand, graph-Laplacian is a stable estimator as it induces a strictly diagonally dominant matrix \cite{gh2019,jiang2020ghost,yan2022ghost}. As for GMLS and GFDM, it does not automatically produce a stable estimator but the discrete approximate matrix can be stabilized with additional constraints (see Section~3.4 of \cite{liang2013solving} and Appendix A of \cite{suchde2019meshfree}). The fifth aspect is whether the method can address unknown manifolds with boundaries as in our setup where no boundary points are sampled. On this aspect, it is unclear how the RBFs technique can address this issue. In this paper, we will demonstrate that GFDM can address this configuration. Finally, easy implementation in practical problems is an important consideration. Among these methods, all of them can be easily implemented except for GMLS \cite{liang2013solving,gross2020meshfree} since the latter requires one to hard-code the representation of differential operators in local coordinates, which formulation can become cumbersome for higher-order derivatives of vector fields or forms as it depends on the estimated Riemannian metric tensor, its derivatives, and inverse.

\begin{table}[htp]
\caption{Comparison of several mesh-free solvers on unknown manifolds identified by randomly sampled data.}
\begin{center}
\begin{tabular}{|c|c|c|c|c|c|c|}
\hline
 & global RBF & RBF-FD & Graph-based & GMLS \cite{liang2013solving,gross2020meshfree} & GFDM \\ \hline
general operator & \checkmark & \checkmark &  & \checkmark & \checkmark \\
parameter robustness & & &  & \checkmark & \checkmark \\
sparsity & & \checkmark & \checkmark & \checkmark & \checkmark \\
stability & & & \checkmark & \checkmark & \checkmark \\
boundary & & &\checkmark & \checkmark & \checkmark \\
easy-implementation & \checkmark & \checkmark & \checkmark & & \checkmark
    \\ \hline
\end{tabular}
\end{center}
\label{tab_comp}
\end{table}%

Based on these considerations, we study GFDM which was first considered for solving PDEs on manifolds in  \cite{suchde2019meshfree}. In this paper, we establish the consistency and convergence of GFDM in solving Poisson problems for closed manifolds and compact manifolds with homogeneous Dirichlet boundary condition. The theoretical results provide error bounds that depend on the number of point cloud data, dimension of the manifold, and regularity of the solution/estimator. The consistency analysis relies on a representation of smooth functions on the manifolds with Taylor's expansions of polynomials defined on the tangent bundles and the established error analysis of GMLS \cite{mirzaei2012generalized}. From the analysis, one can also interpret GFDM as a generalization of the approach in \cite{flyer2016role} from Euclidean domain to submanifolds of Euclidean domain. To achieve the convergence, a stable GFDM estimator is constructed by solving a linear programming with additional constraints. The additional constraints in the proposed linear programming are devised to allow for a feasible solution even when higher-order polynomials are used, generalizing the quadratic programming approach in \cite{liang2013solving} which provides feasible solutions only for polynomials of degree less than four.

For the Dirichlet problem, we consider the volume-constraint approach which constructs a linear algebraic equation by imposing the PDEs strongly on points sufficiently away from the boundary and boundary conditions on the points that are sufficiently close to the boundary. While theoretical analysis is deduced by identifying these points through their distances from the boundary, we introduce a novel method to identify the points close to the boundary without needing to know the location of the boundary nor to approximate their distances from the boundary. The new close-to-boundary point detection method identifies points near the boundary using the sign of the weight of the negative-definite Laplacian estimator from {\color{black}a local least-squares regression approach} on each based point. Analogous to the one-sided finite-difference approximation of the second-order derivatives on the Euclidean domain, the weight on the based point will have a positive sign for points close to the boundary as we shall see. We will show the effectiveness of this novel close-to-boundary point detection method in solving the Poisson problem compared to the standard approach that imposes the boundary conditions to all points whose distance to the boundary is smaller than an empirically chosen distance.

The paper is organized as follows. In Section~\ref{sec2}, we provide a short review of the intrinsic polynomial expansion of functions and its derivatives on manifolds. In Section~\ref{Sec:FD_scheme}, we discuss the GFDM as an estimator of the Laplace-Beltrami operator, where linear programming is introduced to construct a stable estimator and as a byproduct, a close-to-boundary point detection algorithm is introduced. In Section~\ref{pde}, we provide a theoretical study for an application to solving Poisson problems. In Section~\ref{numerics}, we provide supporting numerical examples. We conclude the paper with a summary and a list of open problems in Section~\ref{sec6}. We include some proofs in AppendiX~\ref{app:A}, additional numerical results to show the consistency and convergence of GFDM when points on the boundary are available in Appendix~\ref{app:B}, and other approaches such as RBF-FD and graph-based VBDM in Appendix~\ref{app:C}.

\section{Preliminaries}\label{sec2}

\subsection{An intrinsic polynomial expansion of functions on manifolds}\label{Sec:Taylor}
Let $M$ be a $d-$dimensional manifold embedded in the ambient Euclidean space, $\mathbb{R}^n$. Let $f$ be a smooth function on $M$ and fix a point $\textbf{x}_0\in M$.  The goal of this section is to represent the geodesic normal coordinate expansion of $f$ at $\textbf{x}\in M$ near to the point $\textbf{x}_0$ in terms of "intrinsic" polynomials (or local polynomials defined on the tangent space at the base point $\textbf{x}_0$). First, we recall that the exponential or Riemannian normal coordinates at $\textbf{x}_0$ could be defined by identifying the tangent space $T_{\textbf{x}_0}M$ with $\mathbb{R}^d$ and using that the exponential map $\text{exp}_{\textbf{x}_0}:T_{\textbf{x}_0}M\rightarrow M$ is a diffeomorphism on a neighborhood of the origin (also see Section 5.5 in \cite{lee2018introduction}). Particularly, for any point $\textbf{x} \in M$  in the neighborhood of $\textbf{x}_0 \in M$, its geodesic distance is $\|\vec{s}\|$, where $\vec{s}=(s_1,...,s_d)$ denotes the corresponding normal coordinate that satisfies,
$$
\text{exp}_{\textbf{x}_0}(\vec{s})=\textbf{x},\quad \text{with}\ \ \text{exp}_{\textbf{x}_0}(\vec{0})=\textbf{x}_0.
$$
For convenience of the discussion below, we define a local mapping $X:\mathbb{R}^d\rightarrow M\subset \mathbb{R}^n$ as $X(\vec{s})=\text{exp}_{\textbf{x}_0}(\vec{s})$. We denote the $i$th tangent vector $\textbf{t}_i = \frac{\partial}{\partial s_i}X(\vec{0}) \in \mathbb{R}^n,$ which is also the $i$th column of the $n\times d$ Jacobian matrix of $X$, denoted by $\textbf{J}$ with components $\textbf{J}_{ji}=\frac{\partial }{\partial s_i}X^j(\vec{0})$ for $i=1,\ldots,d$ and $j=1,\ldots,n$. Since $\vec{s}$ is a normal coordinate of $\textbf{x}$ with reference $\textbf{x}_0$,  then $\big\langle\frac{\partial}{\partial s_i}X(\vec{0}),\frac{\partial}{\partial s_j}X(\vec{0})\big\rangle= \textbf{t}_i^\top \textbf{t}_j = \delta_{ij}$, where $\delta_{ij}$ is the Kronecker delta.

The Taylor expansion of $X$ centered at $\vec{0}$ is
\BEA
X(\vec{s})-X(\vec{0})=\sum_{i=1}^{d} s_{i} \frac{\partial}{\partial s_{i}} X(\vec{0})+\frac{1}{2} \sum_{i, j=1}^{d} s_{i} s_{j} \frac{\partial^2}{\partial s_{i} \partial s_{j}} X(\vec{0})+O\left(\|\vec{s}\|^{3}\right).
\EEA
 Multiplying by $\textbf{J}^\top$ at both sides of above equality,
\BEA
\textbf{z}:=\textbf{J}^\top(\textbf{x}-\textbf{x}_0)=\textbf{J}^\top(X(\vec{s})-X(\vec{0}))=\vec{s}+\frac{1}{2}\sum_{i,j=1}^ds_is_j\vec{\Gamma}_{ij}+O(\Vert \vec{s}\Vert^3).
\EEA
where we used the notation $\vec{\Gamma}_{ij}=(\Gamma_{ij}^1,...,\Gamma_{ij}^d)^\top$ with Christoffel symbols $\Gamma_{ij}^k = \big\langle\frac{\partial}{\partial s_k}X(\vec{0}),\frac{\partial^2}{\partial s_i\partial s_j}X(\vec{0})\big\rangle$. The resulting $d$-dimensional vector $\textbf{z}=(z^1,..., z^d)^\top$ has components $z^i=\textbf{t}_i^\top (\textbf{x}-\textbf{x}_0)$. Using above equality we can write the normal coordinate $\vec{s}$ in terms of the $d$-dimensional Cartesian coordinate $\textbf{z}$:
\BEA
\vec{s}&=&\textbf{z}-\frac{1}{2}\sum_{i,j=1}^d(z^i+O(\Vert\vec{s}\Vert^2))(z^j+O(\Vert\vec{s}\Vert^2))\vec{\Gamma}_{ij}+O(\Vert\vec{s}\Vert^3)\notag\\
&=& \textbf{z}-\frac{1}{2}\sum_{i,j=1}^d z^i z^j\vec{\Gamma}_{ij}+O(\Vert\vec{s}\Vert^3),\label{exp_s}
\EEA
where we use the relation $\Vert\vec{s}\Vert=O(\Vert\textbf{z}\Vert)$.

For a smooth function $f$ on $M$, we can define a function $\tilde{f}:\mathbb{R}^d\rightarrow \mathbb{R}$ by setting $\tilde{f}(\vec{s}):=f(\text{exp}_{\textbf{x}_0}(\vec{s}))=f(\textbf{x})$ with $\tilde{f}(\vec{0})=f(\textbf{x}_0)$. A Taylor expansion of $\tilde{f}$ with respect to $\vec{s}$ could be written as
\BEA
f(\textbf{x})=\tilde{f}(\vec{s})=\tilde{f}(\vec{0})+\sum_{i=1}^{d} s_{i} \frac{\partial}{\partial s_{i}} \tilde{f}(\vec{0})+\frac{1}{2} \sum_{i, j=1}^{d} s_{i} s_{j} \frac{\partial^{2}}{\partial s_{i} \partial s_{j}} \tilde{f}(\vec{0})+O(\Vert \vec{s}\Vert^3).
\label{eqn:taylor_exp}
\EEA
Substituting \eqref{exp_s} into \eqref{eqn:taylor_exp}, we obtain
$$
\begin{aligned}
f(\textbf{x})&=\tilde{f}(\vec{0})+\sum_{k=1}^d( z^k-\frac{1}{2}\sum_{i,j=1}^d z^i z^j\Gamma^k_{ij})\frac{\partial}{\partial s_k}\tilde{f}(\vec{0})+\frac{1}{2}\sum_{i,j=1}^dz^iz^j\frac{\partial^2}{\partial s_i\partial s_j}\tilde{f}(\vec{0})+O(\Vert\textbf{z}\Vert^3)\\
&=f(\textbf{x}_0)+\sum_{k=1}^dz^k\frac{\partial}{\partial s_k}\tilde{f}(\vec{0})+\frac{1}{2}\sum_{i,j=1}^dz^iz^j\left(\frac{\partial^2}{\partial s_i\partial s_j}\tilde{f}(\vec{0})-\sum_{k=1}^d\Gamma_{ij}^k\frac{\partial}{\partial s_k}\tilde{f}(\vec{0}) \right)+O(\Vert\textbf{z}\Vert^3).
\end{aligned}
$$
Since $\frac{\partial}{\partial s_k}\tilde{f}(\vec{0}),\frac{\partial^2}{\partial s_i\partial s_j}\tilde{f}(\vec{0}),\Gamma_{ij}^k$ are constants that only depend on $\textbf{x}_0$, the above equality suggests the following equation holds on a neighborhood of $\textbf{x}_0$:
\BEA
f(\textbf{x})=f(\textbf{x}_0)+\sum_{1\leq |\alpha|\leq 2}b_\alpha \textbf{z}^\alpha+O(\Vert\textbf{z}\Vert^3).
\label{eqn:int_poly}
\EEA
Using the same argument, one can also show that for any degree $l\geq 2$,
\BEA
f(\textbf{x})=f(\textbf{x}_0)+\sum_{1\leq|\alpha|\leq l}b_\alpha \textbf{z}^\alpha+O(\Vert \textbf{z}\Vert^{l+1}).
\label{eqn:int_poly2}
\EEA
where the coefficients $\{b_\alpha\}_{1\leq|\alpha|\leq l}$ depend only on $\textbf{x}_0$, $\alpha=(\alpha_1,..,\alpha_d)$ denotes the multi-index notation with $|\alpha|=\sum_{i=1}^d\alpha_i$, and $\textbf{z}^\alpha = \prod_{i=1}^d (z^i)^{\alpha_i}$.

\begin{rem}\label{rem_basis}
The key point in \eqref{eqn:int_poly2} is that any smooth functions on manifolds can be Taylor expanded with respect to the intrinsic polynomial basis functions $\{\mathbf{z}^\alpha\}_{|\alpha|\leq l}$.
The coefficients $\{b_\alpha\}_{|\alpha|\leq l}$ will be determined via a least-squares regression approach as shown in Section \ref{Sec:int_LS}. Incidentally, we will not pursue the dependence of the coefficients on the geometry for high-order terms due to its complexity. While all Christoffel symbols vanish at the base point in the normal coordinate, we still keep them in the above derivation to indicate that the dependence on the geometry can be very complicated for high-order terms.
% and their dependence on the geometry is not important which will be contained in the regression
\end{rem}

\subsection{Review of approximation of differential operators on manifolds}\label{Sec:diff_ambient}
In this section, we first review the representations of the differential operators  on manifolds as tangential gradients  \cite{fuselier2013high,harlim2022rbf}, which could be formulated as the projection of the appropriate derivatives in the ambient space. Subsequently, we approximate the differential operators based on a local polynomial approximation based on the expansion discussed in the previous subsection.

For any point $\textbf{x}=(x^1,...,x^n)\in M \subset \mathbb{R}^n$, we denote the tangent space of $M$ at $\textbf{x}$ as $T_\textbf{x}M$ and  a set of orthonormal tangent vectors that span this tangent space as $\{\textbf{t}_i\}_{i=1}^d$. Then the projection matrix $\textbf{P}$ which projects vectors in $\mathbb{R}^n$ to $T_\textbf{x}M$ could be written as $\textbf{P}=\sum_{i=1}^d\textbf{t}_i\textbf{t}_i^\top$ (see e.g. \cite{harlim2022rbf} for a detailed derivation). Subsequently, the surface gradient acting on a smooth function $f:M\to \mathbb{R}$ evaluated at $\textbf{x}\in M$ in the Cartesian coordinates can be given as,
$$
\textup{grad}_g f(\mathbf{x}):=\textbf{P}\overline{\textup{grad}}_{\mathbb{R}^n}f(\mathbf{x})=\Big(\sum_{i=1}^d\textbf{t}_i\textbf{t}_i^\top\Big)\overline{\textup{grad}}_{\mathbb{R}^n}f(\mathbf{x}),
$$
where $\overline{\textup{grad}}_{\mathbb{R}^n}=[\partial_{x^1}, \cdots, \partial_{x^n}]^\top$ is the usual gradient operator defined in $\mathbb{R}^n$ and the subscript $g$ is to denote the differential operator defined with respect to the Riemannian metric $g$ induced by $M$ from $\mathbb{R}^n$. Let $\mathbf{e}_\ell,\ell=1,...,n$ be the standard orthonormal vectors corresponding to the coordinate $x^\ell$  in Euclidean space $\mathbb{R}^n$, we can rewrite above expression in component form as
\BEA
\textup{grad}_g f (\mathbf{x}):=\left[\begin{array}{c}\left(\mathbf{e}_{1} \cdot \mathbf{P}\right) \overline{\textup{grad}}_{\mathbb{R}^n} f(\mathbf{x}) \\ \vdots\\ \left(\mathbf{e}_{n} \cdot \mathbf{P}\right) \overline{\textup{grad}}_{\mathbb{R}^n}f(\mathbf{x}) \end{array}\right]
=\left[\begin{array}{c}\mathbf{P}_{1} \cdot \overline{\textup{grad}}_{\mathbb{R}^n}f(\mathbf{x}) \\ \vdots \\ \mathbf{P}_{n} \cdot \overline{\textup{grad}}_{\mathbb{R}^n} f(\mathbf{x})\end{array}\right]:=\left[\begin{array}{c}\mathcal{G}_{1}f(\mathbf{x}) \\ \vdots\\ \mathcal{G}_{n} f(\mathbf{x})\end{array}\right]
\label{eqn:grad}
\EEA
where $\textbf{P}_\ell$ is the $\ell-$th row of the projection matrix $\textbf{P}$. With this identity, one can write
the Laplace-Beltrami operator of $f$ at $\textbf{x}\in M$ as
\BEA
\Delta_{M}f (\mathbf{x}) :=\textup{div}_{g} \textup{grad}_{g} f (\mathbf{x})=\left(\mathbf{P} \overline{\textup{grad}}_{\mathbb{R}^n}\right) \cdot\left(\mathbf{P} \overline{\textup{grad}}_{\mathbb{R}^n}\right)f (\mathbf{x})=\sum_{\ell=1}^n\mathcal{G}_{\ell} \mathcal{G}_{\ell}f (\mathbf{x}),
\label{eqn:laplace}
\EEA
where $\Delta_{M}$ is defined to be negative-definite.

% Since $f$ is in \eqref{eqn:grad} and \eqref{eqn:laplace} is unknown, it needs to be approximated.
To approximate the differential operators in \eqref{eqn:grad} and \eqref{eqn:laplace}, we need an approximate model for $f$. There are various ways to solve this problem, including finding an interpolation solution (or effectively collocation method) \cite{shankar2015radial,fuselier2013high} or regression solution \cite{liang2013solving,suchde2019meshfree,gross2020meshfree}. In this paper, we consider a regression solution. Let us illustrate the idea in approximating the
Laplace-Beltrami operator on manifolds. Let $f:M\rightarrow\mathbb{R}$ be an arbitrary smooth function. Given a set of (distinct) nodes $X=\{\textbf{x}_i\}_{i=1}^N\subset M$ and function values $\mathbf{f}:=(f(\textbf{x}_1),\ldots, f(\textbf{x}_N))^\top$. Let $\mathcal{I}:\mathbf{f}\in\mathbb{R}^N\rightarrow \mathcal{I}\mathbf{f}\in C^m(\mathbb{R}^n)$ be an arbitrary approximation operator corresponding to the estimator $\mathcal{I}\textbf{f}$ for the function $f$. For convenience of the discussion below, we define the following restriction operator,
$$
\mathcal{R}_Xf=\textbf{f}=(f(\textbf{x}_1),...,f(\textbf{x}_N))^\top, \quad \textbf{x}_i \in X.
$$
For an interpolation solution, one seeks an estimator for the Laplace-Beltrami operator acting on any $f\in C^2(M)$ that satisfies,
%In \cite{fuselier2013high,harlim2022rbf}, global Radial Basis Functions (RBF) were used for the interpolation.
\comment{In practice, some common choices of radial function include the Gaussian $\phi_s(r)=\text{exp}(-(sr)^2)$ \cite{fasshauer2012stable} or Matern class kernel \cite{fuselier2013high} where $s$ is called the shape parameter.  For conditionally positive definite radial function, one may include polynomial terms in above interpolation, together with some constraints on the coefficients (see Section 2.1 of \cite{flyer2016role} for example). An example of conditionally positive definite radial function is polyharmonic splines (PHS): $\phi(r)=r^{2m-1}$ or $r^{2m}\log r, m\in\mathbb{N}$. Notice that no shape parameter is needed in PHS radial function.}
\BEA
\mathcal{R}_X\Delta_Mf=\mathcal{R}_X\big[\sum_{\ell=1}^n\mathcal{G}_\ell \mathcal{G}_\ell f\big] \approx \big[ \sum_{\ell=1}^n (\mathcal{R}_X\mathcal{G}_\ell \mathcal{I}) (\mathcal{R}_X \mathcal{G}_\ell \mathcal{I}\mathbf{f})\big].\label{LBestimator}
\EEA
Practically, this also requires an appropriate approximate model (e.g., radial basis interpolants \cite{fuselier2013high,shankar2015radial} with a number of coefficient parameters that equals the number of training data, $N$). Importantly, the distribution of the data in $X$ plays a crucial role in ensuring the wellposed-ness of the interpolating solution (see Chapter~12 of \cite{Wendland2005Scat}). In this paper, we will consider imposing the condition in \eqref{LBestimator} in a least-squares sense, where we will employ the local polynomial model based on the Taylor expansion discussed in the previous section, such that the number of local regression coefficients is much smaller than the number of training data, $N$. In the next section, we shall see that the proposed approach yields  a finite-difference type scheme based on the polynomial least squares approximation. We call the proposed approach a finite-difference type method analogous to the classical finite-difference approximation on equispaced grid points that arise from taking derivatives of the polynomial interpolating function.

\comment{
For any $f\in C(M)$, we define a restriction operator $R_N$ by
$$
R_Nf=\textbf{f}=(f(\textbf{x}_1),...,f(\textbf{x}_N))^\top.
$$
Using the interpolation operator, the restriction operator and the formulation of differential operators in (\ref{eqn:grad}) and (\ref{eqn:laplace}), one can obtain the following approximation of differential operators: for $\ell=1,...,n$,
$$
R_N\mathcal{G}_\ell f\approx R_N( \mathcal{G}_\ell I\mathbf{f}),
$$
and
$$
R_N\Delta_Mf=R_N\left[-\sum_{\ell=1}^n\mathcal{G}_\ell \mathcal{G}_\ell f\right]\approx R_N\left[ -\sum_{\ell=1}^n\mathcal{G}_\ell I (R_N \mathcal{G}_\ell I\mathbf{f})\right].
$$
If the RBF is used for interpolation, then the discrete approximation of $\mathcal{G}_\ell$ on $X$ could be represented by the $N$ by $N$ matrix $G_\ell$ with components $(G_\ell)_{ij}=(\mathcal{G}_\ell\phi(\textbf{x}-\textbf{x}_j))|_{\textbf{x}=\textbf{x}_i}$. Besides, the discrete approximation of the Laplace Beltrami operator $\Delta_M$  on $X$ could be written as $L_X:=\sum_{\ell=1}^nG_\ell G_\ell$ (see \cite{fuselier2013high} for details).

Although employing the global RBF interpolation could achieve a high convergence rate for estimating the operator  \cite{fuselier2013high}, it may not be practical for large-scale problems due to the resulting full matrix. To reduce the computational cost, the authors in \cite{shankar2015radial} proposed  a RBF-generated Finite Differences (FD) scheme for approximating operators on manifolds by applying RBF interpolation locally at each point which lead to sparse matrices. In Section \ref{Sec:FD_scheme}, we will design a finite-difference type scheme using polynomial least squares approximation instead of the RBF interpolation. We will discuss our proposed scheme and RBF-FD in Section \ref{Poly_vs_RBF}.}

\section{GFDM approximation of Laplace-Beltrami on manifolds}\label{Sec:FD_scheme}
%Let $M$ be a $d$-dimensional smooth manifold of $\mathbb{R}^n$. Given a set of (distinct) nodes $X=\{\textbf{x}_i\}_{i=1}^N\subset M$ and function values $\mathbf{f}:=(f(\textbf{x}_1),\ldots, f(\textbf{x}_N))^\top$ where $f:M\rightarrow\mathbb{R}$ is an arbitrary smooth function.

For an arbitrary point ${\textbf{x}_0}\in X\subset M$, we denote its $K-$nearest neighbors in $X$ by $S_{{\textbf{x}_0}}=\{{\textbf{x}_{0,k}}\}_{k=1}^K\subset X$. In most literature, such a set $S_{{\textbf{x}_0}}$ is called  a ``stencil''. By definition, we have ${\textbf{x}_{0,1}}={\textbf{x}_0}$ to be the base point. Denote $\textbf{f}_{{\textbf{x}_0}}=(f({\textbf{x}_{0,1}}),...,f(\textbf{x}_{0,K}))^\top$. Our goal is to approximate the Laplace-Beltrami operator $\Delta_M$ acting on a function $f$ at the base point ${\textbf{x}_0}$ by a linear combination of the function values $\{f({\textbf{x}_{0,k}})\}_{k=1}^K$, i.e., find the weights $\{w_k\}_{k=1}^K$ such that
\BEA
\Delta_M f({\textbf{x}_0}) \approx \sum_{k=1}^Kw_kf({\textbf{x}_{0,k}}).
\label{eqn:FD}
\EEA
Arranging the weights at each point into each row of a sparse $N$ by $N$ matrix $L_X$, we can approximate the operator over all points by $L_X\mathbf{f}$.

The remainder of this section is organized as follows. In Subsection~\ref{Sec:int_LS}, we will demonstrate how to calculate the weights $\{w_k\}_{k=1}^K$ in (\ref{eqn:FD}) for the Laplace-Beltrami operator by applying the approach introduced in Section~\ref{Sec:diff_ambient}, together with a polynomial least squares approximation. In this paper, we will consider a type of generalized moving least-squares (GMLS) fitting to the intrinsic polynomials, which yields the generalized finite difference method (GFDM) that was introduced in \cite{suchde2019meshfree}. At the end of this subsection, we will mention the existing theoretical error bound for GMLS \cite{mirzaei2012generalized} and numerically verify this convergence rate on a simple test example and simultaneously demonstrate the advantages of this representation over the extrinsic polynomial approach proposed in \cite{flyer2016role}. In Subsection~\ref{stabilization_section}, we will see how unstable is the operator approximation and we will impose additional constraints to stabilize the intrinsic polynomial least squares discretization. In Section~\ref{pde}, we will apply the approximation to solve Poisson problems on compact manifolds without and with boundaries.

 \subsection{Intrinsic polynomial least squares approximation}\label{Sec:int_LS}
Inspired by the expansion (\ref{eqn:int_poly2}) in Section \ref{Sec:Taylor}, we consider using the "intrinsic" polynomials to approximate functions over a neighborhood of ${\textbf{x}_0}$. First, let $\mathbb{P}_{\textbf{x}_0}^{l,d}$ be the space of intrinsic polynomials with degree up to $l$ in $d$-dimensions at the point ${\textbf{x}_0}$, i.e., $\mathbb{P}_{\textbf{x}_0}^{l,d}=\text{span}(\{p_{{\textbf{x}_0},\alpha}\}_{|\alpha|\leq l})$, where  $\alpha=(\alpha_1,..,\alpha_d)$ is the multi-index notation and $p_{{\textbf{x}_0},\alpha}$ is the basis polynomial functions defined as
$$
 p_{{\textbf{x}_0},\alpha}(\textbf{x})= \textbf{z}^{\alpha} =  \prod_{i=1}^d (z^i)^{\alpha_i}= \prod_{i=1}^d \left[ \textbf{t}_{{\textbf{x}_0},i}\cdot(\textbf{x}-{\textbf{x}_0})\right]^{\alpha_i},\quad |\alpha|\leq l.
$$
Here, $ \textbf{t}_{{\textbf{x}_0},i}$ is the $i$th tangent vector at ${\textbf{x}_0}$ as defined in Section \ref{Sec:diff_ambient}. By definition, the dimension of the space  $\mathbb{P}_{\textbf{x}_0}^{l,d}$ is $m=\left(\begin{matrix}l+d\\ d\end{matrix}\right)$. For $K>m$, we can define an operator $\mathcal{I}_{\mathbb{P}}:\textbf{f}_{{\textbf{x}_0}}\in\mathbb{R}^K\rightarrow \mathcal{I}_{\mathbb{P}}\textbf{f}_{{\textbf{x}_0}}\in \mathbb{P}_{\textbf{x}_0}^{l,d}$ such that $\mathcal{I}_{\mathbb{P}}\textbf{f}_{{\textbf{x}_0}}$ is the optimal solution of the following least-squares problem:%unique solution of the least squares problem:
 \BEA
\underset{q\in \mathbb{P}_{\textbf{x}_0}^{l,d}}{\operatorname{min}}\sum_{k=1}^K\left(f({\textbf{x}_{0,k}})-q ({\textbf{x}_{0,k}})\right)^2.
 \label{eqn:int_LS}
 \EEA
One can also consider weighted least-squares with weighted norms as in {\cite{Wendland2005Scat,liang2013solving,gross2020meshfree}}, where an additional calibration of the weights yields more accurate results when the data is quasi-uniform. Here, we only consider an equal weight since we do not see advantages in our setting with randomly sampled data.
The solution to the least-squares problem (\ref{eqn:int_LS}) can be represented as $\mathcal{I}_{\mathbb{P}}\textbf{f}_{{\textbf{x}_0}}=\sum_{|\alpha|\leq l}b_\alpha p_{{\textbf{x}_0},\alpha}$, where the concatenated coefficients $\textbf{b}=(b_{\alpha(1)},...,b_{\alpha(m)})^\top$ satisfy the normal equation,
 \BEA
 (\boldsymbol{\Phi}^\top\boldsymbol{\Phi})\textbf{b}=\boldsymbol{\Phi}^\top\textbf{f}_{{\textbf{x}_0}}.
 \label{eqn:inter}
 \EEA
Notice that  here is $m$ number of coefficients $\alpha$ such that $|\alpha|\leq l$ and thereafter we used the notation $\left\{\alpha(j)\right\}_{j=1,\dots, m}$ to denote all possible $m$ multi-indices. The above normal equation can be uniquely identified if $\boldsymbol{\Phi}$, with components
 \BEA
 \boldsymbol{\Phi}_{kj}=p_{{\textbf{x}_0},\alpha(j)}({\textbf{x}_{0,k}}),\quad 1\leq k\leq K,\ 1\leq j\leq m,
 \label{eqn:matrix_phi}
 \EEA
is a full rank matrix.

%Now let us discuss how to approximate the Laplace-Beltrami on functions $f$ over the stencil $S_{\underline{\textbf{x}}}$ by differentiating the polynomial $\mathcal{I}\textbf{f}_{\underline{\textbf{x}}}$.
Using the notations defined in Section \ref{Sec:diff_ambient}, we can approximate the differential operator,
$$
\mathcal{G}_\ell f({\textbf{x}_{0,k}})\approx (\mathcal{G}_\ell \mathcal{I}_{\mathbb{P}}\textbf{f}_{{\textbf{x}_0}})({\textbf{x}_{0,k}})=\sum_{|\alpha|\leq l}b_{\alpha}\mathcal{G}_\ell p_{{\textbf{x}_0},\alpha}({\textbf{x}_{0,k}}),\quad \forall k=1,...,K, \ \ell=1,...,n.
$$
For each $\ell$, the above relation can also be written in matrix form as,
\BEA
  \left[\begin{array}{c}\mathcal{G}_\ell f({\mathbf{x}_{0,1}}) \\ \vdots \\ \mathcal{G}_\ell f({\mathbf{x}_{0,K}})\end{array}\right]
 \approx
 %\left[\begin{array}{c}\mathcal{G}_\ell \mathcal{I}_{\mathbb{P}} f({\mathbf{x}_{0,1}}) \\ \vdots \\ \mathcal{G}_\ell \mathcal{I}_{\mathbb{P}} f({\mathbf{x}_{0,K}})\end{array}\right]
%=
\underbrace{\left[\begin{array}{ccc}\mathcal{G}_\ell p_{\mathbf{x}_0,\alpha(1)}(\mathbf{x}_{0,1}) & \cdots & \mathcal{G}_\ell p_{\mathbf{x}_0,\alpha(m)}(\mathbf{x}_{0,1}) \\ \vdots & \ddots & \vdots \\ \mathcal{G}_\ell p_{\mathbf{x}_0,\alpha(1)}(\mathbf{x}_{0,K}) & \cdots  & \mathcal{G}_\ell p_{\mathbf{x}_0,\alpha(m)}(\mathbf{x}_{0,K}) \end{array}\right] }_{\mathbf{B}_\ell}
\underbrace{\left[\begin{array}{c}b_{\alpha(1)} \\ \vdots \\ b_{\alpha(m)}\end{array}\right]}_{\mathbf{b}}
=\mathbf{B}_\ell(\boldsymbol{\Phi}^\top \boldsymbol{\Phi})^{-1}\boldsymbol{\Phi}^\top\mathbf{f}_{{\textbf{x}_0}},\label{eqn:G_ell} %\\
\EEA
%\BEA
% \left[\begin{array}{c}\mathcal{G}_\ell f(\underline{\mathbf{x}_1}) \\ \vdots \\ \mathcal{G}_\ell f(\underline{\mathbf{x}_K})\end{array}\right]\approx\mathbf{B}_\ell\underbrace{\left[\begin{array}{c}b_{1} \\ \vdots \\ b_{m}\end{array}\right]}_{\mathbf{b}}=\mathbf{B}_\ell(\boldsymbol{\Phi}^\top \boldsymbol{\Phi})^{-1}\boldsymbol{\Phi}^\top\mathbf{f}_{{\textbf{x}_0}},\label{eqn:G_ell2}
%\EEA
where $\mathbf{B}_\ell$ is a $K$ by $m$ matrix with $[\mathbf{B}_\ell]_{ij}=\mathcal{G}_\ell p_{{\textbf{x}_0},\alpha(j)}({\textbf{x}_{0,i}})$ and we have used $\boldsymbol{\Phi}$ as defined in (\ref{eqn:matrix_phi}) in the last equality. Hence, the differential matrix for the operator $\mathcal{G}_\ell$ over the stencil is approximated by the $K$ by $K$ matrix,
\BEA
\mathbf{G}_\ell:=\mathbf{B}_\ell(\boldsymbol{\Phi}^\top \boldsymbol{\Phi})^{-1}\boldsymbol{\Phi}^\top.
\label{eqn:diff-op}
\EEA
Then the Laplace-Beltrami operator can be approximated at  the base point ${\mathbf{x}_0}$ as,
\BEA
\Delta_Mf ({\mathbf{x}_0})=\sum_{\ell=1}^n\mathcal{G}_\ell\mathcal{G}_\ell f ({\mathbf{x}_0})\approx \sum_{\ell=1}^n\mathcal{G}_\ell \mathcal{I}( \mathbf{G}_\ell\textbf{f}_{{\textbf{x}_0}})({\mathbf{x}_0})\approx  \big(\sum_{\ell=1}^n \mathbf{G}_\ell \mathbf{G}_\ell \textbf{f}_{{\textbf{x}_0}}\big)_1, \label{eqn:GlG1}
\EEA
where subscript$-1$ is to denote the first element of the resulting $K$-dimensional vector. Denoting the elements of the first row of the $K$ by $K$ matrix $ \sum_{\ell=1}^n \mathbf{G}_\ell \mathbf{G}_\ell$ by $\{w_k\}_{k=1}^K$, we have identified the weights in \eqref{eqn:FD}, that is,
\BEA
\Delta_Mf ({\mathbf{x}_0}) \approx  \sum_{k=1}^Kw_kf({\textbf{x}_{0,k}}).
\label{eqn:approximation}
\EEA
{\color{black}Notice that these weights in \eqref{eqn:approximation} are prescribed in analogous to that in the classical finite-difference scheme on equal-
spaced points in the Euclidean domain. It is worth to note that this formulation is a version of GFDM introduced by \cite{suchde2019meshfree} where we use equal weights in the generalized moving least-squares (GMLS) fitting \cite{mirzaei2012generalized}, whereas the classical finite-difference scheme uses the local polynomial interpolation. We will refer to the estimate in \eqref{eqn:approximation} as the least-squares estimate and the weights $w_k$ as the least-squares weights, to distinguish with the GMLS approach in \cite{liang2013solving,gross2020meshfree} which is employed to estimate both metric tensors and functions. } The complete procedure is listed in Algorithm \ref{algo:local-LS}.

%Notice that these weights are prescribed in analogous to that in the classical finite-difference scheme on equal-spaced points in the Euclidean domain, except that the latter uses the local polynomial interpolation whereas the proposed scheme above uses the generalized moving least-squares approximation, GMLS \cite{mirzaei2012generalized}. The complete procedure is listed in Algorithm \ref{algo:local-LS}. {\color{blue} This is GFDM.}

\begin{algorithm}[ht]
	%\caption{Neighborhood-based Online adaptive algorithm for poroelasticity model}
	\caption{Intrinsic Polynomial Least Squares Approximation for the Laplace-Beltrami Operator}
	\begin{algorithmic}[1]
	\STATE {\bf Input:} A set of (distinct) nodes $X=\{\textbf{x}_i\}_{i=1}^N\subset M$, a positive constant $K(\ll N)$ nearest neighbors,  the degree of intrinsic polynomials $l$,  projection matrix $\mathbf{P}$ with each row denoted by $\mathbf{P}_\ell$, and orthonormal tangent vectors $\{\textbf{t}_{\mathbf{x}_i,j}\}_{j=1}^d$ at each point $\mathbf{x}_i$.
	\STATE Set $m=\left(\begin{matrix}l+d\\ d\end{matrix}\right)$. Set $L_X$ to be a sparse $N$ by $N$ matrix with $NK$ nonzeros.
	\FOR{ $i\in\{1,...,N\}$ }
		\STATE Find the $K$ nearest neighbors of the point $\textbf{x}_i$ in the stencil $S_{\textbf{x}_i}=\{{\textbf{x}_{i,k}}\}_{k=1}^K$.
		\STATE Construct the $K$ by $m$ matrix $\boldsymbol{\Phi}$ with
		$$
        \boldsymbol{\Phi}_{kj}=p_{{\textbf{x}_i},\alpha(j)}({\textbf{x}_{i,k}})=\prod_{r=1}^d \left[ \textbf{t}_{{\textbf{x}_i},r}\cdot(\textbf{x}_{i,k}-{\textbf{x}_{i}})\right]^{\alpha(j)_r}.
		%\boldsymbol{\Phi}_{kj}=p_{\underline{\textbf{x}},\alpha(j)}(\underline{\textbf{x}_k})=\prod_{i=1}^d \left[ \textbf{t}_{\underline{\textbf{x}},i}\cdot(\underline{\textbf{x}_k}-\underline{\textbf{x}})\right]^{\alpha(j)_i}.
		$$
		\STATE For $1\leq \ell\leq n$, construct the $K$ by $m$ matrix $\textbf{B}_\ell$ with
		$$
        [\mathbf{B}_\ell]_{kj}=\mathcal{G}_\ell p_{{\textbf{x}_i},\alpha(j)}({\textbf{x}_{i,k}})=\big(\mathbf{P}_{\ell} \cdot \overline{\textup{grad}}_{\mathbb{R}^n}\big) \left(p_{{\textbf{x}_i},\alpha(j)}({\textbf{x}_{i,k}})\right).
		%[\mathbf{B}_\ell]_{kj}=\mathcal{G}_\ell p_{\underline{\textbf{x}},\alpha(j)}(\underline{\textbf{x}_k})=\mathbf{P}^{\ell} \cdot \overline{\textup{grad}}_{\mathbb{R}^n}\left(p_{\underline{\textbf{x}},\alpha(j)}(\underline{\textbf{x}_k})\right).
		$$
		\STATE Set $\mathbf{G}_\ell:=\mathbf{B}_\ell(\boldsymbol{\Phi}^\top \boldsymbol{\Phi})^{-1}\boldsymbol{\Phi}^\top$ and $\tilde{L}:=\sum_{\ell=1}^n\mathbf{G}_\ell\mathbf{G}_\ell$.
		\STATE Extract the first row of the matrix $\tilde{L}$ {\color{black}to obtain the least-squares weights $w_k$} and arrange them into corresponding columns in the $i$th row of $L_X$.
	\ENDFOR
	\STATE {\bf Output:} The approximate operator matrix $L_X$.
	\end{algorithmic}\label{algo:local-LS}
\end{algorithm}

\begin{rem}\label{rem_error}
Theoretical error estimate for such the generalized moving least-squares approximation has been derived in \cite{mirzaei2012generalized}. Their error bound is described in terms of the fill distance. In our context, we define the fill distance with respect to geodesic distance, $d_g:M\times M \to \mathbb{R}$ as,
\BEA
h_{X,M} := \sup_{x\in M} \min_{j\in\{1,\ldots, N\}} d_g(\mathbf{x}, \mathbf{x}_j).\notag
\EEA
which is bounded below by the separation distance,
\BEA
q_{X,M} = \frac{1}{2} \min_{i\neq j} d_g(\mathbf{x}_i,\mathbf{x}_j).\notag
\EEA
Let $X\subseteq M$ %quasi-uniform with respect to a constant $c_q>0$, that is $q_{X,M} \leq h_{X,M}\leq c_{q} q_{X,M}$, and suppose also
be such that $h_{X,M} \leq h_0$ for some constant $h_0>0$. Define $M^* = \bigcup_{\mathbf{x}\in M} B(\mathbf{x},C_2h_0)$, a union of geodesic ball over the length $C_2h_0$, for some constant $C_2>0.$ Then for any $f \in C^{l+1}(M^*)$ and $\alpha$ that satisfies $|\alpha |\leq l$,
\BEA
\big|D^\alpha f (\mathbf{x}) - \widehat{D^\alpha f}(\mathbf{x}) \big|  \leq C h_{X,M}^{l+1-|\alpha|}\big|f\big|_{C^{l+1}(M^*)}, \label{MLS_error}
\EEA
for all $\mathbf{x}\in M$ and some $C>0$.
Here the semi-norm
\[
|f|_{C^{l+1}(M^*)} := \max_{|\beta| = l+1} \|D^\beta f\|_{L^\infty (M^*)},
\]
is defined over $M^*$. In the error bound above, we used the notation $\widehat{D^\alpha f}$ as the GMLS approximation to $D^\alpha f$ using local polynomials up to degree $l$, where $D^\alpha$ denotes a general multi-dimensional derivative with multiindex $\alpha$. For the approximation of the Laplace-Beltrami operator, we have
\BEA
\left| \Delta_M f(\mathbf{x}_i) - (L_X \mathbf{f})_i \right|
= \big| \Delta_M f(\mathbf{x}_i) - \sum _{k=1}^K w_k f(\mathbf{x}_{i,k}) \big|
= O(h_{X,M}^{l-1}).\label{errorrate}
\EEA
For uniformly sampled data $X\subset M$, one can show that with probability higher than $1-\frac{1}{N}$, $h_{X,M} = O(N^{-\frac{1}{d}})$ (see Lemma B.2 in \cite{harlim2022rbf}), so the error rate in terms of $N$ is of order-$N^{-\frac{l-1}{d}}$.
We will verify this error rate in the following numerical example. One can also show that if the $d$-dimensional Riemannian manifold $M$ has positive Ricci curvature at all points in $X$, then with probability higher than $1-\frac{1}{N}$, we have $c(d) N^{-\frac{2}{d}}\leq q_{X,M}$  for some constant $c(d)>0$ (see Lemma A.2 in \cite{yan2023spectral} for the detailed proof). With the upper bound on the fill distance, then with probability higher than $1-\frac{2}{N}$, we have
\BEA
c(d) N^{-\frac{2}{d}}\leq q_{X,M} \leq h_{X,M} \leq C(d)N^{-\frac{1}{d}}\label{filldist_bdd}
 \EEA
for some constant $C(d)>0$. This bound will be useful for the discussion for manifold with boundary. %This bound suggests that both the fill and separation distances should be at least of order-$N^{-\frac{2}{d}}$, which will be useful for the discussion for manifold with boundary.
\end{rem}

\begin{example}\label{ex1}
In Fig.~\ref{fig1_cons}(a), we illustrate the consistency of the discrete operator $L_X$ obtained from Algorithm \ref{algo:local-LS} on 1D ellipse embedded in $\mathbb{R}^2$ via the
following map,
\begin{equation}
\mathbf{x}:=\left( x^{1},x^{2}\right) =(\cos \theta ,2\sin \theta )\in M\subset
\mathbb{R}^{2}.  \label{eqn:ellpar}
\end{equation}%
The Riemannian metric is given by,
\begin{equation}
g\left( \theta \right) =\sin ^{2}\theta +4\cos ^{2}\theta ,\text{ \
\ for\ }0\leq \theta < 2\pi .  \label{Eqn:ellipse_metr}
\end{equation}%
The negative-definite Laplace-Beltrami operator acting on any smooth function $u$ is given in local
coordinates as,
\BEA
\Delta _{M}u:=\frac{1}{\sqrt{|g|}}\frac{\partial }{\partial \theta }\left(
\sqrt{\left\vert g\right\vert }g^{-1}\frac{\partial u}{\partial \theta }%
\right) .
\label{eqn:Lap_ellipse}
\EEA
\end{example}

In this numerical experiment, we test the consistency for the approximation of the Laplace-Beltrami operator acting on the function $u(\mathbf{x}(\theta))=\sin(\theta)$. The approximation is based on
point clouds $\left\{ \mathbf{x}_{i}\right\} _{i=1}^{N}$ that are randomly and uniformly distributed in the intrinsic coordinate $\theta \in  [0,2\pi )$.
We fix $K=21$ nearest neighbors for computational efficiency. Here,10 independent random samples $X$ are run for each $N=[800,1600,3200,6400,12800,25600]$ and we will examine the accuracy of the approximation with polynomials of degree $l=2,3,4,5$. The forward error (FE),
\BEA
\mathbf{FE}=\max\limits_{1\leq i\leq N}|\Delta_Mu(\mathbf{x}_i)-(L_X\mathbf{u})_i|,
\EEA
is defined to verify the error rate in \eqref{errorrate}, where $\mathbf{u} = \mathcal{R}_X u = \left(u(\mathbf{x}_1),\ldots,u(\mathbf{x}_N)\right)^\top$. \comment{\color{red}We should point out that since the data is random, the matrix $\boldsymbol{\Phi}^\top\boldsymbol{\Phi}$ is ill-conditioned. To deal with such singularity, we used ridge regression in the least squares approximation (\ref{eqn:ext_LS}). This is for extrinsic?}

In Fig.~\ref{fig1_cons}(a), we plot the errors with error bars obtained from the standard deviations from 10 independent randomly sampled training data $X$. We see that the $\mathbf{FE}$ of {\color{black}the least-squares} estimator with intrinsic polynomials converges with rates that agree with the theoretical error bound in the previous remark, namely of order-$N^{-(l-1)}$ for this one-dimensional example until they reach the round-off limit before they start to increase with rate $h_{X,M}^{-2}=O(N^{2})$. % where $s$ is characteristic the geodesic distance.
For instance, for polynomial of degree $l=5$, the minimal \textbf{FE} can be reached around $10^{-15}/h_{X,M}^{2}\approx 10^{-15}N^{2}\approx 10^{-7}$ when $N\approx 10^{4}$, which is the accuracy barrier  in double precision. The similar phenomenon, but for the Laplacian approximation in Euclidean space, is also observed in reference \cite{flyer2016role}. %{\color{red} reference on the round off limit?}.bayona2017role

In Fig.~\ref{fig1_cons}(b), we show the corresponding result obtained using extrinsic polynomial approximation \cite{flyer2016role}, where the only difference from the intrinsic polynomial approach is that the least-squares problem in \eqref{eqn:int_LS} is solved over the space of polynomials in the ambient space $\mathbb{R}^n$. Notice that for the same degree $l$, the extrinsic polynomial approximation uses more basis functions compared to the intrinsic one. For example, when degree $l=2$, six centered monomial basis functions, $\{1,x^{1}-x^{1}_0,x^{2}-x^{2}_0, (x^{1}-x^{1}_0) ^{2},(x^{1}-x^{1}_0)(x^{2}-x^{2}_0),( x^{2}-x^{2}_0) ^{2}\}$, are used for the extrinsic polynomials while only three centered monomial basis functions, $\{1,\mathbf{t}\cdot(\mathbf{x}-{\textbf{x}_0}),\left( \mathbf{t}\cdot (\mathbf{x}-{\textbf{x}_0})\right) ^{2}\}$, are used for the intrinsic polynomials. Here, $\mathbf{x}=(x^1,x^2)$ and the base point is $\mathbf{x}_0 = (x_0^1,x_0^2)$. We should point out that since the six extrinsic polynomials are linearly dependent on the 1D manifold and the data are randomly distributed, the interpolation matrix $\boldsymbol{\Phi}^\top\boldsymbol{\Phi}$ constructed from the extrinsic polynomial basis functions is ill-conditioned. To deal with such singularity in the design matrix of the extrinsic polynomials, we used ridge regression in the least squares approximation. On the other hand, for the intrinsic polynomials, such a singularity does not occur based on our numerical experiments. One can see from Fig.~\ref{fig1_cons}(b) that when $N$ is small, the forward error (\textbf{FE}) for extrinsic polynomials are relatively small compared to those for intrinsic polynomials in Fig.~\ref{fig1_cons}(a). However, no obvious consistency can be observed due to the linear dependence of extrinsic polynomials on manifolds (Fig.~\ref{fig1_cons}(b)). When $N$ reaches around $4000$, the \textbf{FE} for extrinsic polynomial reach the \textquotedblleft round-off limit\textquotedblright\ and start to increase on the order of $O(N^{2}).$

\begin{figure*}[htbp]
{\scriptsize \centering
\begin{tabular}{cc}
{\normalsize (a) intrinsic polynomials} & {\normalsize (b) extrinsic
polynomials} \\
\includegraphics[width=3
in, height=2.2 in]{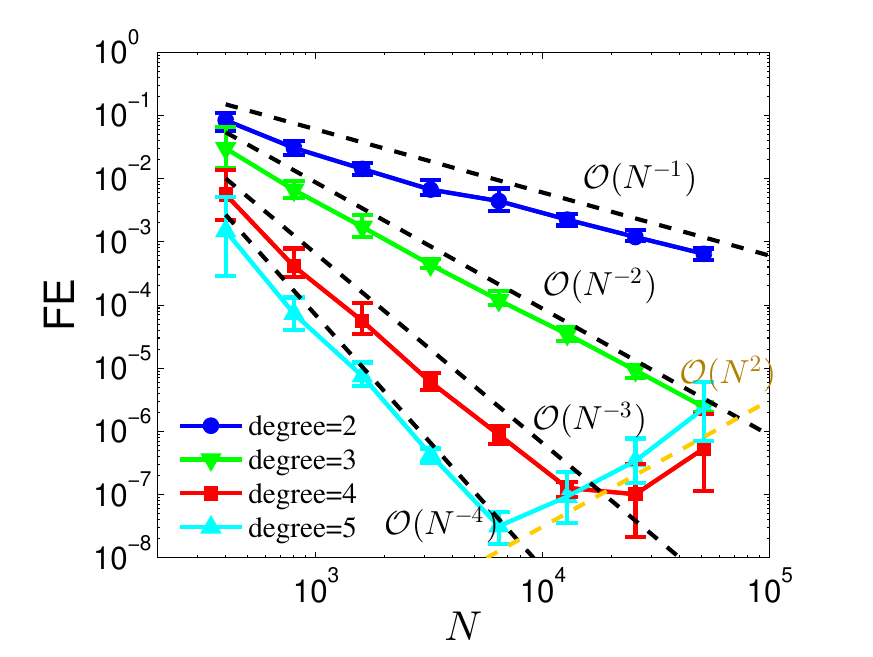} &
\includegraphics[width=3
in, height=2.2 in]{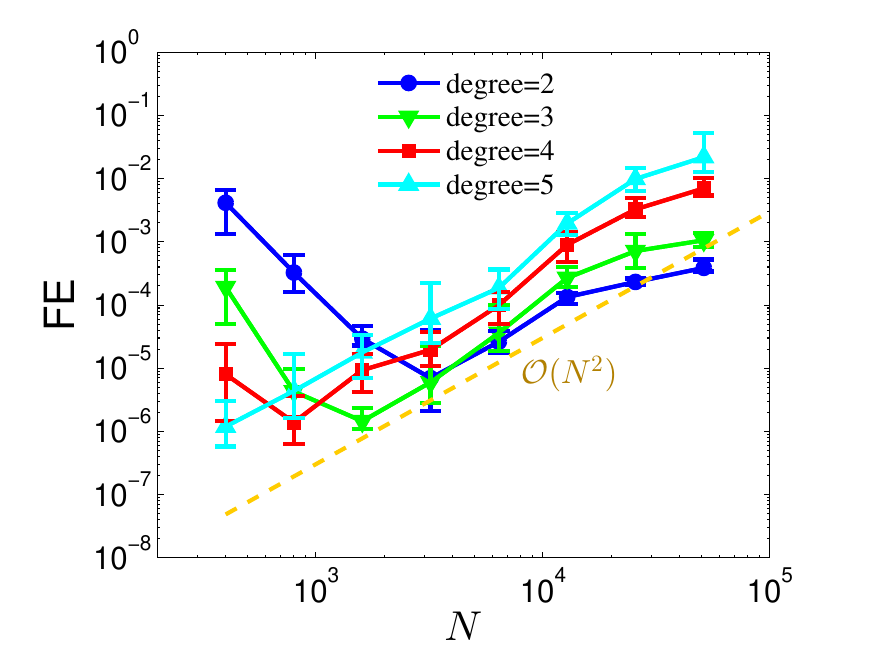} %
\end{tabular}
}
\caption{\textbf{1D ellipse in $\mathbb{R}^2$}. $K=21$ nearest neighbors are used. Comparison of
forward errors of the operator estimation using (a) intrinsic polynomials and
(b) extrinsic polynomials.  }
\label{fig1_cons}
\end{figure*}

\comment{\color{magenta} This section should not be here. We should just write about the intrinsic approximation.

\subsection{Extrinsic polynomial least squares approximation}
In this section, we consider using a least square approximation with extrinsic polynomials (in ambient space) which is common in scientific community \cite{flyer2016role}. For  $\ell=1,...,n$, let $x^\ell$ be each direction in $\mathbb{R}^n$. Let $P_l$ denote the space of $n$-dimension polynomials of degree up to $l$. The dimension of the space $P_l$ is $m=\left(\begin{matrix}l+n\\ n\end{matrix}\right)$. Let $I_{\underline{\textbf{x}},P_l}$ be an interpolation operator such that for any $\textbf{f}_{\underline{\textbf{x}}}\in\mathbb{R}^K$, $I_{\underline{\textbf{x}},P_l}\textbf{f}_{\underline{\textbf{x}}}\in P_l$ be the unique solution of the least squares problem:
 \BEA
\underset{p\in P_l}{\operatorname{min}}\sum_{k=1}^K\left(f(\underline{\textbf{x}_k})-p (\underline{\textbf{x}_k})\right)^2.
 \label{eqn:ext_LS}
 \EEA
Notice that the standard basis in $P_l$ is $\{\textbf{x}^\alpha\}_{|\alpha|\leq l}$  where  $\alpha=(\alpha_1,..,\alpha_n)$ is the multi-index notation and $\textbf{x}^\alpha=(x^1)^{\alpha_1}\cdots(x^n)^{\alpha_n}$. In the rest of the paper, we will abuse the notations of $j$ and $\alpha(j)$ since there is a bijection between the set of numbers $\{1,...,m\}$ and the set of multi-index with $|\alpha|\leq l$. Then the solution in (\ref{eqn:ext_LS}) could be written as
$$
\left(I_{\underline{\textbf{x}},P_l}\textbf{f}_{\underline{\textbf{x}}}\right)(\textbf{x})=\sum_{|\alpha|\leq l}b_\alpha \textbf{x}^\alpha
$$
where the coefficients vector $\textbf{b}=(b_1,...,b_m)^\top$ could be found by solving the normal equation
 \BEA
 (\boldsymbol{\Phi}^\top\boldsymbol{\Phi})\textbf{b}=\boldsymbol{\Phi}^\top\textbf{f}_{\underline{\textbf{x}}}.
 \EEA
Here, $\boldsymbol{\Phi}$ is a $K$ by $m$ matrix with
 \BEA
 \boldsymbol{\Phi}_{kj}=\underline{\textbf{x}_k}^{\alpha(j)},\quad 1\leq k\leq K, 1\leq j\leq m.
 \label{eqn:matrix_phi_ex}
 \EEA

Following the steps in Section \ref{Sec:diff_ambient}, we then can approximate any differential operators (related to gradient and divergence) of functions $f$ over the stencil $S_{\underline{\textbf{x}}}$ by differentiating the polynomial $\left(I_{\underline{\textbf{x}},P_l}\textbf{f}_{\underline{\textbf{x}}}\right)(\textbf{x})$. In particular, for $\ell=1,...,n$,
$$
\mathcal{G}_\ell f(\underline{\textbf{x}_k})\approx (\mathcal{G}_\ell (I_{\underline{\textbf{x}},P_l}\textbf{f}_{\underline{\textbf{x}}}))(\underline{\textbf{x}_k})=\sum_{|\alpha|\leq l}b_{\alpha}\mathcal{G}_\ell \underline{\textbf{x}_k}^\alpha,\quad \forall k=1,...,K.
$$
Above relations can be rewritten in matrix form as,
\BEA
\left[\begin{array}{c}\mathcal{G}_\ell f(\underline{\mathbf{x}_1}) \\ \vdots \\ \mathcal{G}_\ell f(\underline{\mathbf{x}_K})\end{array}\right]\approx\mathbf{B}_\ell\underbrace{\left[\begin{array}{c}b_{1} \\ \vdots \\ b_{m}\end{array}\right]}_{\mathbf{b}}=\mathbf{B}_\ell(\boldsymbol{\Phi}^\top \boldsymbol{\Phi})^{-1}\boldsymbol{\Phi}^\top\mathbf{f}_{\underline{\textbf{x}}},\label{eqn:B_ell_ex}
\EEA
where $\mathbf{B}_\ell$ is a $K$ by $m$ matrix with $[\mathbf{B}_\ell]_{kj}=\mathcal{G}_\ell \left(\underline{\textbf{x}_k}^{\alpha(j)}\right)$ and in the last equality  we have used $\boldsymbol{\Phi}$ as defined in (\ref{eqn:matrix_phi_ex}). Note that each component of $\mathbf{B}_\ell$ can be calculated explicitly by definition of $\mathcal{G}_\ell$ in (\ref{eqn:grad}). Then the differential matrix for the operator $\mathcal{G}_\ell$ over the stencil is given by the $K$ by $K$ matrix
\BEA
\mathbf{G}_\ell:=\mathbf{B}_\ell(\boldsymbol{\Phi}^\top \boldsymbol{\Phi})^{-1}\boldsymbol{\Phi}^\top.
\EEA
Then the Laplace-Beltrami operator can be approximated at  $\underline{\mathbf{x}}$ as,
\BEA
\Delta_Mf (\underline{\mathbf{x}})=\sum_{\ell=1}^n\mathcal{G}_\ell\mathcal{G}_\ell f (\underline{\mathbf{x}})\approx \sum_{\ell=1}^n\mathcal{G}_\ell [I_{\underline{\textbf{x}},P_l}( \mathbf{G}_\ell\textbf{f}_{\underline{\textbf{x}}})](\underline{\mathbf{x}})\approx  \big(\sum_{\ell=1}^n \mathbf{G}_\ell \mathbf{G}_\ell \textbf{f}_{\underline{\textbf{x}}}\big)_1.
\EEA
Consequently, we can choose the weights $\{w_k\}_{k=1}^K$ in (\ref{eqn:FD}) be the first row of the $K$ by $K$ matrix $ \big(\sum_{\ell=1}^n \mathbf{G}_\ell \mathbf{G}_\ell)$ such that
\BEA
\Delta_Mf (\underline{\mathbf{x}})\approx \sum_{k=1}^Kw_kf(\underline{\textbf{x}_k}).
\EEA
The complete procedure of the extrinsic polynomial least squares approximation is listed in Algorithm \ref{algo:Extr-LS}.

\begin{algorithm}[ht]
	\caption{Extrinsic Polynomial Least Squares Approximation for the Laplace-Beltrami Operator}
	\begin{algorithmic}[1]
	\STATE {\bf Input:} A set of (distinct) nodes $X=\{\textbf{x}_i\}_{i=1}^N\subset M$, a positive constant $K\ll N$, $l$ the degree of intrinsic polynomials,  projection matrix $\mathbf{P}$ with each row denoted by $\mathbf{P}^\ell$.
	\STATE Set $L_X$ to be an $N$ by $N$ matrix with zeros. Set $m=\left(\begin{matrix}l+n\\ n\end{matrix}\right)$.
	\FOR{ $i\in\{1,...,N\}$ }
		\STATE Find the K nearest neighbors of point $\textbf{x}_i$, the stencil $S_{\textbf{x}_i}$.
		\STATE Construct the $K$ by $m$ matrix $\boldsymbol{\Phi}$ with $\boldsymbol{\Phi}_{kj}=\underline{\textbf{x}_k}^{\alpha(j)}$.
		\STATE For $1\leq \ell\leq n$, construct $K$ by $m$ matrix $\textbf{B}_\ell$ with 		
		$$
		[\mathbf{B}_\ell]_{kj}=\mathcal{G}_\ell \left(\underline{\textbf{x}_k}^{\alpha(j)}\right)=\mathbf{P}^{\ell} \cdot \overline{\textup{grad}}_{\mathbb{R}^n}\left(\underline{\textbf{x}_k}^{\alpha(j)}\right).
		$$
		\STATE Set $\mathbf{G}_\ell:=\mathbf{B}_\ell(\boldsymbol{\Phi}^\top \boldsymbol{\Phi})^{-1}\boldsymbol{\Phi}^\top$ and $\tilde{L}=\sum_{\ell=1}^n\mathbf{G}_\ell\mathbf{G}_\ell$.
		\STATE Extract the first row of the matrix $\tilde{L}$ and arrange them into corresponding columns in the $i$th row of $L_X$.
	\ENDFOR
	\STATE {\bf Output:} The approximate operator $L_X$.
	\end{algorithmic}\label{algo:Extr-LS}
\end{algorithm}

}
\subsection{Stabilization based on optimization}\label{stabilization_section}

\comment{\color{red}To Qile and Shixiao, I don't think I understand the constraints optimization method in Hongkai Zhao's paper well. I am just confused about the consistency constraints: In our notation, their consistency constraints (or Eq. 3.14) looks to me like,
\[
\sum_{k=1}^K\hat{w}_k p_{\underline{\textbf{x}},\alpha}(\underline{\textbf{x}_k}) =  \Delta_M p_{\underline{\textbf{x}},\alpha} (\underline{\textbf{x}}),
\]
but I am not sure. In any case, this section below needs to be rewritten. Maybe we discuss the quadratic optimization approach in Hongkai Zhao's paper first. The approach below is to fit to the GMLS estimator with additional constraints.
}

{\bf{A quadratic optimization approach}.}\label{Sec:quadg_opt} While we have the consistency for the operator estimation using the intrinsic polynomial least squares, unfortunately, the resulting discrete operator $L_X$ is not stable when sample points are random on manifolds. As an example, let us examine the stability of the numerical Example~\ref{ex1}. In Fig.~\ref{fig1_stab2}(c), one can see that the infinity norm of the $L_{X,I}^{-1}$ grows dramatically as a function of $N$. Here we investigate the norm of $L_{X,I}^{-1}:=(I-L_{X})^{-1}$ instead of $L_{X}^{-1}$ since $L_{X}$ involves one zero eigenvalue on manifolds without boundaries. This instability issue, which will be problematic in the application of numerical PDEs, has been pointed out in a related work \cite{liang2013solving}. Motivated by the fact that the GMLS is equivalent to a constraint quadratic programming (see e.g. \cite{mirzaei2012generalized}), Liang and Zhao \cite{liang2013solving} additionally impose a diagonally-dominant constraint to the resulting approximation to ensure the stability of the resulting approximation. In our context, the proposed approach is to solve
\BEA
\min\limits_{\hat{w}_1,...,\hat{w}_k} \sum_{k=1}^K\hat{w}_k^2
\label{eqn:quad_optm}
\EEA
for $\{\hat{w}_i\}_{i=1,\ldots, K}$ with constraints
\BEA
\begin{cases}
\sum_{k=1}^K\hat{w}_kp_{\textbf{x}_0,\alpha}(\textbf{x}_{0,k})= \sum_{k=1}^Kw_kp_{\textbf{x}_0,\alpha}(\textbf{x}_{0,k})
\approx \Delta_M p_{\textbf{x}_0,\alpha} (\textbf{x}_0),\ \ |\alpha|\leq l, \\
\hat{w}_1<0, \hat{w}_k\geq 0,k=2,...,K,
\end{cases}\label{eqn:quad_const}
\EEA
for each base point $\mathbf{x}_0 \in X$.
Here, the first constraint in (\ref{eqn:quad_const}) guarantees that the new weights $\hat{w}_k$ are chosen such that the estimate is consistent to {\color{black}the least-squares} estimate at least for the intrinsic polynomials of degree up to $l$. Since the first basis polynomial $p_{\textbf{x}_0,\mathbf{0}}$ is constant $1$, then $\sum_{k=1}^K\hat{w}_k=\sum_{k=1}^Kw_k=\Delta_g 1=0$. With this equality and the second constraint in (\ref{eqn:quad_const}), the resulting weight is diagonal dominant, i.e., $|\hat{w}_1|\geq\sum_{k=2}^K|\hat{w}_k|$. Incidentally, we should point out that the slight difference between the quadratic optimization in (\ref{eqn:quad_optm})-(\ref{eqn:quad_const}) and the approach in  \cite{liang2013solving} is that they used a different approximation of the differential operator. Specifically, they used GMLS for the local quadratic approximation of manifolds and then calculate $\Delta_M p_{\textbf{x}_0,\alpha} (\textbf{x}_0)$ which involves the derivative of manifold metrics \cite{liang2013solving}. Here, in our context (\ref{eqn:quad_optm})-(\ref{eqn:quad_const}), we only need the pointwise information of the tangent space in order to compute the weights $\{w_k\}_{k=1}^K$ for the Laplacian approximation using the formula in \eqref{eqn:GlG1}.
% $\sum_{k=1}^Kw_kp_{\textbf{x}_0,\alpha}(\textbf{x}_{0,k})$

In the numerical Example~\ref{ex1}, we tested this approach using the MATLAB built-in function
%{\asciifamily quadprog.m}
{\texttt quadprog.m}
to solve the quadratic optimization (\ref{eqn:quad_optm})-(\ref{eqn:quad_const}). For polynomials with $l=2$ and $l=3$, one can compare {\color{black}the least-squares weights} $w_k$ with those obtained by the quadratic programming as shown in Fig.~\ref{fig1_stab1}(a)-(b). One can see that {\color{black}the least-squares} weights are almost uniformly arranged within a certain range (magenta). From Fig.~\ref{fig1_stab1}(a)-(b), one can see that after quadratic programming, the weights increase dramatically for the base point ${\textbf{x}_0}$ and also its two or three neighbors. Besides, most weights become zeros when degree $l=2$ (blue curves in Fig.~\ref{fig1_stab1}(a)) and nonzero weights slightly increase when $l=3$
(blue curves in Fig.~\ref{fig1_stab1}(b)). Notice that the discrete Laplacian matrix is diagonally dominant (DD) after quadratic programming. However, the quadratic programming fails to find the solutions for higher-order polynomials due to the empty feasible set. This observation is analogous to that for the finite-difference approximation on 1D uniform grids, that is, for higher-order approximation of the Laplacian, the weights cannot be diagonally dominant but have to show oscillatory pattern with both positive and negative values for two-sided weights (see e.g. Table 4.1 in \cite{gustafsson2007high}).

%In this section, we will introduce two optimization approach to make the matrix $L_X$ stable. In the following,  we fix a point $\underline{\textbf{x}}$ with its stencil $S_{\underline{\textbf{x}}}=\{\underline{\textbf{x}_k}\}_{k=1}^K$ and denote $\{w_k\}_{k=1}^{K}$ the weights calculated from intrinsic polynomial least squares with degree up to $l$.

\begin{figure*}[htbp]
{\scriptsize \centering
\begin{tabular}{cc}
{\normalsize (a) $l=2$} & {\normalsize (b) $l=3$}  \\
\includegraphics[width=2.8
in, height=2 in]{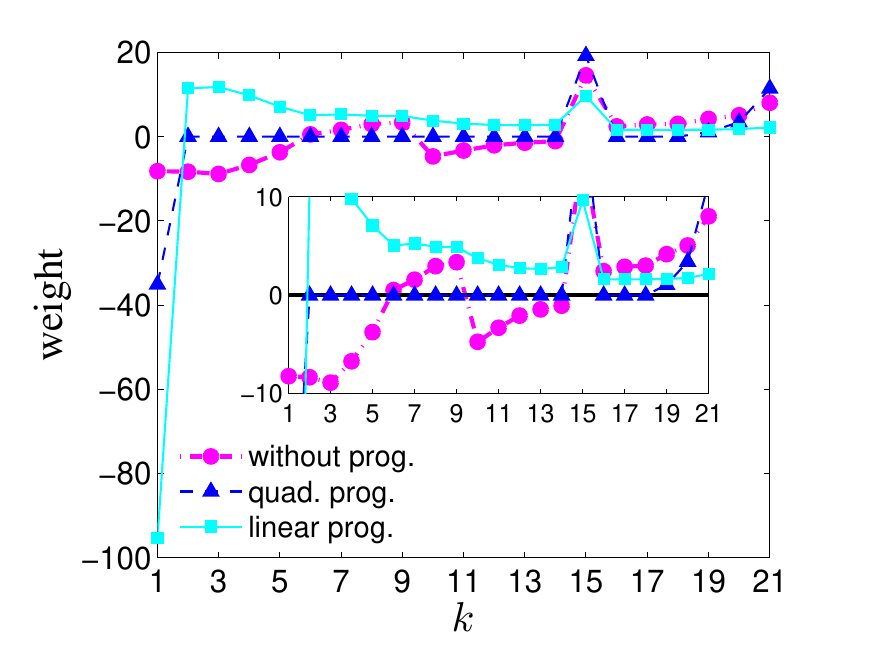} &
\includegraphics[width=2.8
in, height=2 in]{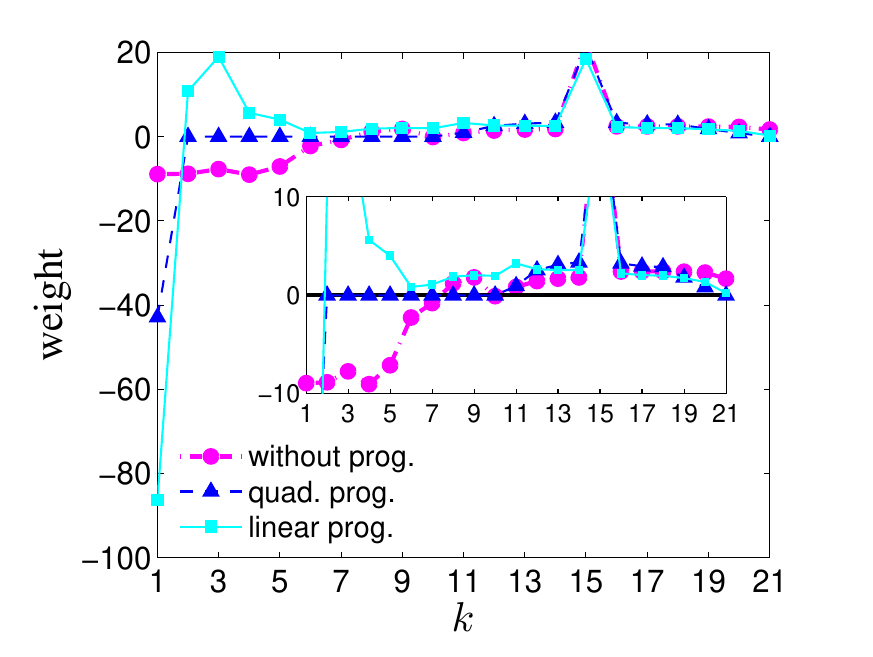} \\
{\normalsize (c) $l=4$ \& $l=5$} & {\normalsize (d) object function $C$} \\
\includegraphics[width=2.8
in, height=2 in]{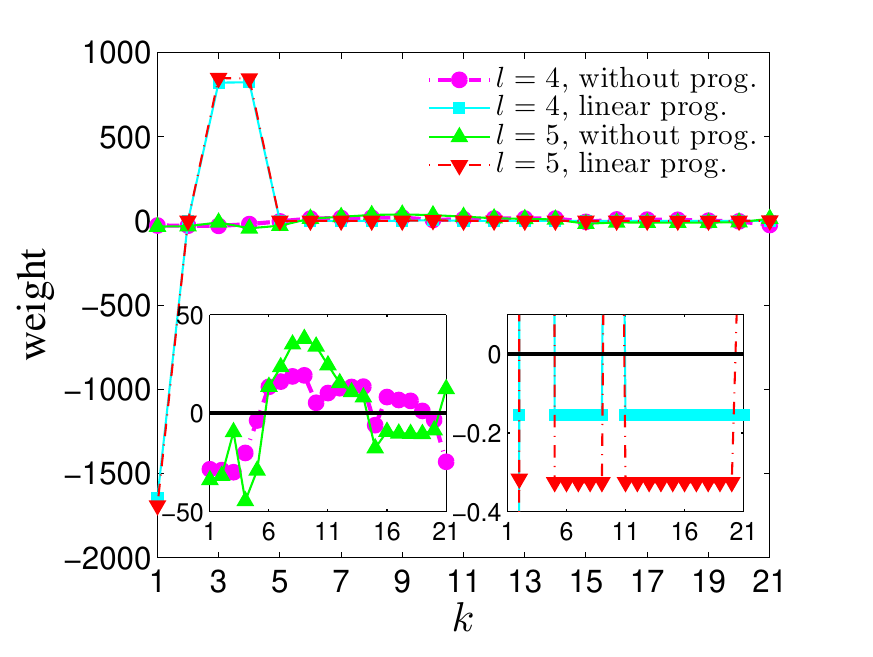} &
\includegraphics[width=2.8
in, height=2 in]{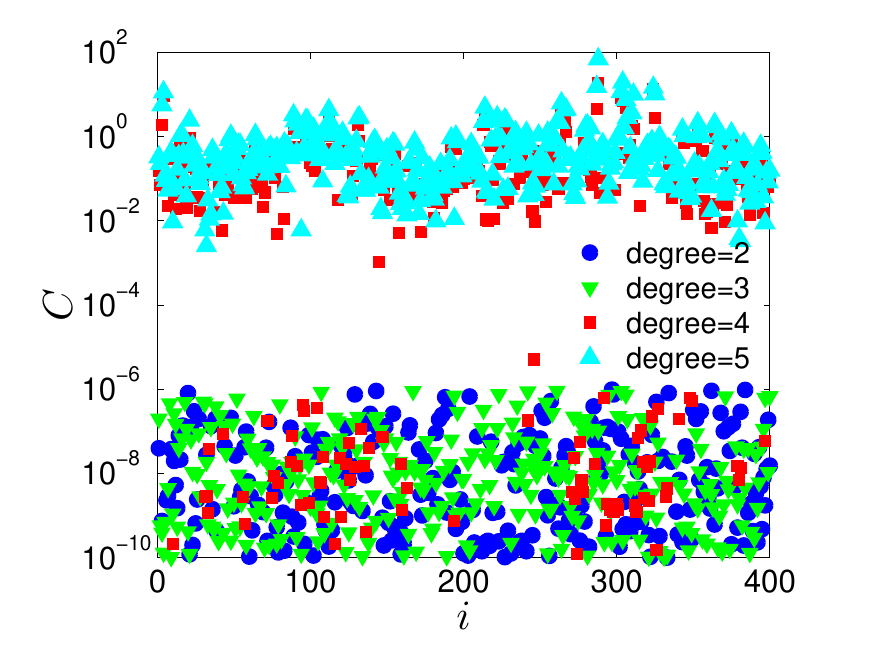} %
\end{tabular}
}
\caption{\textbf{1D ellipse in $\mathbb{R}^2$}. $N=400$. $K=21$ nearest
neighbors are used. Comparison of
weights among without optimization, quadratic programming, and linear programming for (a) $l=2$, (b) $l=3$, and
(c) $l=4,5$ at one point. The horizontal axis corresponds to the $k$-nearest neighbors ordered from the closest point to the farthest point. (d) The objective function values $C$ at each point $\{\textbf{x}_i\}_{i=1}^N$ for all degrees.}
\label{fig1_stab1}
%{\color{red}Change legend in panel (c) ``without prog" to ``GMLS". Change $\mathbf{L}^{-1}$ in panels (e) and (f) to $L_X^{-1}$ and $\hat{L}_X^{-1}$, respectively.}
\end{figure*}

\begin{figure*}[htbp]
{\scriptsize \centering
\begin{tabular}{ccc}
{\normalsize (a) FE comparison} & {\normalsize (b) stability for linear prog.} &
{\normalsize (c) instability for w/o prog.} \\
\includegraphics[width=2.
in, height=1.5 in]{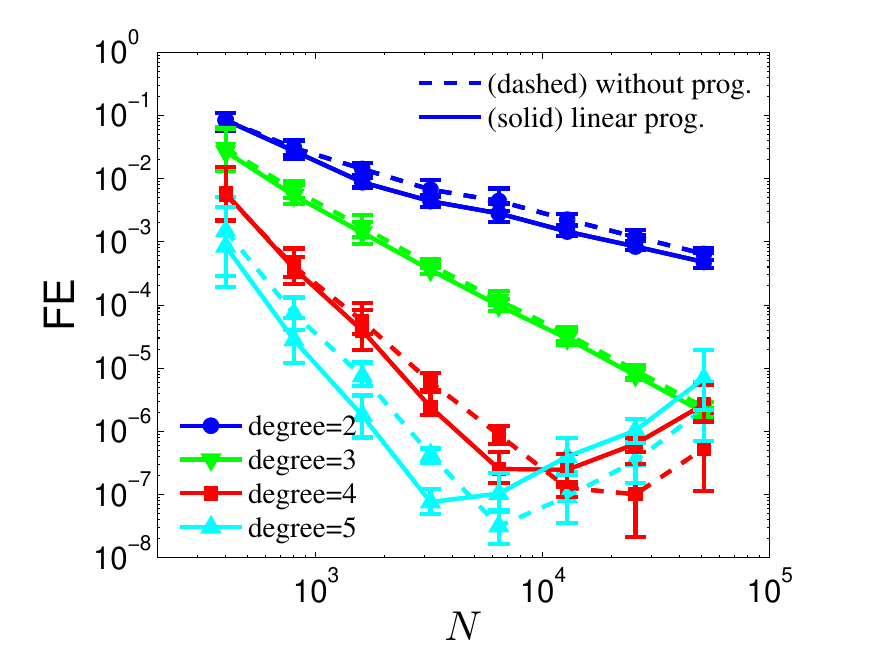} &
\includegraphics[width=2.
in, height=1.5 in]{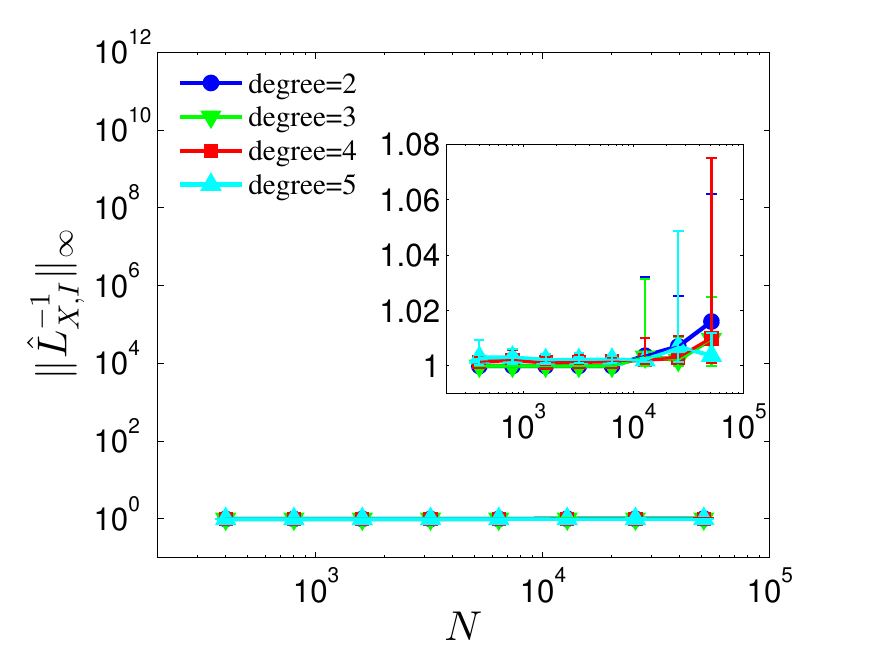} &
\includegraphics[width=2.
in, height=1.52 in]{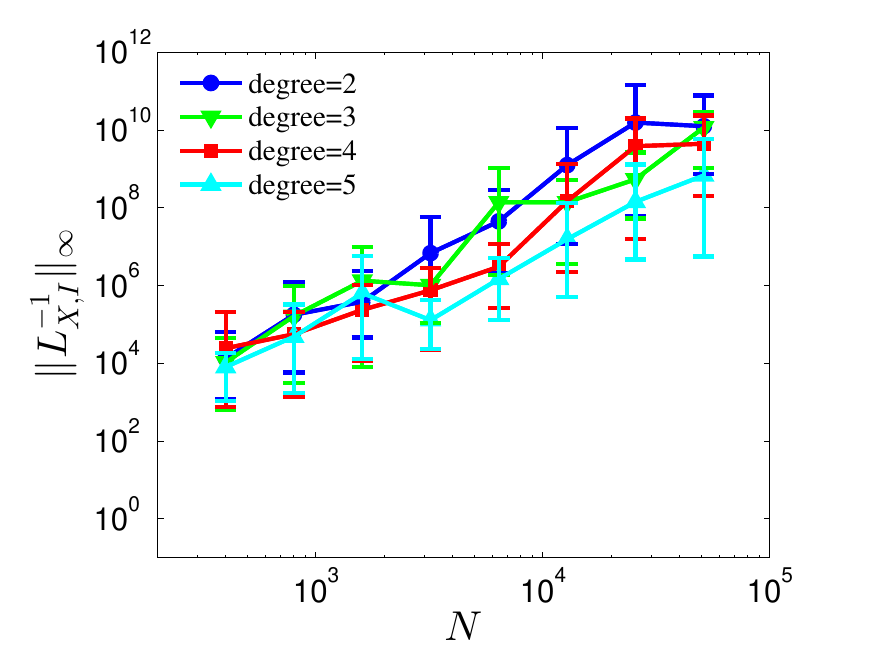}%
\end{tabular}
}
\caption{\textbf{1D ellipse in $\mathbb{R}^2$}. $K=21$ nearest
neighbors are used. (a) Comparison of
FEs between without optimization and linear programming. Comparison of the stability between (b) $\Vert \hat{L}_{X,I}^{-1}\Vert_{\infty}$ for linear programming and (c) $\Vert {L}_{X,I}^{-1}\Vert_{\infty}$ for without optimization.}
\label{fig1_stab2}
%{\color{red}Change legend in panel (c) ``without prog" to ``GMLS". Change $\mathbf{L}^{-1}$ in panels (e) and (f) to $L_X^{-1}$ and $\hat{L}_X^{-1}$, respectively.}
\end{figure*}

\comment{
\subsubsection{A quadratic optimization approach}

Now, we seek for weights $\{\hat{w}_k\}_{k=1}^K$ such that $\sum_{i=1}^K\hat{w}_kf(\underline{\textbf{x}_k})$ is a good approximation of $\Delta_gf(\underline{\textbf{x}})$ and meanthile the resulting matrix is diagonal dominant.  In this end, we apply a similar approach in Section 3.4 of \cite{liang2013solving} to add constraints on the weights to make the matrix stable. In particular, we consider the following quadratic optimization problem: find the weights $\{\hat{w}_k\}_{k=1}^K$ minimizing
\BEA
\min\limits_{\hat{w}_1,...,\hat{w}_k} \sum_{k=1}^K\hat{w}_k^2
\label{eqn:quad_optm}
\EEA
with constraints
\BEA
\begin{cases}
\sum_{k=1}^K\hat{w}_kp_{\underline{\textbf{x}},\alpha}(\underline{\textbf{x}_k})=\sum_{k=1}^Kw_kp_{\underline{\textbf{x}},\alpha}(\underline{\textbf{x}_k}),\ \ |\alpha|\leq l, \\
\hat{w}_1<0, \hat{w}_k\geq 0,k=2,...,K.
\end{cases}\label{eqn:quad_const}
\EEA
Here, the first constraint in (\ref{eqn:quad_const}) guarantees the new weights $\hat{w}_k$ is as consistency as the old one $w_k$ at least for intrinsic polynomial of degree up to $l$. Since the first basis polynomial $p_{\underline{\textbf{x}},0}$ is constant $1$, then $\sum_{k=1}^K\hat{w}_k=\sum_{k=1}^Kw_k=\Delta_g 1=0$. With this equality and the second constraint in (\ref{eqn:quad_const}), the resulting weight is diagonal dominant, i.e., $|\hat{w}_1|\geq\sum_{k=2}^K|\hat{w}_k|$. Numerically, we use the quadprog MATLAB built-in function to solve the quadratic optimization (\ref{eqn:quad_optm})-(\ref{eqn:quad_const}). {\color{red} The existence of the solution is guaranteed by ?}.

One can see from Fig.~\ref{fig1_stab}(a)-(c) that the weights
without optimization are uniformly arranged within a certain range (magenta
and green curves). From Fig.~\ref{fig1_stab}(a)-(b), one can see that after quadratic
programming, the weights increase dramatically for the center point $\underline{\textbf{x}}$ and also its two or three neighbors. Besides, most weights become zeros when degree $l=2$ (blue curves in Fig.~\ref{fig1_stab}(a)) and nonzero weights slightly increase when $l=3$
(blue curves in Fig.~\ref{fig1_stab}(b)). Notice that the discrete Laplacian matrix
is diagonally dominant (DD) after quadratic programming. However, if we want to apply with degree $l=4$
and $l=5$, the quadratic programming fails to find the solutions due to the empty feasible set.
}

%\subsubsection{A linear optimization approach}\label{Sec:linear_opt}
%As mentioned in the previous section, feasible solution of the quadratic optimization (\ref{eqn:quad_optm})-(\ref{eqn:quad_const}) could not be found numerically when higher order polynomials were used in approximation.

{\bf{A linear optimization approach}.}\label{Sec:linear_opt} To overcome the issue with the quadratic programming, we propose to relax the second diagonally-dominant constraint in (\ref{eqn:quad_const}). Specifically, instead of requiring $\hat{w}_k\geq 0$ for $k=2,...K$ in (\ref{eqn:quad_const}), we allow these coefficients to have a negative lower bound, i.e., there is a non-negative number $C\geq 0$ such that $w_k\geq -C,\, \forall\, k=2,...,K$. Since we hope the non-negative $C$ to be close to $0$, we set the objective function to be $C$. Consequently, we consider the following linear optimization problem:
\BEA
\min\limits_{C,\hat{w}_1,...,\hat{w}_k}C
\label{eqn:linear_optm}
\EEA
with constraints
\BEA
\begin{cases}
\sum_{k=1}^K\hat{w}_kp_{\textbf{x}_0,\alpha}(\textbf{x}_{0,k})= \sum_{k=1}^Kw_kp_{\textbf{x}_0,\alpha}(\textbf{x}_{0,k}),\ \ |\alpha|\leq l,\\
\hat{w}_1<0,\\
\hat{w}_k+C\geq 0,k=2,...,K,\\
0\leq C\leq \big\vert\min\limits_{k=2,...,K}w_k\big\vert,
\end{cases}\label{eqn:linear_const}
\EEA
where the last constraint is added to guarantee an existence of the solution under a mild assumption as stated below.

\begin{prop}\label{exist_lp}
If $w_1 <0$, then the linear optimization problem in \eqref{eqn:linear_optm}-\eqref{eqn:linear_const} has a solution.
\end{prop}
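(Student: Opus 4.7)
The plan is to reduce the claim to the fundamental theorem of linear programming: once the feasible set is non-empty and the objective is bounded from below, a minimizer exists. The hypothesis $w_1 < 0$ enters precisely in the feasibility check.

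First, I would exhibit a feasible point by taking $\hat{w}_k = w_k$ for all $k = 1, \ldots, K$ together with $C = C_0 := \max\{0,\, -\min_{k \geq 2} w_k\}$. Then the moment equalities in the first line of~\eqref{eqn:linear_const} hold trivially; the strict inequality $\hat{w}_1 < 0$ is exactly the hypothesis $w_1 < 0$; for each $k \geq 2$ the bound $\hat{w}_k + C_0 \geq 0$ follows by construction (separately in the cases $\min_{k\geq 2} w_k \geq 0$, where $C_0 = 0$ and $w_k \geq 0$, and $\min_{k\geq 2} w_k < 0$, where $C_0 = -\min_{k \geq 2} w_k$ forces $w_k + C_0 \geq 0$); and $0 \leq C_0 \leq |\min_{k\geq 2} w_k|$ is immediate. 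Since the last constraint forces $C \geq 0$, the LP has a finite infimum $C^* \geq 0$.

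For attainment, I would temporarily replace the strict inequality $\hat{w}_1 < 0$ by $\hat{w}_1 \leq 0$. The relaxed LP has a non-empty closed polyhedral feasible set and a linear objective bounded below, so the fundamental theorem of linear programming supplies a minimizer $(\hat{w}^*, C^*)$. If $\hat{w}_1^* < 0$, this already solves the original LP. Otherwise $\hat{w}_1^* = 0$; in that case the convex combination $(1-\lambda)(\hat{w}^*, C^*) + \lambda (w, C_0)$ with small $\lambda \in (0,1)$ remains feasible, has first coordinate $\lambda w_1 < 0$ by hypothesis, and has objective value $(1-\lambda) C^* + \lambda C_0 \to C^*$ as $\lambda \to 0^+$. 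A standard face argument applied to the minimizing face of the relaxed LP then produces a strictly feasible minimizer.

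The main obstacle is the strict inequality $\hat{w}_1 < 0$, which formally prevents the feasible set from being closed; everything else is routine LP bookkeeping. The leverage against this obstacle is that the explicit feasible point constructed above already satisfies the strict version of the inequality by hypothesis, so any degenerate minimizer of the relaxed LP can be perturbed toward $(w, C_0)$ without changing the optimal value, thereby recovering a minimizer of the original LP.
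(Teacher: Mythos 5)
The core of your argument is exactly the paper's: exhibit the feasible point $\hat{w}_k=w_k$, $C=|\min_{k\geq 2}w_k|$ (your $C_0=\max\{0,-\min_{k\geq2}w_k\}$ is an equivalent choice for feasibility), observe that $w_1<0$ supplies the strict constraint, and conclude from boundedness of $C$ that the infimum is finite. Where you go beyond the paper is in trying to handle attainment in the presence of the open constraint $\hat{w}_1<0$, and this extra step is the one place your argument does not close. If every minimizer of the relaxed LP (with $\hat{w}_1\leq 0$) lies on the hyperplane $\hat{w}_1=0$, then your convex combination $(1-\lambda)(\hat{w}^*,C^*)+\lambda(w,C_0)$ is strictly feasible but has objective value $(1-\lambda)C^*+\lambda C_0$, which exceeds $C^*$ for every $\lambda>0$ unless $C_0=C^*$; so the perturbation only shows that the infimum of the original problem equals $C^*$, not that it is attained, and the ``standard face argument'' cannot manufacture a strictly feasible minimizer when the optimal face is entirely contained in $\{\hat{w}_1=0\}$. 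In that degenerate situation the LP as literally written has no solution. To be fair, the paper's own proof simply asserts that a minimum exists from nonemptiness, convexity, and boundedness of $C$, which suffers from the identical issue; a clean fix for either version is to replace $\hat{w}_1<0$ by $\hat{w}_1\leq w_1/2$ (or any closed constraint that your feasible point satisfies), after which the feasible set is a nonempty closed polyhedron with bounded objective and the fundamental theorem of linear programming applies directly.
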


\begin{proof}
First, let us show that the feasible set is nonempty by examining that $\hat{w}_k = w_k$ and $C = |\min_{k=2,\ldots,K} w_k| \geq 0$ satisfy all the constraints. By the assumption, the first two constraints in \eqref{eqn:linear_const} are obviously satisfied. Let's check the third constraint. If $w_i \geq 0$ for any $i = 2,\ldots, K$, then of course $w_i+C \geq 0$. If $w_i< 0$ for some $i$, then $w_i + C = w_i + |\min_k w_k| =w_i + \max_k (-w_k) \geq 0$. Since the feasible set is nonempty and convex, also notice that $C$ is bounded, thus there exists a minimum value for $C$.
\end{proof}

In our implementation, we used the MATLAB built-in function
{\texttt{linprog.m}}
%{\asciifamily linprog.m}
to solve the linear optimization \eqref{eqn:linear_optm}-\eqref{eqn:linear_const}.
We fixed $K=21$ in this Example~\ref{ex1}.
From Fig.~\ref{fig1_stab1}(a)-(c), one can see that after linear programming, the weights also increase dramatically for the center point ${\textbf{x}_0}$ as in the case of using the quadratic programming. When the degree $l=2$ and $l=3$ (cyan curves in Fig.~\ref{fig1_stab1}(a)(b)), all weights are nonnegative except for the center point.
Fig.~\ref{fig1_stab1}(d) shows that the values of $C$ are almost all zero (within a tolerance of $10^{-6}$) for all the data points (see the blue and green dots in Fig.~\ref{fig1_stab1}(d)) when $l=2$ and $l=3$. And thus, the discrete Laplacian matrix is diagonally dominant.
When degree $l$ increases to $4$ and $5$, many negative weights can be observed (see the cyan and red curves in Fig.~\ref{fig1_stab1}(c)). The values of $C$ are not zeros at various data points (see cyan and red dots in Fig.~\ref{fig1_stab1}(d)). This suggests that the discrete Laplacian matrix is not diagonally dominant.

%Notice that the solution to \eqref{eqn:linear_optm}-\eqref{eqn:linear_const} is not unique for weights $\{\hat{w}_k\}_{k=1,\ldots,K}$. {\color{red} For $l=2$ and $l=3$, the weights for the linear programming can be more sparse as those obtained from the previous quadratic programming.} {\color{blue} I do not understand the previous sentence.} In this experiment, we reported the solution obtained from the MATLAB built-in linear programming function and numerically found that the linear programming can always stabilize the Laplacian matrix as will be shown below in Fig.~\ref{fig1_stab2}.  In Example~\ref{ex1}, we fixed $K=21$ in the convergence examination. The Laplacian matrix is already sparse enough even when all 21 weights in each row are nonzero after linear programming.

In the remainder of this paper, we will denote the resulting matrix where each row corresponds to the weights $\hat{w}_k$ obtained from solving the linear programming as $\hat{L}_X$.
Note that in linear programming, consistency constraints are imposed only for our intrinsic polynomial basis functions $p_{\mathbf{x}_0,\alpha} \in \mathbb{P}_{\mathbf{x}_0}^{l,d}$. In Fig.~\ref{fig1_stab2}(a), one can
observe that the \textbf{FE} using linear programming become slightly smaller than
those without optimization (resulting from {\color{black}the least squares} alone) for Laplacian matrices acting on our test
function $u(\mathbf{x}(\theta))=\sin (\theta)$. From Fig.~\ref{fig1_stab2}(b), one can
see that the infinity norm of the inverse of the discrete Laplacian matrices obtained from linear programming, $\Vert \hat{L}_{X,I}^{-1}\Vert _{\infty }$, is nearly constant, which reflects a stable approximation. This empirical results show that the linear programming can stabilize the Laplacian matrix for $l=2 \sim 5$ whereas quadratic programming can only work in practice for $l=2,3$.

\begin{rem}
Diagonal dominance (or equivalently $C=0$) is only a sufficient condition for the matrix $\hat{L}_{X,I}$ to be stable. In the following Theorem~\ref{thm_closed}, we will prove the convergence under the assumption of $C=0$. When $C \neq 0$, we have no proof yet for the matrix stability. Now we  only numerically found that when $l=4,5$, the Laplacian matrices become stable after the linear optimization which gives $C \neq 0$. The basic idea behind the linear optimization is to make the negative non-diagonal weights as close to zero as possible, and meanwhile make absolute values of diagonal weights as large as possible, under the consistency constraints.
\end{rem}

\begin{rem}\label{rem_w1}
We should point out that the assumption of $w_1<0$ in Proposition~\ref{exist_lp} is only a sufficient (not a necessary) condition for the existence of a feasible set. Empirically, we found that in some cases the solution can be numerically estimated even when $w_1 \geq 0$. This is motivated by our numerical finding that $w_1<0$ can be attained for an appropriate choice of $K>0$ (nearest neighbor parameter) on interior points. In Fig.~\ref{dist_w1}(a)-(b), we label the data points uniformly distributed on a 2-dimensional semi-torus embedded in $\mathbb{R}^3$ (see details of the semi-torus in Section~\ref{sec5.3}) corresponding to $w_1<0$ in green color and $w_1 \geq 0$ in blue color (the figure is depicted in the intrinsic coordinates and the numerical example is for $N=3200$). For points near the boundary, $w_1\geq0$ for various choices of $K$. In panel (c), we also show the maximum distance to the boundary of points with $w_1\geq0$ (dashes), which suggests that these points are close to the boundary for $K\approx 20-50$. If we increase $K$, while $w_1\geq0$ persist on these points, we find that there are more points near the boundary having $w_1\geq0$. Moreover, the forward errors (FEs) increase as {\color{black}the least squares approximation} becomes less local when $K$ increases beyond 50.
\end{rem}

\subsection{Identification of points near boundary}\label{identify_boundary}

A further inspection suggests that $w_1>0$ holds when $K$-nearest-neighbor points are not equally distributed around the base point (or are only located on one side of the base point), which is highly likely to hold for points whose distance to the boundary is smaller than the separation distance or when $K$ is too small or too large. This observation is analogous to the positive weight $w_1>0$ induced by a one-sided finite-difference approximation of the second-order derivative (see e.g. Table 4.2 in \cite{gustafsson2007high}). While the positive weight of the classical finite-difference formulation can be explicitly verified by Taylor's expansion formula, it remains complicated to achieve the same result in our context since the specification of $w_1$ involves randomly sampled data, metric tensor of the manifold, $K$-nearest neighbors algorithm, and the local least-squares method.
Nevertheless, this empirical observation suggests that one can use the sign of $w_1$ with an appropriate choice of $K$ that gives accurate approximation (small FE) to detect points that are close to the boundary. It is important to point out that this way of detecting points close to the boundary does not need the information of where the boundary is, which is useful in practice since we sometimes have no sample points on the boundary as it is a measure zero set. In the following, we will employ this point detection strategy in solving boundary-value problems of PDEs when no boundary points are given on the boundary and even no location of the boundary is given (or equivalently only a point cloud is given).

\begin{figure*}[htbp]
{\scriptsize \centering
\begin{tabular}{ccc}
{\normalsize (a) $l=2,K=15$} & { \normalsize (b) $l=2, K=51$ } &{ \normalsize (c) distance \& FE}
\\
\includegraphics[width=0.31\linewidth]{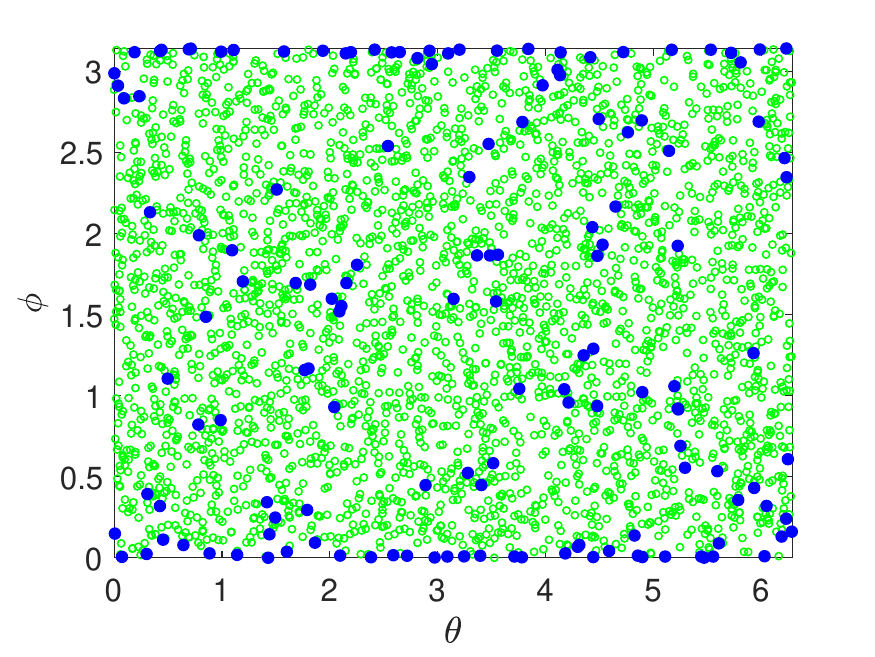} &
\includegraphics[width=0.31\linewidth]{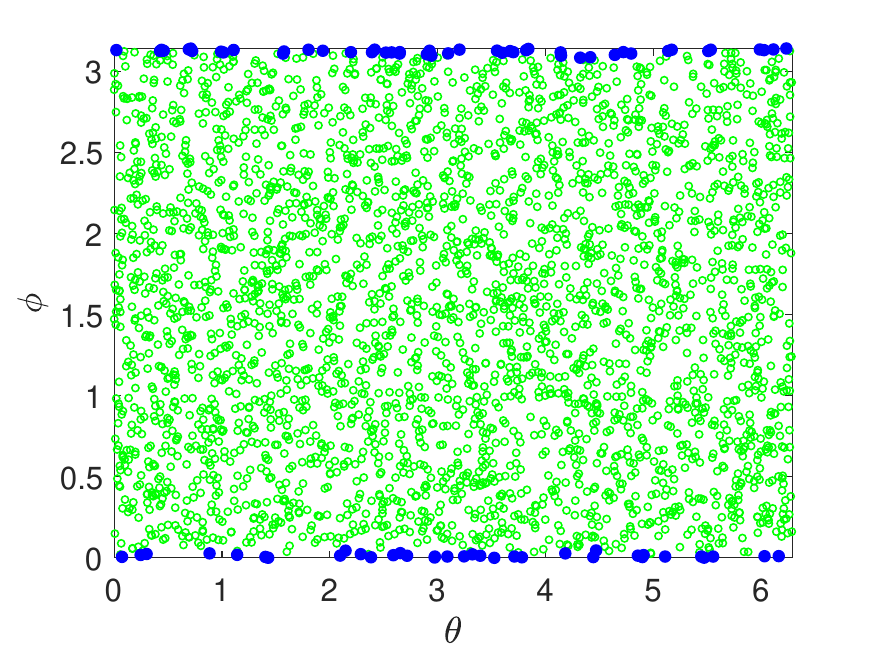}&
\includegraphics[width=0.31\linewidth]{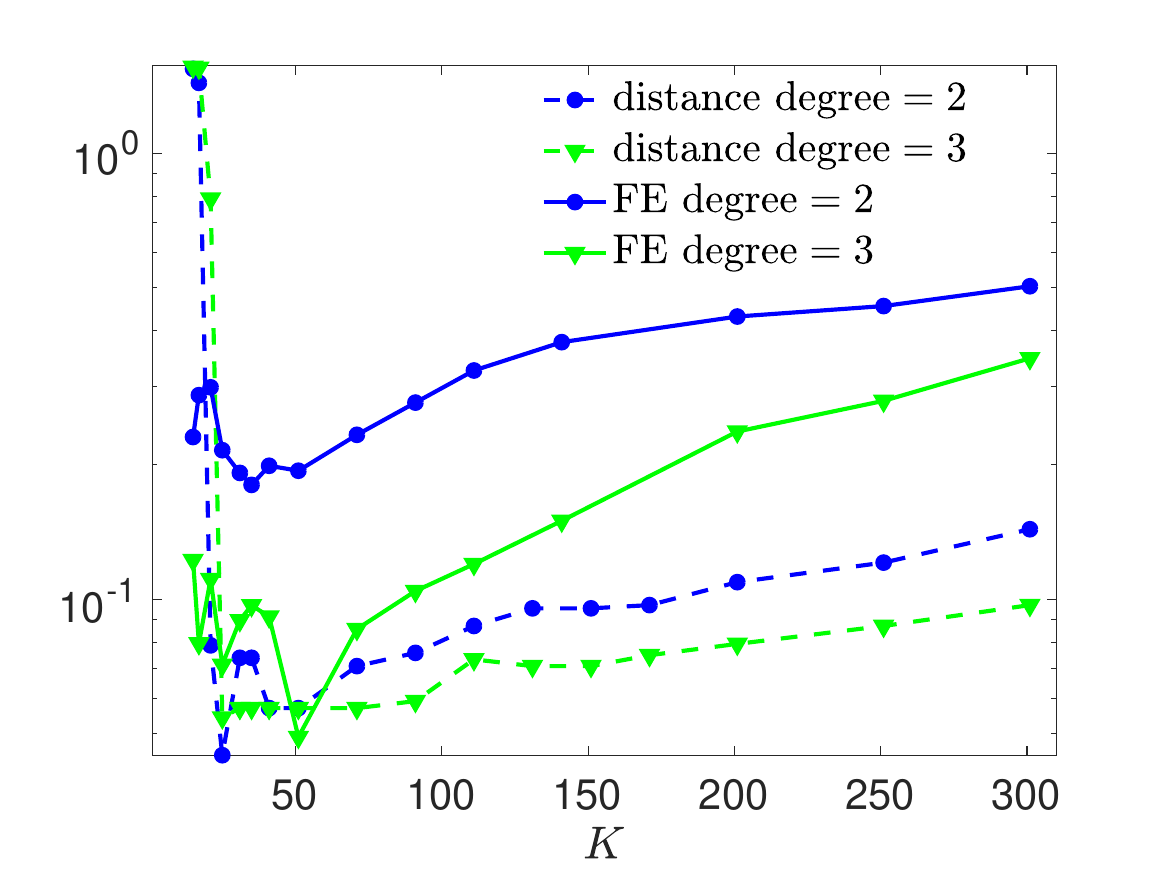}%
\end{tabular}
}
\caption{\textbf{2D semi-torus in $\mathbb{R}^3$}. $N=3200$. In panels (a)-(b), we identify points with $w_1>0$ and $w_1<0$ in blue and green, respectively. In panel (c), we show the maximum distance of points with $w_1>0$ to the boundary, $\max_{\mathbf{x}_i: w_1(\mathbf{x}_i)>0} d_g(\mathbf{x}_i,\partial M)$, in dashes, and the corresponding forward error (FE) in solid lines.}
\label{dist_w1}
\end{figure*}

\section{Application to solving Poisson problems}\label{pde}

In this section, we will discuss an application of the proposed discretization to solve two Poisson problems, one on closed manifolds and another one on compact manifolds with homogeneous Dirichlet boundary condition.

\subsection{Closed manifolds}
For a closed manifold, we consider the following PDE problem,
\BEA
(a -\Delta _{M})u = f, \quad x\in M, \label{closed_pde}
\EEA
where $a>0$ and $f$ are defined such that the problems are well-posed. Numerically, we will approximate the solution to the PDE problem in the pointwise sense on $X$, solving an $N\times N$ linear algebra problem
\BEA
\hat{L}_{X,A} \mathbf{U}  := (A- \hat{L}_X ) \mathbf{U} = \mathbf{f} = \mathcal{R}_X f,\label{closed_linproblem}
\EEA
for $\mathbf{U} \in \mathbb{R}^N$, where $A$ denotes a diagonal matrix with diagonal components $a(\mathbf{x}_i), i=1,\ldots, N$.

In this case, one can state the following convergence theorem.

\begin{thm}\label{thm_closed}
Let $u\in C^{l+1}(M^*)$ (with $l \geq 2$) be a classical solution of the PDE problem in \eqref{closed_pde} on a smooth $d-$dimensional closed manifold $M\subset \BR^n$. Let $X\subset M$ be i.i.d. uniform samples with $h_{X,M}\leq h_0$.  Let $\hat{L}_X$ in \eqref{closed_linproblem} be the approximate Laplacian matrix constructed from Algorithm~\ref{algo:local-LS} using appropriate $K$-nearest neighbors, degree-$l$ polynomials, and analytic projection matrix $\mathbf{P}$, followed by the Algorithm in \eqref{eqn:linear_optm}-\eqref{eqn:linear_const} for the linear optimization problem. Assume that the linear optimization problem in \eqref{eqn:linear_optm}-\eqref{eqn:linear_const} has a solution $C=0$, then with probability higher than {\color{black}$1-\frac{2}{N}$},
\[
\| \mathbf{U} - \mathbf{u} \|_\infty = O\left(N^{-\frac{l-1}{d}}\right),
\]
where $\mathbf{U}\in \BR^N$ solves \eqref{closed_linproblem} and $\mathbf{u} = \mathcal{R}_X u=(u(\mathbf{x}_1), \ldots,u(\mathbf{x}_N))^\top$. Here, the constant in the big-oh error bound is independent of $N$.
\end{thm}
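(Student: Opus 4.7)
The plan is a consistency-plus-stability argument for discrete elliptic operators, exploiting that the linear program \eqref{eqn:linear_optm}-\eqref{eqn:linear_const} only perturbs the least-squares weights along the kernel of the polynomial consistency constraints.

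First, I would establish the pointwise consistency of $\hat{L}_X$ at rate $O(h_{X,M}^{l-1})$. Fix a base point $\mathbf{x}_0 = \mathbf{x}_i$ with stencil $S_{\mathbf{x}_0}$. The first family of constraints in \eqref{eqn:linear_const} forces $\sum_{k=1}^K (\hat{w}_k - w_k)\, q(\mathbf{x}_{0,k}) = 0$ for every $q \in \mathbb{P}_{\mathbf{x}_0}^{l,d}$. Using the intrinsic Taylor expansion \eqref{eqn:int_poly2}, I decompose $u = P + R$ locally with $P \in \mathbb{P}_{\mathbf{x}_0}^{l,d}$ and $|R(\mathbf{x}_{0,k})| = O(h_{X,M}^{l+1})$ uniformly on the stencil, so that
\[
(\hat{L}_X \mathbf{u})_i - \Delta_M u(\mathbf{x}_i) = \big[(L_X \mathbf{u})_i - \Delta_M u(\mathbf{x}_i)\big] + \sum_{k=1}^K (\hat{w}_k - w_k) R(\mathbf{x}_{0,k}).
\]
The bracketed term is $O(h_{X,M}^{l-1})$ by \eqref{errorrate}. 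To handle the remainder sum, I need the weight-size bounds $\sum_k |w_k|,\, \sum_k |\hat{w}_k| = O(h_{X,M}^{-2})$. For $w_k$, this is the standard GMLS scaling for second-order operators, guaranteed by stable invertibility of $\boldsymbol{\Phi}^\top \boldsymbol{\Phi}$ under the quasi-uniformity in \eqref{filldist_bdd}. For $\hat{w}_k$, the equality $\sum_k \hat{w}_k = 0$ (from the constant-polynomial constraint) combined with $C=0$ yields $\sum_k |\hat{w}_k| = 2|\hat{w}_1|$; bounding $|\hat{w}_1| = O(h_{X,M}^{-2})$ then follows by testing the consistency constraint against the quadratic basis polynomial $q(\mathbf{x}) = (z^j)^2$, whose Laplace-Beltrami value at $\mathbf{x}_0$ equals $2$ in normal coordinates. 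Verifying this weight-magnitude control uniformly over base points is the main technical hurdle, since the linear program does not directly penalize the weight sizes.

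Next, I would establish a uniform stability bound on $\hat{L}_{X,A}^{-1}$. Under the hypothesis $C=0$, the $i$-th row of $\hat{L}_X$ has strictly negative diagonal entry $\hat{w}_1^{(i)}$ and nonnegative off-diagonal entries, with $\sum_k \hat{w}_k^{(i)} = 0$. Consequently $\hat{L}_{X,A} = A - \hat{L}_X$ has strictly positive diagonal $a(\mathbf{x}_i) + |\hat{w}_1^{(i)}|$, nonpositive off-diagonals, and exact row sum $a(\mathbf{x}_i) \geq a_{\min} > 0$; this M-matrix structure with row sums uniformly bounded below yields the classical row-sum bound $\|\hat{L}_{X,A}^{-1}\|_\infty \leq 1/a_{\min}$. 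Combining with the consistency estimate via $\hat{L}_{X,A}(\mathbf{U} - \mathbf{u}) = \mathbf{f} - \hat{L}_{X,A}\mathbf{u}$ gives $\|\mathbf{U} - \mathbf{u}\|_\infty = O(h_{X,M}^{l-1})$, and converting the fill distance to sample size through \eqref{filldist_bdd}, $h_{X,M} \leq C(d) N^{-1/d}$ with probability at least $1 - 2/N$, yields the stated rate $O(N^{-(l-1)/d})$ with the same probability.
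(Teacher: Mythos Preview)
Your overall architecture (consistency of $\hat{L}_X$ at rate $O(h_{X,M}^{l-1})$ plus the M-matrix/Ahlberg--Nilson--Varah stability bound $\|\hat{L}_{X,A}^{-1}\|_\infty \le 1/a_{\min}$) is exactly the paper's, and your stability paragraph is essentially identical to theirs. The gap is precisely where you flag it: the weight-magnitude bound $\sum_k|\hat w_k|=2|\hat w_1|=O(h_{X,M}^{-2})$ does \emph{not} follow from testing the quadratic constraint. Testing $q=(z^j)^2$ gives only
\[
\sum_{k\ge 2}\hat w_k\,(z_k^j)^2=\sum_k w_k\,(z_k^j)^2=\Delta_M(z^j)^2(\mathbf x_0)+O(h_{X,M}^{l-1})=O(1),
\]
because $z_1^j=0$. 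To extract $\sum_{k\ge 2}\hat w_k$ from this you would need $\min_{k\ge 2}\|\mathbf z_k\|^2\gtrsim h_{X,M}^2$, i.e.\ a bounded mesh ratio. For i.i.d.\ samples the paper's bound \eqref{filldist_bdd} only gives $q_{X,M}\gtrsim N^{-2/d}$ while $h_{X,M}\sim N^{-1/d}$, so the mesh ratio blows up and a single very close neighbor can make $\sum_k|\hat w_k|$ arbitrarily large while keeping the quadratic constraint finite. Your crude step $|\sum_k(\hat w_k-w_k)R(\mathbf x_{0,k})|\le(\sum_k|\hat w_k|+\sum_k|w_k|)\cdot O(h^{l+1})$ therefore fails.

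The paper avoids bounding $\sum_k|\hat w_k|$ altogether. Writing the remainder \emph{pointwise} as $|R(\mathbf x_{0,k})|\le C\sum_{|\alpha|=l+1}|p_{\mathbf x_0,\alpha}(\mathbf x_{0,k})|$ and using Young's inequality on the monomials, $|\mathbf z_k^\alpha|\le C_0\sum_j|z_k^j|^{l+1}\le C_0\,h_{K,\max}^{\,l-1}\sum_j(z_k^j)^2$, one gets
\[
\Big|\sum_{k\ge 2}\hat w_k R(\mathbf x_{0,k})\Big|\;\le\;C\,h_{K,\max}^{\,l-1}\sum_j\sum_{k\ge 2}\hat w_k\,(z_k^j)^2\;=\;O(h_{K,\max}^{\,l-1}),
\]
feeding the very quadratic-constraint identity above back in at the last step. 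The nonnegativity $\hat w_k\ge 0$ (from $C=0$) is what makes the absolute value pass through. A separate high-probability lemma then controls the $K$-NN radius $h_{K,\max}=O(N^{-1/d})$, which is not the same object as $h_{X,M}$. So the fix is to replace your uniform bound $|R|=O(h^{l+1})$ by the sharper pointwise $|R(\mathbf x_{0,k})|\lesssim h_{K,\max}^{\,l-1}\|\mathbf z_k\|^2$ and let the quadratic constraint do the rest; no $\ell^1$ control of the LP weights is needed (or available).
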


\begin{proof}
See Appendix~\ref{app:A}.
\end{proof}

\begin{rem}\label{rem_assumption}
%Here are some comments for the assumption and proof (in Appendix~\ref{app:A}) of Theorem~\ref{thm_closed}.
Uniform samples are assumed for $X$ just for the convenience of proof, while the result can be generalized to continuous sampling density with lower and upper bounds. In fact, in our numerical implementation for Example~\ref{ex1}, the random data points are not uniformly distributed on the ellipse. In particular, those points are generated from a uniform distribution in the intrinsic coordinate and then are mapped through the embedding onto the ellipse.
\end{rem}
\begin{rem}
We assume $C^\infty$ manifold for the Taylor expansion to be valid for convenience. In practice, the regularity of manifolds needs to be at least equal to the regularity of functions defined on $M^*$ in order to achieve the desired convergence rates.
\end{rem}

We now present the numerical results of solving elliptic PDEs in \eqref{closed_pde} on the 1D ellipse
in Example~\ref{ex1}{\color{black}, where the manifold is assumed to be known (i.e., we used the analytic projection matrix $\mathbf{P}$).} Setting $a=1>0$ in \eqref{closed_pde} and $u(\mathbf{x}(\theta))=\sin(\theta)$, the manufactured $f$ can be calculated in local coordinates using (\ref{eqn:Lap_ellipse}). In Fig.~\ref{fig1_convg}, we plot the error of the solutions, $%
\Vert \mathbf{U}- \mathbf{u} \Vert _{\infty }$, which we refer to as
the Inverse Error (\textbf{IE}), as a function of $N$. One can see that the solutions
are convergent  for the Laplacian matrix with linear optimization (Fig.~\ref{fig1_convg}%
(a)) whereas the \textbf{IE} obtained directly from {\color{black}the least squares approximation (without optimization)} do not decay (Fig.~\ref{fig1_convg}(b)). The reason for this result is that the Laplacian matrices become stable
after optimization whereas those matrices obtained directly from {\color{black}the least squares approximation} are not stable
as seen in Fig.~\ref{fig1_stab2}(c). The round-off limit of $O(N^{2})$ can be
seen clearly in Fig.~\ref{fig1_convg}(a). Notice, however, that the convergence rates
for polynomials of even degrees are not consistent (faster than) with the consistency rates; IEs decay
on $\mathcal{O}(N^{-2})$ and $\mathcal{O}(N^{-4})$\ for degree$-2$ and $-4$, respectively, which is one order faster than the corresponding FEs. While this phenomenon, which is known as super-convergence, has been observed on quasi-uniform data and explained with a symmetry argument in \cite{liang2013solving}, it is unclear to us why it still persists even when the symmetry is not valid when the training data are randomly sampled. This leaves room for future investigations.

%{\color{red}You can add a better argument?} {\color{blue} our understanding towards the super-convergence or more details for Hongkai Zhao's argument? I guess related to the stability structure of matrices and also related to the manifold setup instead of Euclidean space. }

\begin{figure*}[htbp]
{\scriptsize \centering
\begin{tabular}{cc}
{\normalsize (a) with linear programming} & {\normalsize (b) without optimization}
\\
\includegraphics[width=3
in, height=2.2 in]{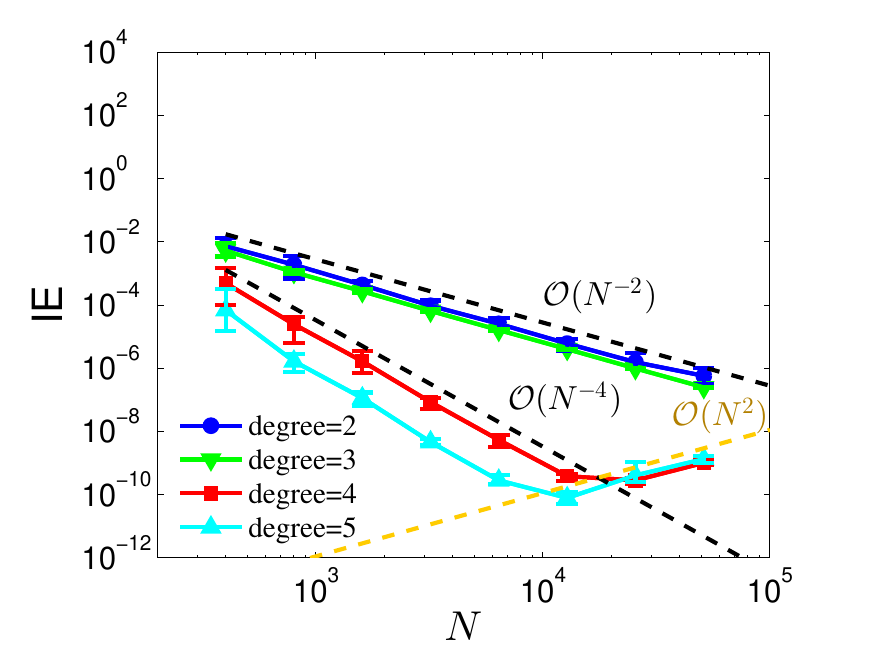} &
\includegraphics[width=3
in, height=2.2 in]{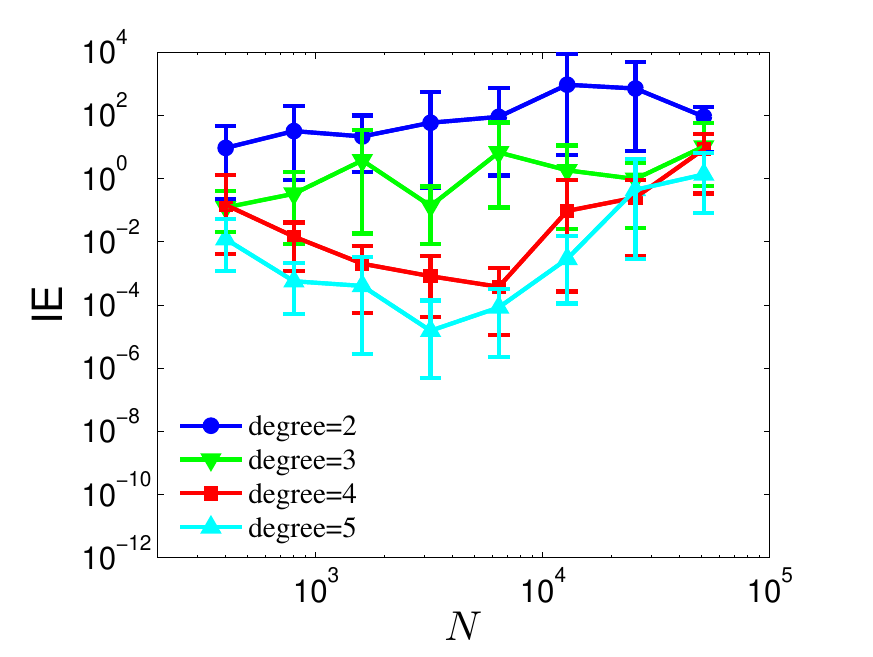}%
\end{tabular}
}
\caption{\textbf{1D ellipse in $\mathbb{R}^2$}. $K=21$ nearest
neighbors are used. Comparison of
inverse errors (IE) of solutions between (a) with linear programming and (b) without optimization. }
\label{fig1_convg}
\end{figure*}

In Appendix~\ref{app:B}, we will further show another numerical example of a 2D semi-torus in order to examine the convergence rates, where sufficiently many points are randomly sampled exactly on the boundary of the semi-torus. In that example, we can also observe the super-convergence for randomly sampled data using our approach. In the next section, we will discuss the approach when points that lie exactly on the boundary are not available since the boundary is a volume-measure zero set.

\subsection{Dirichlet boundary condition}

We also consider the Poisson problem on a compact manifold with the homogeneous Dirichlet boundary condition,
\BEA
\begin{aligned}\label{diri_pde}
\Delta _{M}u &= f, \quad \mathbf{x}\in \textup{int}(M),  \\
u &= 0, \quad  \mathbf{x}\in \partial M,
\end{aligned}
\EEA
where $f$ is defined such that the problem is well-posed. Here, we consider the regime that we have no
sample points on the boundary as the boundary is a measure zero set (see the sketch in Fig.~\ref{figtruncate}(a)).
That is, let $X=\{\mathbf{x}_i\}_{i=1}^N$ be randomly sampled data points on $\textup{int}(M)$.
Numerically, we will follow the volume-constraint approach by imposing the boundary conditions to points near the boundary \cite{du2012analysis,mengesha2014bond,shi2017enforce}.

\subsubsection{Completely unknown manifolds}\label{sec:comunk}

In this subsection, we assume that we are only given a point cloud on the interior of the domain and we do not know the location of the boundary.
For this unknown manifold setup, we will estimate the tangent spaces
using a second-order local SVD method introduced in our previous work \cite{harlim2022rbf}. One can, of course, use the classical first-order local SVD method \cite{donoho2003hessian,zhang2004principal,tyagi2013tangent}. Subsequently, we used the approximate tangent spaces to estimate the projection matrix $\mathbf{P}$ in Algorithm \ref{algo:local-LS}. While one can estimate the distance of each sample point to the boundary (e.g., see the method proposed in \cite{berry2017density}), we will introduce an approach that does not require knowledge or estimation of the distance of each sample point to the boundary. In particular, we will identify the points that are near the boundary as those correspond to $w_1>0$ as we pointed out in Remark~\ref{rem_w1}. Denoting,
\BEA
Y = \big\{\mathbf{x}\in X | w_1(\mathbf{x}) <0\big\} \subset \textup{int}(M),\label{setY}
\EEA
the volume-constraint approach to the Dirichlet problem above yields to solving the following linear algebra problem,
\BEA
 \hat{L}_Y  \mathbf{U}_1 &=& \mathcal{R}_{Y} f,\label{diri_linproblem}\\
 \mathbf{U}_2 &=& \mathbf{0},\notag
\EEA
for $\mathbf{U}_1$ that approximates $\mathcal{R}_Y u$  and $\mathbf{U}_2$ that approximates $\mathcal{R}_{X\backslash Y} u$. Here, $\hat{L}_Y \in \mathbb{R}^{|Y| \times |Y|}$ is a square matrix which takes the rows and columns of $\hat{L}_X$ corresponding to points in $Y$, where $|Y|$ denotes the number of points in $Y$. In Fig.~\ref{figtruncate}(a), green points in $Y$ are interior points on which differential equations are satisfied. Blue points in $X\backslash Y$ are treated as artificial boundary points on which the volume constraint condition $\mathbf{U}_2=\mathbf{0}$ is imposed.

\subsubsection{Knowing the location of the boundary}

In the case when the location of the boundary is known, we can deduce the following theoretical result. For the convenience of the analysis below, we define
\[
M_\epsilon = \big\{\mathbf{x}\in M : d_g(\mathbf{x},\partial M)> \epsilon\big\} \subset \textup{int}(M),
\]
to be the subset of points in $M$  whose geodesic distance from the boundary greater than $\epsilon>0$. Here, we assume that we know the location of the boundary $\partial M$ so that $d_g (\mathbf{x},\partial M)$ is computable for any $\mathbf{x} \in M$. Subsequently, we define $X_\epsilon = X\cap M_\epsilon$ to be the set of data points that are in $M_\epsilon$ (see Fig.~\ref{figtruncate}(a)). For convenience, we define $N_1 = |X_\epsilon |$ and $N_2 = |X\backslash X_\epsilon|$ such that $N=N_1+N_2$ and correspondingly define $\mathbf{u}_1 = \mathcal{R}_{X_\epsilon} u \in \mathbb{R}^{N_1}$ and $\mathbf{u}_2 = \mathcal{R}_{X\backslash X_\epsilon} u \in \mathbb{R}^{N_2}$.

 Notice that if $\epsilon>0$ is chosen such that,
\BEA
\epsilon^* := \max_{\mathbf{x}_i \in X\backslash Y} d_g(\mathbf{x}_i, \partial M), \label{eps_alg}
\EEA
and no points in $X\backslash X_{\epsilon^*}$ satisfy  $w_1 < 0$, i.e., $X\backslash X_{\epsilon^*} \cap Y =\emptyset$, then $X_{\epsilon^*}=Y$. Treating all points in $X \backslash X_\epsilon$ as artificial boundary points, the volume-constraint approach is to solve
\BEA
 \hat{L}_{X_\epsilon}  \mathbf{U}_1 &=& \mathcal{R}_{X_\epsilon} f,\label{diri_linproblem2}\\
 \mathbf{U}_2 &=& \mathbf{0},\notag
\EEA
for $\mathbf{U}_1\in \mathbb{R}^{N_1}$ and $\mathbf{U}_2 \in \mathbb{R}^{N_2}$ approximating $\mathbf{u}_1$  and $\mathbf{u}_2$, respectively. Here, $\hat{L}_{X_\epsilon} \in \mathbb{R}^{N_1\times N_1}$ is the square Laplacian matrix.

With these definitions, we can prove the following lemma.
%Choose $\epsilon \geq \epsilon^* $
\begin{lem}\label{consistency}
Let  $u\in C^{l+1}(M^*)$ be a classical solution of the PDE problem in \eqref{diri_pde} on a smooth compact $d-$dimensional manifold $M\subset \BR^n$ with smooth boundaries. Let $X\subset M$ be i.i.d. uniform samples with $h_{X,M}\leq h_0$. Let the Laplacian matrix correspond to the analytic projection matrix $\mathbf{P}$ and the solution of the linear optimization problem in \eqref{eqn:linear_optm}-\eqref{eqn:linear_const} restricted on $X_\epsilon$ as $[\hat{L}_{X_\epsilon}, \hat{L}_{X \backslash X_\epsilon}] \in \mathbb{R}^{N_1 \times (N_1+N_2)}$. Assume that $C=0$ for all points in $X_\epsilon$, then with probability higher than $1-\frac{2}{N}$,
\[
\left\Vert [\hat{L}_{X_\epsilon}, \hat{L}_{X\backslash X_\epsilon}] \left(
\left[
\begin{array}{c}
\mathbf{U}_{1} \\
\mathbf{U}_{2}%
\end{array}%
\right] -
\left[
\begin{array}{c}
\mathbf{u}_{1} \\
\mathbf{u}_{2}%
\end{array}%
\right]
\right) \right\Vert_\infty = O\left(N^{-\frac{l-1}{d}}\right),
\]
where $(\mathbf{U}_1,\mathbf{U}_2)^\top$ solves \eqref{diri_linproblem2} and $\mathbf{u}=\mathcal{R}_X u=(\mathbf{u}_1,\mathbf{u}_2)^\top$ is the solution.  Moreover, choosing $\epsilon \sim h_{X,M}$, then with probability higher than $1-\frac{1}{N}$,
\[
\|\mathbf{U}_2 - \mathbf{u}_2\|_\infty = \|\mathbf{u}_2\|_\infty = O\left(N^{-\frac{1}{d}}\right).
\]
%where $\mathbf{u}_2 = \mathcal{R}_{X\backslash X_\epsilon} u$.
% $\mathbf{U}_1\in \BR^{N_1}$ solves $\hat{L}_{X_\epsilon}\mathbf{U}_1 = \mathcal{R}_{X_\epsilon}f$, $\mathbf{u_1} = \mathcal{R}_{X_\epsilon} u$
\end{lem}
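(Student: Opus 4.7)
The plan is to split the proof along the two claims. For the forward-error bound, I would start from the volume-constrained system \eqref{diri_linproblem2}, which gives $\hat L_{X_\epsilon}\mathbf{U}_1 + \hat L_{X\backslash X_\epsilon}\mathbf{U}_2 = \mathcal R_{X_\epsilon}f$ with $\mathbf{U}_2=\mathbf{0}$. Since $\Delta_M u = f$ on $X_\epsilon \subset \textup{int}(M)$, the residual at row $i\in X_\epsilon$ equals
\[
\Delta_M u(\mathbf{x}_i) - \sum_{k=1}^K \hat w_{i,k}\,u(\mathbf{x}_{i,k}),
\]
which is exactly the pointwise Laplacian consistency error of the stabilized weights $\hat w_{i,k}$ at the base point $\mathbf{x}_i$. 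The next step is to show that $C=0$ on $X_\epsilon$ transfers the GMLS bound \eqref{errorrate} from the unconstrained weights $w_{i,k}$ to $\hat w_{i,k}$: the first-line constraint in \eqref{eqn:linear_const} says $\sum_k \hat w_{i,k}\, p_{\mathbf{x}_i,\alpha}(\mathbf{x}_{i,k}) = \sum_k w_{i,k}\, p_{\mathbf{x}_i,\alpha}(\mathbf{x}_{i,k})$ for $|\alpha|\le l$, so the two schemes agree on every polynomial in $\mathbb P_{\mathbf{x}_i}^{l,d}$, and the Taylor expansion \eqref{eqn:int_poly2} of $u$ at $\mathbf{x}_i$ reduces the comparison to the remainder of order $O(\|\mathbf{z}\|^{l+1})$. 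Controlling the remainder via the diagonal-dominance identity $\sum_k|\hat w_{i,k}|= 2|\hat w_{i,1}|$ (which follows from $C=0$ together with $\sum_k \hat w_{i,k}=0$) and an $O(h_{X,M}^{-2})$ stencil-weight bound on $|\hat w_{i,1}|$ then yields $O(h_{X,M}^{l-1})$ per row, and hence an $\ell^\infty$-bound of the same order. Invoking \eqref{filldist_bdd}, which holds with probability at least $1-2/N$, converts this to $O(N^{-(l-1)/d})$.

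For the second claim, I would simply use that $\mathbf{U}_2 = \mathbf{0}$ exactly, so $\|\mathbf{U}_2 - \mathbf{u}_2\|_\infty = \|\mathbf{u}_2\|_\infty$, and bound the latter. Pick any $\mathbf{x}_j \in X\backslash X_\epsilon$ and a nearest boundary point $\mathbf{y}_j \in \partial M$, so $d_g(\mathbf{x}_j,\mathbf{y}_j)\le \epsilon$. Expanding $u$ along the minimizing geodesic from $\mathbf{y}_j$ to $\mathbf{x}_j$ and using $u(\mathbf{y}_j)=0$ together with $u\in C^{l+1}(M^*)$ gives $|u(\mathbf{x}_j)|\le \|\textup{grad}_g u\|_{L^\infty(M)}\,\epsilon$. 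Choosing $\epsilon \sim h_{X,M}$ and applying the upper tail in \eqref{filldist_bdd}, which holds with probability at least $1-1/N$, delivers the claimed $O(N^{-1/d})$ rate.

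The main obstacle I expect is the transfer of the GMLS error estimate \eqref{errorrate} from the raw least-squares weights $w_k$ to the constrained weights $\hat w_k$. The bound in \cite{mirzaei2012generalized} is proved for the former, but the linear program may alter the weights substantially, so two ingredients are needed: the polynomial-reproduction identity in \eqref{eqn:linear_const} (ensuring no new error is incurred on $\mathbb P_{\mathbf{x}_0}^{l,d}$), and a quantitative bound on $\sum_k|\hat w_k|$ (ensuring the Taylor-remainder contribution does not blow up). The first is immediate; the second rests on $C=0$, which forces off-diagonal nonnegativity and hence $\sum_k|\hat w_k|=2|\hat w_1|$, together with an a priori estimate $|\hat w_1|=O(h_{X,M}^{-2})$ coming from consistency on quadratic polynomials. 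Carefully pinning down this stencil-weight bound is the delicate step; I would reuse the argument developed in Appendix~\ref{app:A} for Theorem~\ref{thm_closed} rather than redo it from scratch.
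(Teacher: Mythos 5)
Your overall architecture coincides with the paper's: the first bound is obtained by identifying the residual on each row indexed by $X_\epsilon$ with the pointwise consistency error of the stabilized weights and rerunning the argument of Theorem~\ref{thm_closed}, and the second bound is the Lipschitz estimate near the boundary combined with \eqref{filldist_bdd}. Your treatment of the second claim is correct and essentially identical to the paper's.

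The gap is in your mechanism for the Taylor-remainder term. You propose to bound it by $R_{\max}\, h_{K,\max}^{l+1}\sum_k|\hat w_k|$, use $\sum_k|\hat w_k|=2|\hat w_1|$ (which is indeed correct under $C=0$ together with $\sum_k\hat w_k=0$), and then invoke $|\hat w_1|=O(h_{X,M}^{-2})$ ``from consistency on quadratic polynomials.'' That last estimate is not available for randomly sampled data. The quadratic-reproduction constraint in \eqref{eqn:linear_const} only yields $\sum_{k\ge 2}\hat w_k\|\mathbf{z}_{i,k}\|^2=\sum_{j}\Delta_M (z^j)^2(\mathbf{x}_i)+O(h_{X,M}^{l-1})=O(1)$; to extract $|\hat w_1|=\sum_{k\ge 2}\hat w_k=O(h_{X,M}^{-2})$ from this you would need $\|\mathbf{z}_{i,k}\|\gtrsim h_{X,M}$ for every neighbor, i.e.\ quasi-uniformity of the stencil. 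For i.i.d.\ samples the separation distance scales like $q_{X,M}\sim N^{-2/d}$ by \eqref{filldist_bdd}, so the honest a priori bound is only $|\hat w_1|=O(q_{X,M}^{-2})=O(h_{X,M}^{-4})$, which costs two orders and leaves you with $O(h_{X,M}^{l-3})$ instead of $O(h_{X,M}^{l-1})$. The paper's proof (Appendix~\ref{app:A}, term $I_3$) deliberately avoids any bound on $\sum_k|\hat w_k|$: it keeps the quadratic factor inside the weighted sum, using $\hat w_k\ge 0$ to write $\sum_{k\ge 2}\hat w_k |z^j_{i,k}|^{l+1}\le h_{K,\max}^{l-1}\sum_{k\ge 2}\hat w_k (z^j_{i,k})^2$, and then controls the remaining sum by the quadratic constraint; no lower bound on neighbor distances is ever needed. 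Since you state you would ultimately ``reuse the argument in Appendix A,'' you would presumably land on this; but as written your route does not close for random data, and the fix is precisely to not factor the weights out of the weighted sum.
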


\begin{proof}
%Notice that $\epsilon \geq \epsilon^* $ implies that $C=0$ for all points in $X_\epsilon$.
The proof of the first part follows the same argument as the proof of Theorem~\ref{thm_closed}. As for the second part, since $u = 0$ at the boundary, then by the fact that $\overline{M}\subset M^*$ is compact,  the regularity suggests that $u\in C^{0,1}(M)$ such that,
\[
|u(\mathbf{x})| \leq L \epsilon, \quad \forall \mathbf{x}\in M\backslash M_\epsilon,
\]
where $L$ denotes the Lipschitz constant. Setting $\epsilon \sim h_{X,M}$ and using the bound in \eqref{filldist_bdd} and the fact that $X\backslash X_\epsilon \subset M\backslash M_\epsilon$, the proof is complete.
\end{proof}

\begin{rem}
%Numerically, we assume that we know the location of the boundary but we do not know the tangent spaces. Thus, we need to estimate the tangent spaces as well as the projection matrix $\mathbf{\hat{P}}$.
Theoretically, we assumed the analytic $\mathbf{P}$ in above Lemma~\ref{consistency} for convenience of analysis. If the approximate $\mathbf{\hat{P}}$ is used, we need to account an additional error term for the error estimation from $\mathbf{\hat{P}}$ (see e.g. Theorem~3.2 in \cite{harlim2022rbf}
for the second-order local SVD method that will be used in the numerical experiments in Section~\ref{numerics}).
\end{rem}

\begin{rem}\label{rem4.3}
The motivation for the result above is that by volume constraint, we are estimating $\mathbf{u}_2$ with $\mathbf{U}_2=0$ and the error will dominate if {\color{black}the least squares approximation} is employed with polynomials of order $l>2$. The estimation involves truncating all the points whose distance from the boundary is proportional to the fill distance $h_{X,M}$ which is larger than the separation distance $q_{X,M}\geq c(d)N^{-\frac{2}{d}}$ as noted in \eqref{filldist_bdd}. This choice of $\epsilon$, unfortunately, does not theoretically guarantee that all points in $X\backslash X_\epsilon$ satisfies $w_1 \geq 0$. As we stated before in Remark~\ref{rem_w1}, we only numerically observe that $w_1>0$ for any choice of $K$ (nearest neighbor parameter) for points near the boundary.
\end{rem}

% the resulting matrix $\hat{L}_{X_\epsilon}$ satisfies
To prove the convergence, we first show:
\begin{lem}\label{dis_max_prin}(Discrete maximum principle)
Let the Laplacian matrix corresponding to the solution $C=0$ of the linear optimization problem in \eqref{eqn:linear_optm}-\eqref{eqn:linear_const} restricted on $X_\epsilon$ be denoted as  $[\hat{L}_{X_\epsilon}, \hat{L}_{X \backslash X_\epsilon}] \in \mathbb{R}^{N_1 \times N}$, then a discrete maximum principle is satisfied in the following sense. Let $\mathbf{v}\in\mathbb{R}^N$ be such that $[\hat{L}_{X_\epsilon}, \hat{L}_{X \backslash X_\epsilon}]\mathbf{v} \geq 0$, then
\[
\max\limits_{j:\mathbf{x}_j\in \overline{X_\epsilon}} \mathbf{v}_j = \max\limits_{\ell: \mathbf{x}_\ell\in \mathbb{X}_{K,\epsilon}}  \mathbf{v}_\ell,
\]
where $\overline{X_\epsilon} = X_\epsilon \cup \mathbb{X}_{K,\epsilon} \subset X$ and
\[
\mathbb{X}_{K,\epsilon} = \Big\{\mathbf{x}_{i,k} \in X\backslash X_\epsilon \big{|} 1<k\leq K, \textup{ for } \mathbf{x}_i = \mathbf{x}_{i,1} \in X_\epsilon \Big\} ,
\]
is the set of $K$-nearest neighbors of any points $\mathbf{x}_i \in X_\epsilon$ that are in $X\backslash X_\epsilon$.
\end{lem}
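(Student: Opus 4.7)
My plan is to prove a standard discrete maximum principle by cataloguing the sign structure of the weights produced by the linear program under the assumption $C=0$, rewriting the row-wise inequality $[\hat{L}_{X_\epsilon},\hat{L}_{X\backslash X_\epsilon}]\mathbf{v}\geq 0$ as a weighted sub-mean-value property, and then running a classical propagation-of-the-maximum argument with $\mathbb{X}_{K,\epsilon}$ playing the role of the Dirichlet boundary.

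I would first record three structural identities that come directly from \eqref{eqn:linear_optm}--\eqref{eqn:linear_const}: for each base point $\mathbf{x}_i\in X_\epsilon$ with weights $\{\hat{w}_k(\mathbf{x}_i)\}_{k=1}^K$, one has $\hat{w}_1(\mathbf{x}_i)<0$ from the explicit constraint, $\hat{w}_k(\mathbf{x}_i)\geq 0$ for $k=2,\ldots,K$ because $C=0$ reduces $\hat{w}_k+C\geq 0$ to $\hat{w}_k\geq 0$, and $\sum_{k=1}^K\hat{w}_k(\mathbf{x}_i)=0$ from the consistency constraint applied to the constant polynomial $p_{\mathbf{x}_i,\mathbf{0}}\equiv 1$ (which gives $\Delta_M 1=0$), hence $|\hat{w}_1(\mathbf{x}_i)|=\sum_{k=2}^K\hat{w}_k(\mathbf{x}_i)$. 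Writing the $i$-th row of the hypothesis and substituting these identities yields the sub-mean-value property
\[
\mathbf{v}_i \;\leq\; \frac{\sum_{k=2}^K\hat{w}_k(\mathbf{x}_i)\,\mathbf{v}(\mathbf{x}_{i,k})}{\sum_{k=2}^K\hat{w}_k(\mathbf{x}_i)} \;\leq\; \max_{k\in\mathcal{N}(i)}\mathbf{v}(\mathbf{x}_{i,k}),
\]
where $\mathcal{N}(i):=\{2\leq k\leq K:\hat{w}_k(\mathbf{x}_i)>0\}$, with equality in the last bound forcing $\mathbf{v}(\mathbf{x}_{i,k})=\mathbf{v}_i$ for every $k\in\mathcal{N}(i)$.

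With this in hand I would run the propagation. Let $M_\star:=\max_{\mathbf{x}_j\in\overline{X_\epsilon}}\mathbf{v}_j$ and $S:=\{\mathbf{x}_j\in\overline{X_\epsilon}:\mathbf{v}_j=M_\star\}$, and suppose for contradiction that $S\cap\mathbb{X}_{K,\epsilon}=\emptyset$, so that $S\subset X_\epsilon$. For any $\mathbf{x}_{i^\star}\in S$, the saturated inequality above forces $\mathbf{v}(\mathbf{x}_{i^\star,k})=M_\star$, and hence $\mathbf{x}_{i^\star,k}\in S$, for every positive-weight neighbor index $k\in\mathcal{N}(i^\star)$. Any such neighbor that happens to lie in $X\setminus X_\epsilon$ automatically lies in $\mathbb{X}_{K,\epsilon}$ by the very definition of that set, contradicting the supposition; otherwise all positive-weight neighbors remain in $S\cap X_\epsilon$ and one iterates the argument from each of them.

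The main obstacle is terminating this iteration: one has to rule out the possibility that propagation stays trapped inside $X_\epsilon$, i.e.\ that some nonempty subset $\widetilde S\subset X_\epsilon$ is closed under the positive-weight neighborhood relation without ever reaching a point in $X\setminus X_\epsilon$. Unlike the uniform-grid finite-difference setting, here many $\hat{w}_k$ can vanish, and the directed positive-weight graph induced on randomly sampled data must be controlled. I would close this gap by invoking the sampling bound \eqref{filldist_bdd} together with the choice $\epsilon\sim h_{X,M}$: for any $\mathbf{x}_i\in X_\epsilon$ sufficiently close to $\partial M$, the radius of its $K$-nearest-neighbor ball exceeds $\epsilon$, so the ball must contain a point of $X\setminus X_\epsilon$, and the consistency identity $\sum_{k\geq 2}\hat{w}_k=|\hat{w}_1|>0$ guarantees that at least one such neighbor carries strictly positive weight. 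Iterative propagation then strictly decreases the minimum boundary distance among points of $\widetilde S$ and terminates in $\mathbb{X}_{K,\epsilon}$ in finitely many steps. Formalizing this near-boundary graph-connectivity statement---perhaps as an additional structural assumption on the support of $\mathcal{N}(i)$ when $\mathbf{x}_i$ is close to $\partial M$---is where the bulk of the technical work would go.
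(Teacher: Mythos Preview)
Your approach is the same as the paper's: record the sign structure $\hat{w}_1<0$, $\hat{w}_k\geq 0$ for $k\geq 2$, $\sum_k\hat{w}_k=0$ coming from $C=0$ and the constant-polynomial constraint; rewrite the row inequality as a sub-mean-value property; then propagate the maximum outward until it hits $\mathbb{X}_{K,\epsilon}$.

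Where you differ is in rigor at the propagation step. The paper, at an interior maximum $\mathbf{x}_i$, simply asserts $\mathbf{v}_i=\mathbf{v}_{i,k}$ \emph{for all} $k\geq 2$ and then propagates through the full $K$-nearest-neighbor graph, which by construction reaches $\mathbb{X}_{K,\epsilon}$. You correctly observe that if some $\hat{w}_k=0$ (which a vertex solution of the linear program can certainly produce), equality in the convex combination only forces $\mathbf{v}_{i,k}=\mathbf{v}_i$ for indices with $\hat{w}_k>0$, so propagation runs on the positive-weight subgraph, and one must argue that this subgraph still connects every component of $X_\epsilon$ to $\mathbb{X}_{K,\epsilon}$. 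The paper does not address this; it effectively treats all off-diagonal weights as strictly positive. Your flagged gap is genuine, but the fix you sketch (invoking the sampling bound \eqref{filldist_bdd} and $\epsilon\sim h_{X,M}$ to force a near-boundary neighbor with positive weight) is additional work that the paper does not do. For matching the paper, the first two paragraphs of your plan already reproduce its proof; the connectivity discussion is a legitimate refinement that goes beyond it.
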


\begin{proof}
Since,
$
\left([\hat{L}_{X_\epsilon}, \hat{L}_{X \backslash X_\epsilon}]\mathbf{v}\right)_i = \sum_{k=1}^K \hat{w}_k \mathbf{v}_{i,k},
$
and $C=0$, the constraints in \eqref{eqn:linear_const} suggest that $[\hat{L}_{X_\epsilon}, \hat{L}_{X \backslash X_\epsilon}]$ is diagonally dominant.
Here, $\mathbf{v}_{i,k}$ takes the value on point $\mathbf{x}_{i,k}$. By the assumption $[\hat{L}_{X_\epsilon}, \hat{L}_{X \backslash X_\epsilon}]\mathbf{v} \geq 0$, we have,
\BEA
- \hat{w}_1 \mathbf{v}_i =- \left([\hat{L}_{X_\epsilon}, \hat{L}_{X \backslash X_\epsilon}]\mathbf{v}\right)_i + \sum_{k=2}^K \hat{w}_k \mathbf{v}_{i,k} \leq \sum_{k=2}^K \hat{w}_k \mathbf{v}_{i,k}. \label{inequal}
\EEA
Since $0\leq -\frac{\hat{w}_k}{\hat{w}_1}$ and $-\sum_{k=2}^K \frac{\hat{w}_k}{\hat{w}_1} = 1$, this inequality suggests that $\mathbf{v}_i$ is bounded above by a weighted average of $\mathbf{v}$ evaluated on its $K$-nearest neighbors. This means that if  $\mathbf{v}_i$ is a strict maximum at $\mathbf{x}_i$, the following inequality $\mathbf{v}_i > \mathbf{v}_{i,k}$ for some $k=2,\ldots, K$ contradicts \eqref{inequal}. In fact, if the maximum occurs on the interior points in $X_\epsilon$, then $\mathbf{v}_i = \mathbf{v}_{i,k}$, for all $k\geq 2$, so every nearest neighbors points are also maxima. Continuing this way on all points, we conclude that if the maximum occurs on the interior point, then $\mathbf{v}$ is constant over points in $\overline{X_\epsilon}$ and its maximum is also attained at its nearest neighbor that is in $\mathbb{X}_{K,\epsilon} \subset X\backslash X_\epsilon$ and the proof is complete.
\end{proof}

\begin{cor}\label{cor:min}
If $[\hat{L}_{X_\epsilon}, \hat{L}_{X \backslash X_\epsilon}]\mathbf{v} \leq 0$, then
$$
\min\limits_{j:\mathbf{x}_j\in \overline{X_\epsilon}} \mathbf{v}_j = \min\limits_{\ell: \mathbf{x}_\ell\in \mathbb{X}_{K,\epsilon}}  \mathbf{v}_\ell.
$$
\end{cor}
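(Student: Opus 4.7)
The plan is to derive this corollary directly from Lemma~\ref{dis_max_prin} by a sign flip, since the operator $[\hat{L}_{X_\epsilon}, \hat{L}_{X \backslash X_\epsilon}]$ is linear. Concretely, I would set $\mathbf{w} := -\mathbf{v}$ and observe that the hypothesis $[\hat{L}_{X_\epsilon}, \hat{L}_{X \backslash X_\epsilon}]\mathbf{v} \leq 0$ immediately yields $[\hat{L}_{X_\epsilon}, \hat{L}_{X \backslash X_\epsilon}]\mathbf{w} \geq 0$, so the discrete maximum principle applies to $\mathbf{w}$.

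From Lemma~\ref{dis_max_prin} applied to $\mathbf{w}$, one gets
$$
\max_{j:\mathbf{x}_j\in \overline{X_\epsilon}} \mathbf{w}_j = \max_{\ell: \mathbf{x}_\ell\in \mathbb{X}_{K,\epsilon}} \mathbf{w}_\ell.
$$
Then I would use the elementary identity $\max(-\mathbf{v}) = -\min(\mathbf{v})$ on both sides, which flips the extrema and recovers
$$
\min_{j:\mathbf{x}_j\in \overline{X_\epsilon}} \mathbf{v}_j = \min_{\ell: \mathbf{x}_\ell\in \mathbb{X}_{K,\epsilon}} \mathbf{v}_\ell,
$$
completing the proof.

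There is essentially no obstacle here; the only thing to double-check is that the hypotheses of Lemma~\ref{dis_max_prin}, in particular the diagonal-dominance structure arising from the $C=0$ solution of the linear program (which was used via $0\leq -\hat{w}_k/\hat{w}_1$ and $-\sum_{k=2}^K \hat{w}_k/\hat{w}_1 = 1$), are preserved under the trivial replacement $\mathbf{v} \mapsto -\mathbf{v}$, which they are because the weights $\hat{w}_k$ depend only on the geometry and not on $\mathbf{v}$. Thus the corollary follows by a one-line symmetry argument.
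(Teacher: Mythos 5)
Your proof is correct and is exactly the intended argument: the paper states this corollary without proof precisely because it follows from Lemma~\ref{dis_max_prin} applied to $-\mathbf{v}$, using linearity of the operator and $\max(-\mathbf{v})=-\min(\mathbf{v})$. Your remark that the diagonal-dominance structure is independent of $\mathbf{v}$ is the right (and only) point to check.
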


\begin{thm}[Convergence]\label{convrate_diri}
Let $u\in C^{l+1}(M^*)$ be a classical solution of the PDE problem in \eqref{diri_pde} where $\Delta_M$ satisfies the maximum principle.
Let the same hypothesis as in Lemma \ref{consistency} hold true.
%Let $X\subset M$ be i.i.d. uniform samples with $h_{X,M}\leq h_0$.  Let the matrix corresponding to the solution $C=0$ of the linear optimization problem in \eqref{eqn:linear_optm}-\eqref{eqn:linear_const} restricted on $X_\epsilon$ be denoted as $\hat{L}_{X_\epsilon}$.
For $\epsilon\sim h_{X,M}$, with probability higher than $1-\frac{5}{N}$,
\[
\| \mathbf{U} - \mathbf{u} \|_\infty = O\big(N^{-\frac{l-1}{d}}\big) + O\big(N^{-\frac{1}{d}}\big),
\]
where $\mathbf{U} = (\mathbf{U}_{1},\mathbf{U}_{2})^\top\in \BR^N$ solves $\hat{L}_{X_\epsilon}\mathbf{U}_1 = \mathcal{R}_{X_\epsilon}f$ with $\mathbf{U}_{2} = 0$.
\end{thm}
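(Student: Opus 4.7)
The plan is to carry out a standard discrete-maximum-principle argument with a smooth barrier function, using Lemma~\ref{consistency} for the consistency residual and Lemma~\ref{dis_max_prin}/Corollary~\ref{cor:min} for the stability. Write the error vector as $\mathbf{e} = \mathbf{U}-\mathbf{u}$, decomposed into $\mathbf{e}_1 = \mathbf{U}_1 - \mathbf{u}_1$ on $X_\epsilon$ and $\mathbf{e}_2 = \mathbf{U}_2 - \mathbf{u}_2 = -\mathbf{u}_2$ on $X\setminus X_\epsilon$. The second block is already under control: the second part of Lemma~\ref{consistency} gives $\|\mathbf{e}_2\|_\infty = \|\mathbf{u}_2\|_\infty = O(N^{-1/d})$ for the choice $\epsilon \sim h_{X,M}$, with probability at least $1-\tfrac{1}{N}$. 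Subtracting the identity $[\hat{L}_{X_\epsilon},\hat{L}_{X\setminus X_\epsilon}]\mathbf{u} = \mathcal{R}_{X_\epsilon}f + \mathbf{r}$ from $\hat{L}_{X_\epsilon}\mathbf{U}_1 = \mathcal{R}_{X_\epsilon}f$, where $\mathbf{r}$ is the truncation residual of Lemma~\ref{consistency}, yields $[\hat{L}_{X_\epsilon},\hat{L}_{X\setminus X_\epsilon}]\mathbf{e} = -\mathbf{r}$ with $\|\mathbf{r}\|_\infty = O(N^{-(l-1)/d})$, with probability at least $1-\tfrac{2}{N}$.

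Next I would introduce a smooth barrier $\phi\in C^{l+1}(M^*)$ satisfying $\phi\geq 0$ on $M$ and $\Delta_M \phi \leq -1$. Such a barrier exists on any smooth compact manifold with boundary (e.g.\ one can take a suitably scaled multiple of a Green-type function, or a smooth modification of a coordinate function after embedding $M$ in a tubular neighborhood). Applying the GMLS bound \eqref{MLS_error} to $\phi$ on each row associated with $X_\epsilon$, we obtain
\[
\bigl([\hat{L}_{X_\epsilon},\hat{L}_{X\setminus X_\epsilon}]\Phi_X\bigr)_i \;\leq\; -1 + O(N^{-(l-1)/d}) \;\leq\; -\tfrac{1}{2},
\]
for $N$ sufficiently large, where $\Phi_X = \mathcal{R}_X \phi$. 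Setting $c = 2\|\mathbf{r}\|_\infty$ and $\mathbf{w}^\pm = \pm \mathbf{e} + c\,\Phi_X$, the row sum structure then gives $[\hat{L}_{X_\epsilon},\hat{L}_{X\setminus X_\epsilon}]\mathbf{w}^\pm \leq \|\mathbf{r}\|_\infty - \tfrac{c}{2} = 0$ on $X_\epsilon$. Because the linear optimization produced $C=0$, the row vectors are diagonally dominant with the sign pattern required by Lemma~\ref{dis_max_prin}, so Corollary~\ref{cor:min} applies: the minimum of $\mathbf{w}^\pm$ over $\overline{X_\epsilon}$ is attained at some point of $\mathbb{X}_{K,\epsilon}\subset X\setminus X_\epsilon$.

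Evaluating this minimum bound at any $\mathbf{x}_j \in \mathbb{X}_{K,\epsilon}$ uses $\phi(\mathbf{x}_j)\geq 0$ and $|\mathbf{e}(\mathbf{x}_j)| = |\mathbf{u}_2(\mathbf{x}_j)| \leq \|\mathbf{u}_2\|_\infty$, giving $\mathbf{w}^\pm_j \geq -\|\mathbf{u}_2\|_\infty$. Propagating this inequality back to any interior index $i$ yields $\pm\mathbf{e}_i \leq c\|\phi\|_\infty + \|\mathbf{u}_2\|_\infty$, hence
\[
\|\mathbf{e}_1\|_\infty \;\leq\; 2\|\mathbf{r}\|_\infty\|\phi\|_\infty + \|\mathbf{u}_2\|_\infty \;=\; O(N^{-(l-1)/d}) + O(N^{-1/d}).
\]
Combined with the bound on $\mathbf{e}_2$, this gives the claimed rate. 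The probability accounting is a union bound over the two fill/separation-distance events feeding Lemma~\ref{consistency} for $u$ (contributing $\tfrac{2}{N}$), the analogous events for the GMLS approximation of the barrier $\phi$ (contributing another $\tfrac{2}{N}$), and the event for the boundary-strip estimate $\|\mathbf{u}_2\|_\infty = O(h_{X,M})$ (contributing $\tfrac{1}{N}$), for a total of $1-\tfrac{5}{N}$.

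The main obstacle is constructing, and verifying the GMLS-consistency for, the barrier $\phi$: on a general compact manifold with boundary one must argue that a smooth $\phi\geq 0$ with $\Delta_M\phi\leq -1$ exists and extends to $M^*$ with bounded $C^{l+1}$ seminorm so that \eqref{MLS_error} applies with a constant independent of $N$. A secondary subtlety is that the choice $\epsilon\sim h_{X,M}$ made in Lemma~\ref{consistency} does not \emph{a priori} guarantee that the set $X\setminus X_\epsilon$ coincides with the set $X\setminus Y$ detected by $w_1\geq 0$ (cf.\ Remark~\ref{rem4.3}); the theorem as stated is for the $X_\epsilon$ partition, so this issue does not appear in the proof, but one should be careful not to conflate the two partitions when invoking the discrete maximum principle.
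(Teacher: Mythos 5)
Your proposal is correct and follows essentially the same route as the paper: a nonnegative barrier with $\Delta_M$-image bounded away from zero, combined with the consistency residual from Lemma~\ref{consistency} and the discrete maximum principle of Corollary~\ref{cor:min}, with the same $5/N$ probability accounting. The ``main obstacle'' you flag is resolved in the paper by taking the barrier to be the solution $v$ of $\Delta_M v=-1$ on $\textup{int}(M)$ with $v|_{\partial M}=0$, whose existence follows from the assumed well-posedness of the Dirichlet problem and whose nonnegativity follows from the maximum-principle hypothesis in the theorem statement (the paper simply asserts $v\in C^\infty(M)$ so that \eqref{errorrate} applies), and the paper's extra constant shift $\Phi\mathbf{I}$ is algebraically equivalent to your carrying $-\|\mathbf{u}_2\|_\infty$ on the right of the minimum inequality since $[\hat{L}_{X_\epsilon},\hat{L}_{X\backslash X_\epsilon}]\mathbf{I}=\mathbf{0}$.
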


\begin{proof}
Let $v\in C^{\infty }(M)$ that solves $\Delta _{M}v=-1$ for all $\mathbf{x}\in \mathrm{int}(M)$ and $v|_{\mathbf{x}\in \partial M}=0$. Here, the existence of the unique solution $v$ follows from the well-posedness
assumption of the Dirichlet problem. By maximum principle, $\min_{\overline{M}}v=\min_{\partial M}v=0$, that is, $v\geq 0$ for all $\mathbf{x}\in \overline{M}$. Denote $\mathbf{v}=\mathcal{R}_{X}v\in \mathbb{R}^{N}$. Using
the estimation in \eqref{errorrate}, for any $i$ with $\mathbf{x}_{i}\in X_{\epsilon }$, we have $\left\vert ([\hat{L}_{X_{\epsilon }},\hat{L}_{X\backslash X_{\epsilon }}]\mathbf{v})_{i}-\Delta _{M}v(\mathbf{x}_{i})\right\vert \leq c_{v}N^{-\frac{l-1}{d}}\leq \frac{1}{2},$ for sufficiently large enough $N$, with probability higher than $1-\frac{2}{N}$. That is,
\begin{equation}
([\hat{L}_{X_{\epsilon }},\hat{L}_{X\backslash X_{\epsilon }}]\mathbf{v}%
)_{i}\leq \Delta _{M}v(\mathbf{x}_{i})+\frac{1}{2}=-\frac{1}{2}.
\label{eqn:bv}
\end{equation}%
Using the estimation in \eqref{errorrate} again, for any $i$ with $\mathbf{x}%
_{i}\in X_{\epsilon }$,
\begin{equation}
\left\vert\left\{[\hat{L}_{X_{\epsilon }},\hat{L}_{X\backslash X_{\epsilon }}](\mathbf{U}-%
\mathbf{u})\right\}_{i}\right\vert
=
\left\vert f(\mathbf{x}_{i})-([\hat{L}_{X_{\epsilon }},\hat{L}_{X\backslash X_{\epsilon }}]\mathbf{u})_{i} \right\vert \leq c_{u}N^{-\frac{l-1}{d}},
\label{eqn:bu}
\end{equation}%
with probability higher than $1-\frac{2}{N}$. Let $F=2c_{u}N^{-\frac{l-1}{d}}
$ and $\Phi =\Vert \mathbf{u}_{2}\Vert _{\infty }$ be constant, and
construct a vector,%
\begin{equation*}
\mathbf{W}=F\mathbf{v}+\Phi \mathbf{I}\pm (\mathbf{U}-\mathbf{u})\in \mathbb{%
R}^{N},
\end{equation*}%
where $\mathbf{I}$ is the vector with all entries are $1$. Using %
\eqref{eqn:bv} and \eqref{eqn:bu} and noticing that $[\hat{L}_{X_{\epsilon
}},\hat{L}_{X\backslash X_{\epsilon }}]\mathbf{I=0}$, we have
\begin{eqnarray*}
\lbrack \hat{L}_{X_{\epsilon }},\hat{L}_{X\backslash X_{\epsilon }}]\mathbf{W%
} &=&F[\hat{L}_{X_{\epsilon }},\hat{L}_{X\backslash X_{\epsilon }}]\mathbf{v}%
\pm \lbrack \hat{L}_{X_{\epsilon }},\hat{L}_{X\backslash X_{\epsilon }}](%
\mathbf{U}-\mathbf{u}) \\
&\leq &-\frac{1}{2}F+c_{u}N^{-\frac{l-1}{d}}=0.
\end{eqnarray*}%
Notice that $\mathbf{v\geq 0}$ and $\mathbf{U}_{2}=\mathbf{0}$, then we have%
\begin{equation*}
\mathbf{W|}_{X\backslash X_{\epsilon }}=[F\mathbf{v}+\Phi \mathbf{I}\pm (%
\mathbf{U}-\mathbf{u})]\mathbf{|}_{X\backslash X_{\epsilon }}\geq \Phi \mathbf{I}\pm
\mathbf{u}_{2}\geq 0.
\end{equation*}%
This also suggests $\mathbf{W|}_{\mathbb{X}_{K,\epsilon }}\geq 0$ since $%
\mathbb{X}_{K,\epsilon }\subset X\backslash X_{\epsilon }$. Then, the discrete
maximum principle in Corollary~\ref{cor:min} gives us
\begin{equation*}
\min_{j:\mathbf{x}_{j}\in \overline{X_{\epsilon }}}[F\mathbf{v}+\Phi \mathbf{%
I}\pm (\mathbf{U}-\mathbf{u})]_{j}=\min_{j:\mathbf{x}_{j}\in \overline{%
X_{\epsilon }}}\mathbf{W}_{j}=\min_{\ell :\mathbf{x}_{\ell }\in \mathbb{X}%
_{K,\epsilon }}\mathbf{W}_{\ell }\geq 0.
\end{equation*}%
Thus,
\begin{equation*}
\Vert (\mathbf{U}-\mathbf{u})|_{\overline{X_{\epsilon }}}\Vert _{\infty
}\leq \Vert F\mathbf{v}+\Phi \mathbf{I}\Vert _{\infty }\leq 2c_{u}N^{-\frac{%
l-1}{d}}\Vert \mathbf{v}\Vert _{\infty }+\Vert \mathbf{u}_{2}\Vert _{\infty
}=O\big( N^{-\frac{l-1}{d}}\big) +O\big(N^{-\frac{1}{d}}\big),
\end{equation*}%
with probability higher than $1-\frac{5}{N}$, where we have used Lemma~\ref{consistency} in the last inequality for $\mathbf{u}%
_{2}$. Last, we consider the bound for $\mathbf{U}-\mathbf{u}$\ on the set $%
X\backslash \overline{X_{\epsilon }}=X\backslash (X_{\epsilon }\cup \mathbb{X%
}_{K,\epsilon })$, containing the points that are not connected to any
interior points in $X_{\epsilon }$\ through $K$ nearest neighbors. Notice
that $X\backslash \overline{X_{\epsilon }}\subset X\backslash X_{\epsilon
}\subset M\backslash M_{\epsilon }$, so that
\begin{equation*}
\Vert (\mathbf{U}-\mathbf{u})|_{X\backslash \overline{X_{\epsilon }}}\Vert
_{\infty }\leq \Vert (\mathbf{U}-\mathbf{u})|_{X\backslash X_{\epsilon
}}\Vert _{\infty }=\Vert \mathbf{u}_{2}\Vert _{\infty }=O(N^{-\frac{1}{d}}),
\end{equation*}%
where we again use Lemma~\ref{consistency}. Collecting the probability, the
proof is complete.
\end{proof}

As we discussed in Remark~\ref{rem4.3}, the error bound of the PDE solution will be dominated by the error induced by the volume-constraint approach, where the error is proportional to the truncation distance to the boundary, which we identify as $\epsilon>0$. Choosing $\epsilon\sim h_{X,M}$, this error will dominate if {\color{black}the least squares approximation} is employed with polynomials of degree-$l>3$.
The choice of $\epsilon\sim h_{X,M}$ also makes sense as it is an upper bound of the separation distance $q_{X,M}$ since if we choose $\epsilon$ to be less than the separation distance, then we are most likely not truncating any points.

\subsubsection{Numerical verification of the volume-constraint approach}

%We consider solving the Poisson problem in (\ref{diri_pde}) on a semi-torus  which has the embedding function
%$$
%\iota(\theta, \phi)=\left(\begin{array}{c}{(R+r\cos \theta) \cos \phi} \\ {(R+r\cos \theta) \sin \phi} \\ {r\sin \theta}\end{array}\right), \quad \theta \in[0,2 \pi],\quad \phi\in[0,\pi]
%$$
%where $R$ is the distance from the centre of the tube to the center of torus and $r$ is the distance from the center of the tube to the surface of the tube with $r<R$. The induced Riemannian metric is
%$$
%g_{\textbf{x}^{-1}(\theta, \phi)}(v, w)=v^{\top}\left(\begin{array}{cc}r^2 & 0 \\ 0 & (R+r\cos \theta)^{2}\end{array}\right) w, \quad \forall v, w \in T_{\textbf{x}^{-1}(\theta, \phi)} M.
%$$
%In our numerical experiments, we take $R=2$ and $r=1$. The true solution $u$ of the PDE is set to be $u(\theta,\phi)=\sin\phi\sin\theta$. The forcing term $f$ could be calculated as
%$$
%f:=\Delta_M u=\frac{1}{\sqrt{|g|}}\left[\frac{\partial}{\partial \theta}\left(\sqrt{|g|} g^{11} \frac{\partial u}{\partial \theta}\right)+\frac{\partial}{\partial \phi}\left(\sqrt{|g|} g^{22} \frac{\partial u}{\partial \phi}\right)\right].
%$$

In Fig.~\ref{figtruncate}(b), we numerically demonstrate the convergence rate in solving the Dirichlet problem in \eqref{diri_pde} on a two-dimensional semi-torus embedded in $\mathbb{R}^3$ (see more details about the manifold and the PDE parameters in the following Example~\ref{sec5.3}). As the theory suggested, we expect an error rate of order-$N^{-\frac{1}{2}}$ in this two-dimensional example. The proposed approach, which employs the volume-constraint method by solving the linear problem \eqref{diri_linproblem} induced by restricting the PDE on $Y$ as defined in \eqref{setY} gives more accurate inverse error relative to applying the volume-constraint by restricting the PDE to $X_\epsilon$, where the distance $\epsilon$ is chosen to be proportional to $\epsilon^*$ defined in \eqref{eps_alg}. Setting the scaling $\alpha$ in  $\epsilon = \alpha\epsilon^*$ to be too large reduces the accuracy. On the other hand, when $\alpha$ is too small, we empirically cannot see the convergence rate possibly due to inaccurate results induced by too small of truncation distance (possibly below the separation distance). As we noted right after \eqref{eps_alg}, restricting on $X_{\epsilon^*}$ is not the same as restricting on the set $Y$ since there are possibly other points in $X\backslash X_{\epsilon^*}$ with $w_1<0$. So, restricting the PDE on $X_\epsilon^*$ yields a truncation of (possibly) more points than only restricting on the set $Y$. Besides the accuracy and consistent convergence rate, an important aspect of the proposed approach that only solving the PDE on the set $Y$ is that it does not require one to know or to estimate the distance of each data point from the boundary, which is significantly advantageous compared to the $\epsilon$-distance based approach that required such information.

\begin{figure*}[htbp]
{\scriptsize \centering
\begin{tabular}{cc}
{\normalsize (a) Sketch } & {\normalsize (b) convergence }  \\
\includegraphics[width=3in, height=2 in]{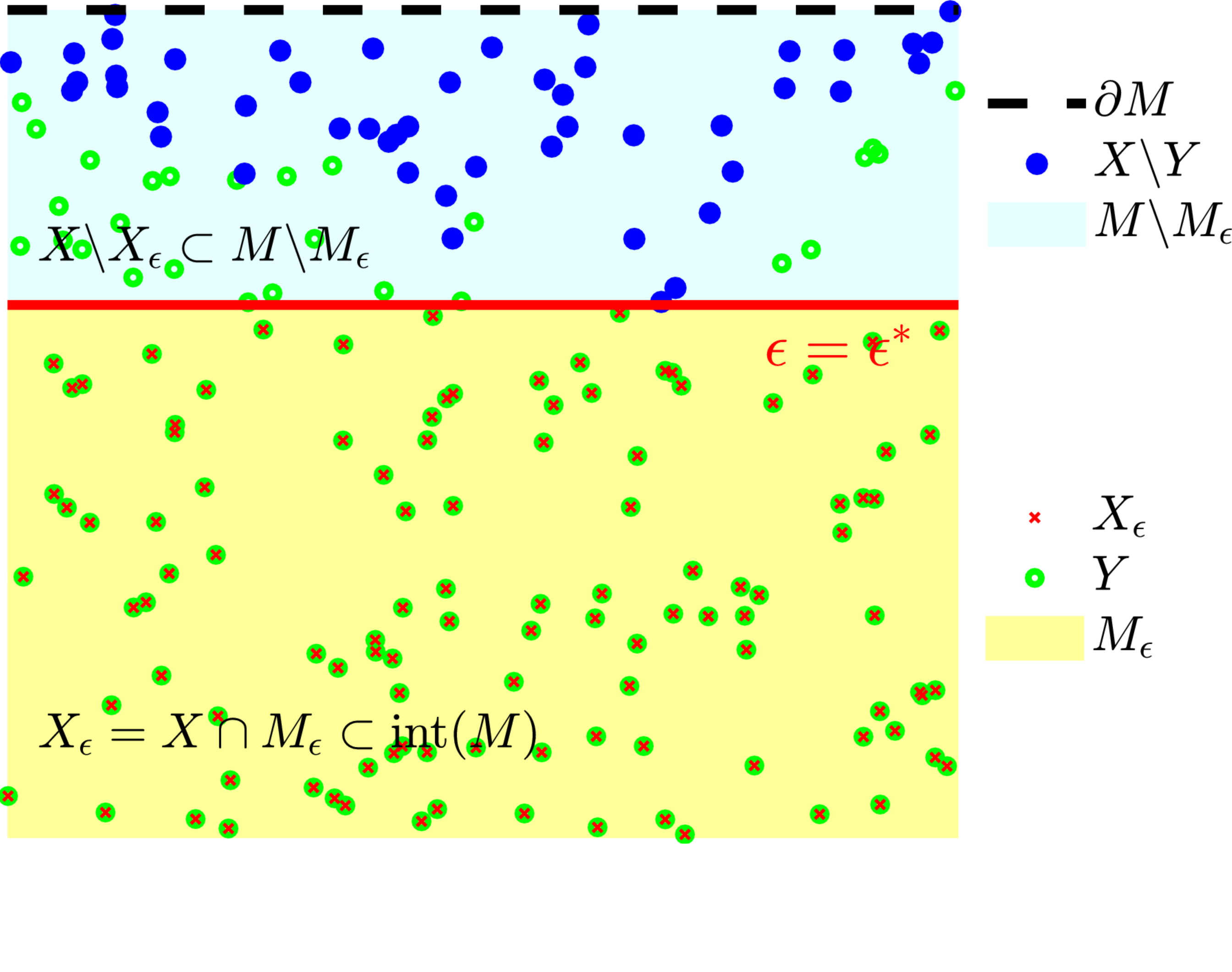} &
\includegraphics[width=3in, height=2.2 in]{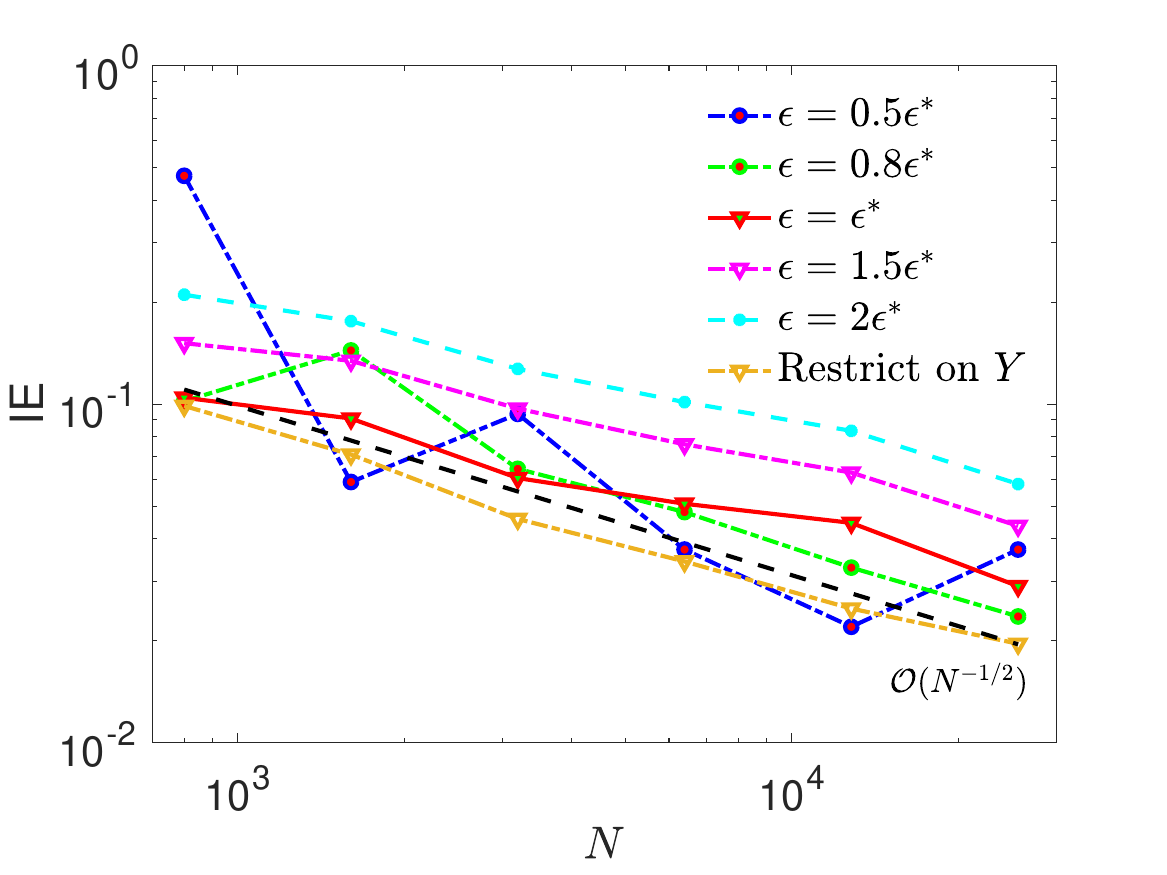} % truncation in Euclidean distance
\end{tabular}
}
%\centering
%\includegraphics[width=3in, height=2 in]{figures/boundary_sketch_v2.png}
%%\includegraphics[width=3in, height=2.2 in]{figures/semi_torus_truncation_v2.eps} %% truncation in intrinsic
%\includegraphics[width=3in, height=2.2 in]{figures/semi_torus_truncation_v2.eps} % truncation in Euclidean distance
\caption{(a) Sketch of the volume-constraint approach. (b) \textbf{2D semi-torus in $\mathbb{R}^3$}. Inverse errors (IE) based on volume-constraint approach: Comparison of truncating based on the proposed algorithm, restriction on $Y$ as defined in \eqref{setY}, and the $\epsilon$-distance based approach, where we set $\epsilon$ to be proportional to $\epsilon^*:= \max_{\mathbf{x}_i \in X\backslash Y} d_g(\mathbf{x}_i, \partial M)$ as defined in \eqref{eps_alg}.}
\label{figtruncate}
\end{figure*}

\section{Numerical experiments}\label{numerics}

%{\color{red}In the next Section~\ref{numerics}, we will present examples on more complicated domains for unknown manifold setups where we will use estimated tangent spaces associated with estimated projection matrices $\mathbf{\hat{P}}$ for the substitution of the analytic $\mathbf{P}$ in Algorithm \ref{algo:local-LS}.}

In this section, we will numerically verify the performance of the stable approximation obtained by solving the linear programming in \eqref{eqn:linear_optm}-\eqref{eqn:linear_const} constrained on the intrinsic polynomial least squares approximation proposed in Section \ref{Sec:int_LS}. While the theoretical results in the previous section assume that the projection matrix $\mathbf{P}$ at every point cloud data or the tangent vectors $\{\mathbf{t}_{\mathbf{x}_i,j}\}_{i=1,\ldots, N,j=1,\ldots,d}$ are given, the numerical results in this section assume they are unknown and use in Algorithm \ref{algo:local-LS} the approximate tangent vectors and projection matrices $\mathbf{\hat{P}}$ obtained from the second-order local SVD method \cite{harlim2022rbf} with $K_\mathbf{\hat{P}}$-nearest neighbor points. Such an estimate is needed especially when the manifolds are unknown as we demonstrate below with the "Stanford bunny" and "face" surfaces.

\comment{To quantify the consistence of the approximation, we define the forward error (\textbf{FE}) as
\BEA
\textbf{FE}=\max\limits_{1\leq i\leq N}|\Delta_Mu(\textbf{x}_i)-(\mathbf{L}\textbf{u})_i|,
\EEA
{\color{blue} where $\mathbf{u} = (u(\mathbf{x}_1),\ldots,u(\mathbf{x}_N))$ and $\textbf{L}$ is the discrete Laplacian matrix obtained by the linear programming in \eqref{eqn:linear_optm}-\eqref{eqn:linear_const}.} Denoting $\mathbf{U}$ as the estimated solution over $X$, the inverse error (\textbf{IE}) is defined as
\BEA
\textbf{IE}=\Vert \mathbf{u}-\mathbf{U}\Vert_\infty,
\EEA
a metric to quantify the convergence of the PDE solution. {\color{blue}For simple manifolds (tori in our examples), $u$ will be specified as the analytical solution. For unknown manifold examples, where analytical solution is not available, we will identify $u$ as the approximate solution obtained by surface Finite Element Method (FEM).}}

%For diagnostic, we compare the proposed method to the following local approach, the RBF-generated finite difference type PDE solver:

We will organize the discussion as follows: In Sections~\ref{sec5.1}-\ref{sec5.2}, we report results on closed surfaces, a 2D torus embedded in $\mathbf{R}^9$ and the Stanford Bunny, respectively. For diagnostic, we compare the proposed method to a local approach, the RBF-generated finite difference (RBF-FD) type PDE solver \cite{shankar2015radial}, for manifolds without boundary. In Section~\ref{sec5.3}-\ref{sec5.4}, we report results on a Dirichlet problem on a half torus embedded in $\mathbb{R}^3$ and an unknown "face" manifold, respectively.
For diagnostic, we compare the proposed method to a Graph-Laplacian-based method, Variable-Bandwidth Diffusion Maps (VBDM) \cite{bh:16vb} PDE solver for manifolds with boundary. See Appendix~\ref{app:C} for the details of RBF-FD and VBDM methods.

\comment{
\subsection{Closed manifolds}\label{Sec:closed_m}
In this section, we consider solving the linear elliptic PDE,
\BEA
(a-\Delta _{M})u=f.
\label{eqn:elliptic_PDE}
\EEA
on closed manifolds $M$.
The discrete solution $\textbf{U}$ over $X$ could be calculated as
$$
\textbf{U}=(\textbf{a}-\textbf{L})^{-1}\textbf{f}.
$$
{\color{blue}where $\textbf{a}$ denotes a diagonal matrix with diagonal components $a_{ii} = a(\textbf{x}_i)>0$ and $\textbf{L}$ is the discrete Laplacian matrix obtained by the linear programming in \eqref{eqn:linear_optm}-\eqref{eqn:linear_const}.}

{\color{red}I feel the above is repetitive. Once section 3.2 is cleaned, maybe it will be there at the end.}}

\subsection{Torus in high dimension}\label{sec5.1}
First, we investigate the solver for the elliptic PDE in \eqref{closed_pde} on a general torus. For $q\in\mathbb{N}$, the parameterization of a general torus in $\mathbb{R}^{2q+1}$ is
given by
\begin{equation}
\textbf{x}=\left[
\begin{array}{c}
x^{1} \\
x^{2} \\
\vdots \\
x^{2q-1} \\
x^{2q} \\
x^{2q+1}%
\end{array}%
\right] =\left[
\begin{array}{c}
(c_{0}+\cos \theta )\cos \phi \\
(c_{0}+\cos \theta )\sin \phi \\
\vdots \\
\frac{1}{q}(c_{0}+\cos \theta )\cos q\phi \\
\frac{1}{q}(c_{0}+\cos \theta )\sin q\phi \\
\sqrt{\sum_{i=1}^{q}\frac{1}{i^{2}}}\sin \theta%
\end{array}%
\right] ,  \label{eqn:gentor}
\end{equation}%
where the two intrinsic coordinates $0\leq \theta ,\phi \leq 2\pi $ and the
radius is chosen as $c_{0}=2>1$. The Riemannian metric is
\begin{equation}
g=\left[
\begin{array}{cc}
{\sum_{i=1}^{q}\frac{1}{i^{2}}}& 0 \\
0 & q(c_{0}+\cos \theta )^{2}%
\end{array}%
\right].  \label{eqn:metgentors}
\end{equation}%
The
Laplace-Beltrami operator acting on a function can be computed as,%
\begin{equation}
\Delta _{M}u=\frac{1}{(c_{0}+\cos \theta )}\left[ \frac{\partial }{\partial
\theta }\left( \left( c_{0}+\cos \theta \right) \frac{1}{\sum_{i=1}^{q}\frac{1}{i^{2}}}%
\frac{\partial u}{\partial \theta }\right) +\frac{\partial }{\partial \phi }%
\left( \frac{1}{q(c_{0}+\cos \theta )}\frac{\partial u}{\partial
\phi }\right) \right] .  \label{eqn:psigento}
\end{equation}

Numerically, the grid points $\{\theta_i,\phi_i\}$ are randomly sampled from the uniform distribution on $[0,2\pi)\times[0,2\pi)$. The constant $a$ in (\ref{closed_pde}) is chosen to be $1$. In our numerical test, we consider the torus in $\mathbb{R}^9$, i.e., $q=4$. The true solution is set to be $u(\theta,\phi)=\sin(\theta)\sin(\phi)$ and the manufactured $f$ is calculated in local coordinates by $f:=(a-\Delta_M)u$ using the above formula.

\begin{figure*}[htbp]
{\scriptsize \centering
\begin{tabular}{cc}
{\normalsize (a) \textbf{FE} of $\Delta_M$} & { \normalsize (b) \textbf{IE} of solution }
\\
\includegraphics[width=3in, height=2.2 in]{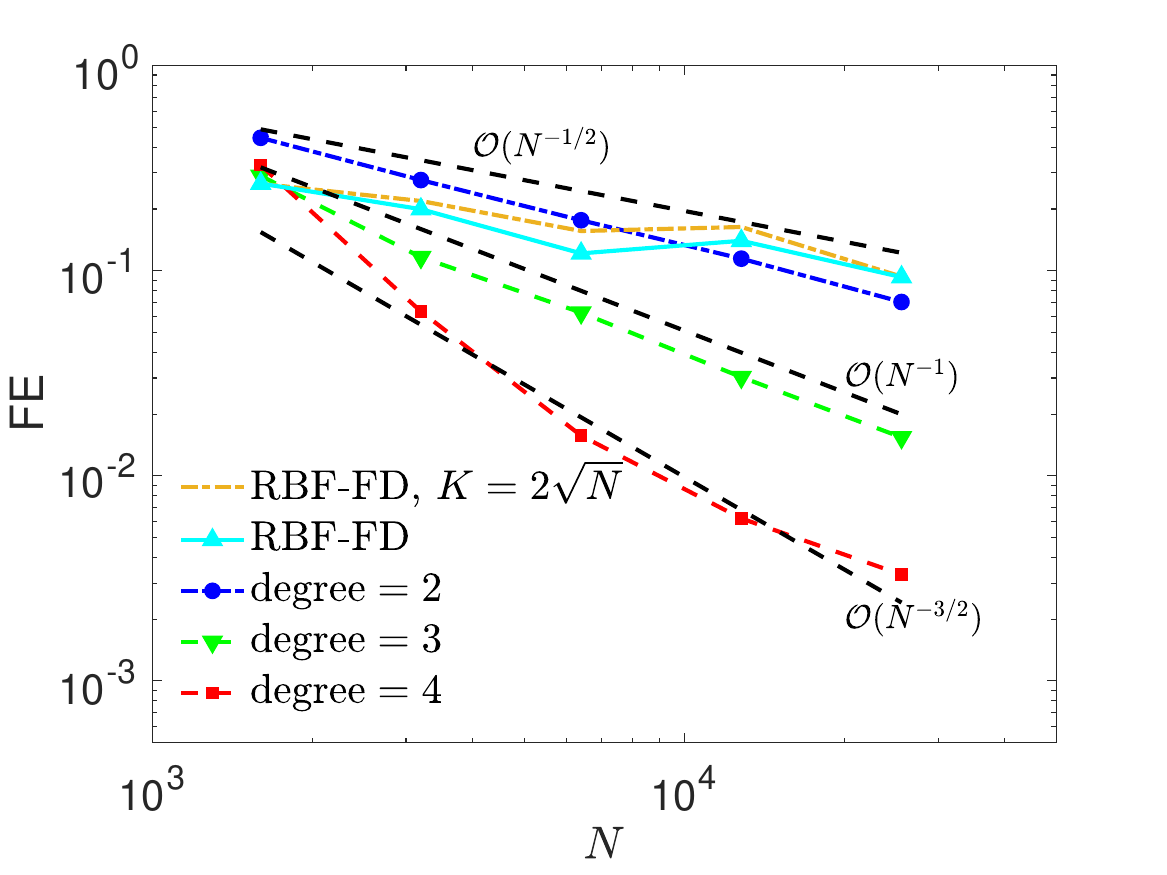} & %with RBF_FD P_tilde, s=0.5
\includegraphics[width=3in, height=2.2 in]{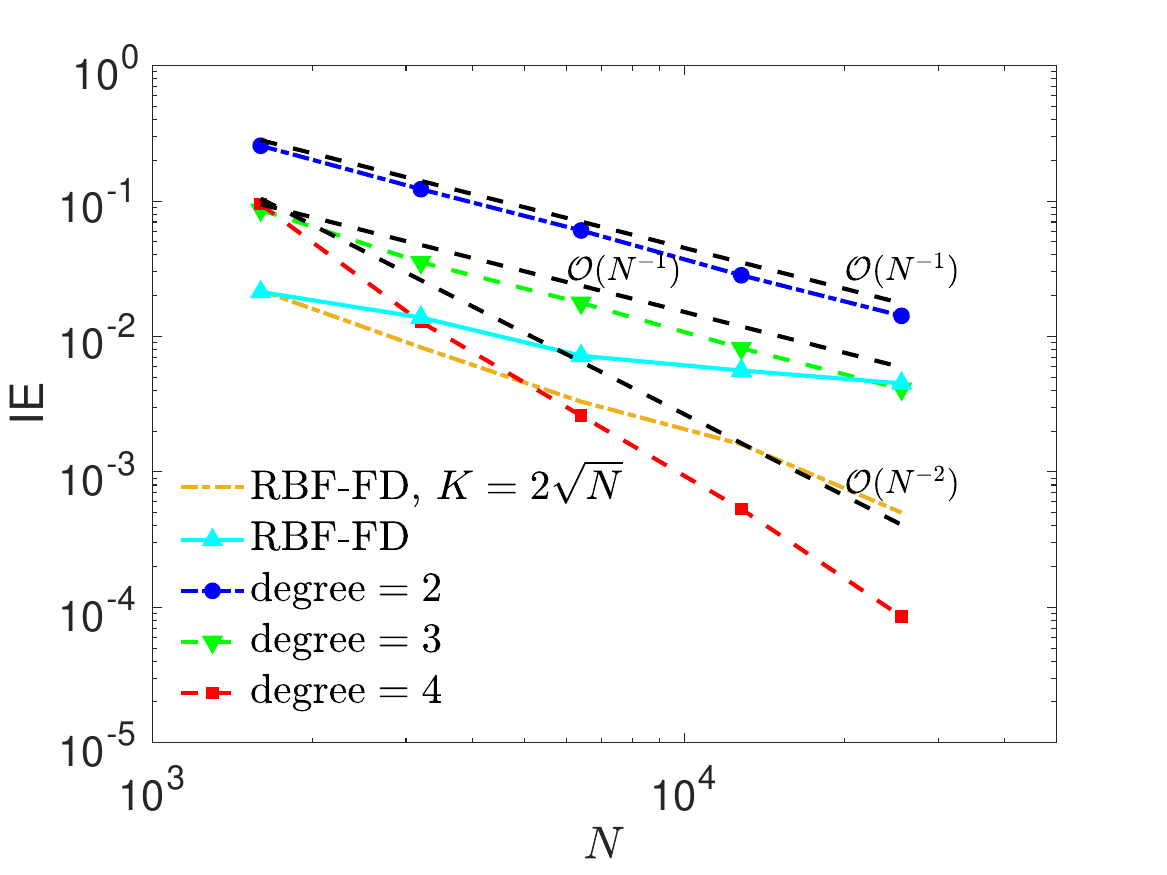}%with RBF_FD P_tilde, s=0.5
\end{tabular}
}
\caption{\textbf{2D general torus in $\mathbb{R}^9$}. (a) \textbf{FE} of $\Delta_M$. (b) \textbf{IE} of solutions of the elliptic PDE. }
\label{fig_general_torus}
\end{figure*}

In Fig.~\ref{fig_general_torus}, we show the average forward error (\textbf{FE}) and inverse error (\textbf{IE}) over 6 independent trials {\color{black}(each trial with a randomly sampled $X$)} as functions of $N$. To illustrate the convergence of solutions over the number of points $N=[1600,3200,6400,12800,25600]$, we fix $K=41$ nearest neighbors in the intrinsic-polynomial least squares approximation.
The projection matrix, $\mathbf{P}$, is estimated using the 2nd-order local SVD scheme \cite{harlim2022rbf} with $K_\mathbf{\hat{P}}=2\sqrt{N}$.

One can see that the \textbf{FE} using degree $l=2,3,4$ decrease roughly on the order of $N^{-(l-1)/2}$ which is consistent with the error bound in (\ref{errorrate}). For the inverse error of the solution, the converge order for $l=2$ and $l=3$ is the same, roughly equal to $N^{-1}$, and for $l=4$, is roughly equal to $N^{-2}$. Interestingly, the accuracy and the super-convergence rate observed here are competitive compared to the numerical experiment that uses data points exactly on the boundary (see Appendix~\ref{app:B}). As for the RBF-FD, the \textbf{IE} decays much slower than the proposed methods if we employ it with the same sparsity with $K = 41$. On the other hand, if we choose a denser implementation with $K=2\sqrt{N}$ in RBF-FD, the \textbf{IE} could have much smaller errors and decay slightly faster than $N^{-1}$, but still less accurate compare to using polynomials of degree-4 with $K=41$.

\subsection{On "Stanford bunny"}\label{sec5.2}
In this example, we consider solving the elliptic problems on the Stanford Bunny model which is a two-dimensional surface embedded in $\mathbb{R}^3$. The data of this model is downloaded from the Stanford 3D Scanning Repository \cite{bunny}. The original data set of the Stanford Bunny comprises a triangle mesh with 34,817 vertices. Since this data set has singular regions at the bottom, we generate a new mesh of the surface using the Marching Cubes algorithm~\cite{lorensen1987marching} that is available through the Meshlab~\cite{cignoni2008meshlab}. We should point out that the Marching Cubes algorithm does not smooth the surface. We will verify the solution on a smoothed surface, following the numerical work in \cite{shankar2015radial}. Specifically, we smooth the surface using the Screened Poisson surface reconstruction algorithm to generate a watertight implicit surface that fits the point cloud of the bunny. Subsequently, we apply the Poisson-disk sampling algorithm through the Meshlab to generate a point cloud of $N=32000$ points over the surface. Then a new mesh could be obtained by using the Marching Cubes algorithm again.

We solve the PDE problem in \eqref{closed_pde} with $a=0.2$ and $f=0.6(x_1+x_2+x_3)$. In this example, we have no access to the analytic solution due to the unknown embedding function. For comparison, we take the surface finite-element method solution obtained from the FELICITY FEM Matlab toolbox \cite{walker2018felicity} as the reference (see Fig.~\ref{bunny_soln}(a)). In Fig.~\ref{bunny_soln}(b), we report the result using the intrinsic polynomial approximation with degree $l=2$ and $K=15$ nearest neighbors. To estimate the projection matrix at each point, we fix $K_\mathbf{\hat{P}}=12$ in the 2nd-order local SVD scheme (see \cite{harlim2022rbf}). We found that the absolute difference between the FEM solution and the proposed method ($\textbf{IE} = 0.0190$ in Fig.~\ref{bunny_soln}(b)) is much smaller than that of FEM solution and RBF-FD solution ($\textbf{IE} = 0.1270$) that uses the same $K=15$ (not shown here). While similar maximum absolute error ($\textbf{IE} = 0.0218$) can be obtained for RBF-FD with a larger choice of $K=43$ (Fig.~\ref{bunny_soln}(c)), the solution is spatially not that accurate as one can see from Fig.~\ref{bunny_soln}(c). We found that RBF-FD can be spatially more accurate ($\textbf{IE} = 0.0116$) with a larger $K=61$ (not shown here).

%In Fig.~\ref{bunny_soln}(c) and Fig.~\ref{bunny_soln}(d), we show the result of RBF-FD where we choose the shape parameter $s=0.2$.

From the numerical results in this (and the previous) subsection, we conclude that the proposed technique with local polynomials can be more accurate than RBF-FD when both schemes are employed with the same memory allocation (the same degree of sparseness).

\begin{figure*}[htbp]
{\scriptsize \centering
\begin{tabular}{ccc}
{\normalsize (a) FEM solution} & { \normalsize (b) $l=2$, $\textbf{IE} = 0.0190$, $K=15$ } & { \normalsize (c) RBF-FD, $\textbf{IE} =0.0218$, $K=43$ }
\\
\includegraphics[width=1.95in, height=1.3 in]{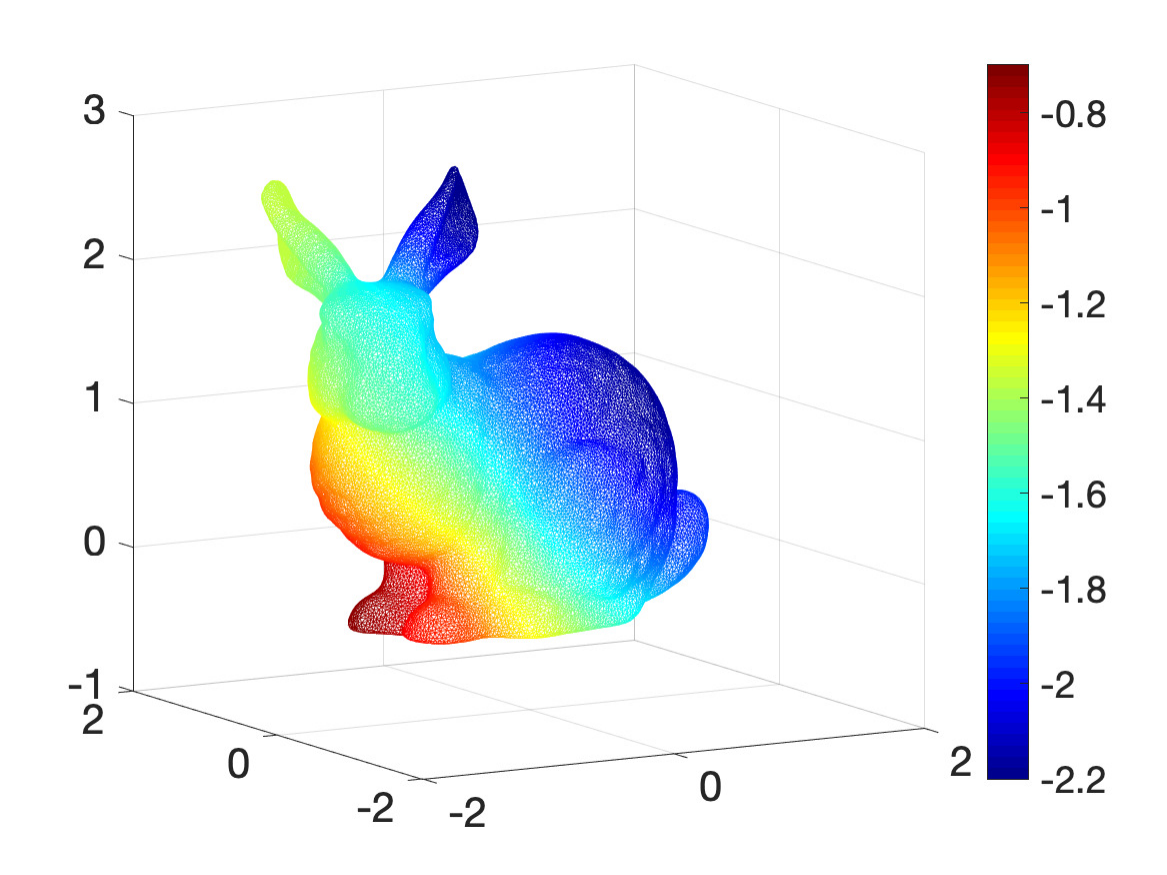} &
\includegraphics[width=1.95in, height=1.3 in]{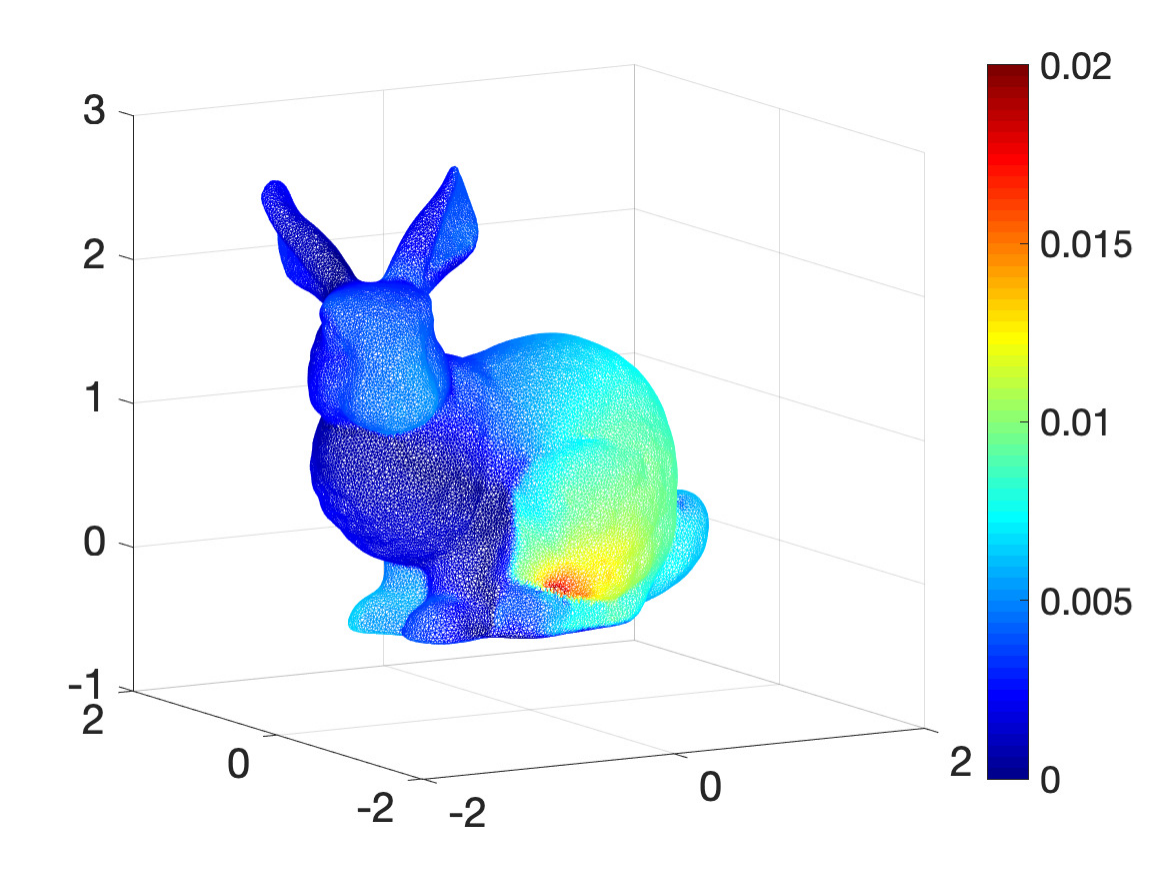} &
\includegraphics[width=1.95in, height=1.3 in]{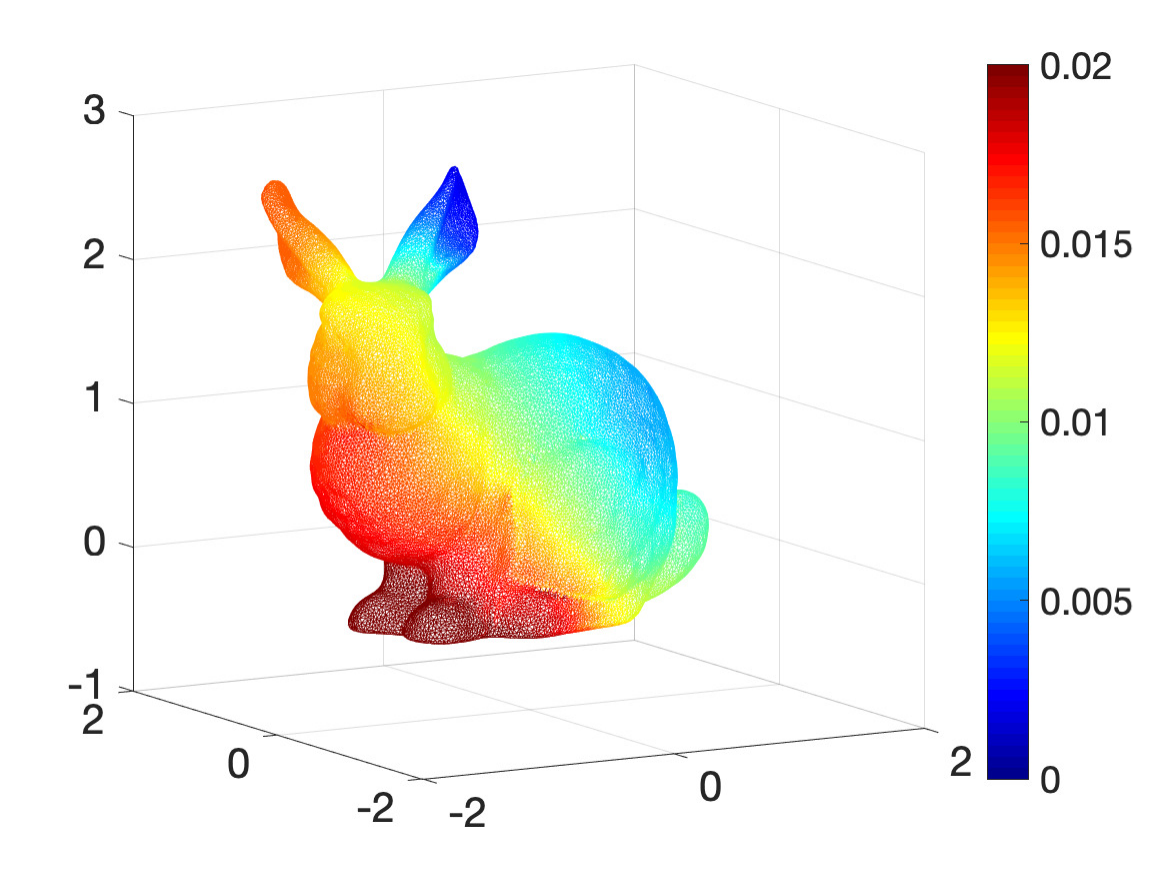}
\end{tabular}
}
\caption{{\bf Bunny example.} $N=32000$. (a) FEM solution. (b) Absolute difference at each point between FEM and intrinsic polynomial with degree $l=2,K=15$. 
%(c) Absolute difference at each point between FEM and RBF-FD with $s=0.2, K=15$. 
(c) Absolute difference at each point between FEM and RBF-FD with $s=0.2, K=43$ (see the shape parameter $s$ in Appendix~\ref{app:C}). }
\label{bunny_soln}
\end{figure*}

\comment{
\subsection{On manifolds with Dirichlet boundary condition}\label{Sec:boundary_m}
In this section, we consider solving the linear elliptic PDE on closed manifolds:
\BEA
-\Delta _{M}u=f,\quad\text{on }M
\label{eqn:elliptic_PDE_Diri}
\EEA
with Dirichlet boundary condition $u(x)=0,x\in\partial M$.

As in the last section, we denote $\textbf{L}$ the discrete Laplacian matrix obtained by intrinsic polynomial least squares approximation with linear programming. Let $\mathbf{U}_B,\mathbf{U}_I$ denote the function values of $u$ at boundary points $\textbf{x}_B$ and interior points $\textbf{x}_I$, respectively. Then the discrete solution $\textbf{U}$ over $X$ could be found by solving
$$
(\textbf{L}\textbf{U})_I=\textbf{f}_I.
$$
with values at boundary points $\textbf{U}_B=\textbf{0}$.
}

\subsection{Semi-torus}\label{sec5.3}

{\color{black}
In this example, we consider solving the Poisson problem in (\ref{diri_pde}) on a semi-torus  which has the embedding function
$$
\iota(\theta, \phi)=\left(\begin{array}{c}{(R+r\cos \theta) \cos \phi} \\ {(R+r\cos \theta) \sin \phi} \\ {r\sin \theta}\end{array}\right), \quad \theta \in[0,2 \pi),\quad \phi\in[0,\pi]
$$
where $R$ is the distance from the centre of the tube to the center of torus and $r$ is the distance from the center of the tube to the surface of the tube with $r<R$. The induced Riemannian metric is
$$
g_{\textbf{x}^{-1}(\theta, \phi)}(v, w)=v^{\top}\left(\begin{array}{cc}r^2 & 0 \\ 0 & (R+r\cos \theta)^{2}\end{array}\right) w, \quad \forall v, w \in T_{\textbf{x}^{-1}(\theta, \phi)} M.
$$
In our numerical experiments, we take $R=2$ and $r=1$. The true solution $u$ of the PDE is set to be $u(\theta,\phi)=\sin\phi\sin\theta$. The forcing term $f$ could be calculated as
$$
f:=\Delta_M u=\frac{1}{\sqrt{|g|}}\left[\frac{\partial}{\partial \theta}\left(\sqrt{|g|} g^{11} \frac{\partial u}{\partial \theta}\right)+\frac{\partial}{\partial \phi}\left(\sqrt{|g|} g^{22} \frac{\partial u}{\partial \phi}\right)\right].
$$
}

Numerically, the grid points $\{\theta_i,\phi_i\}$ are randomly sampled from the uniform distribution on $[0,2\pi)\times[0,\pi]$.  To illustrate the convergence of solutions over number of points $N=[800,1600,3200,6400,12800,25600]$, we fix $K=51$ in the intrinsic polynomial approximation.  As discussed in the previous section~\ref{sec:comunk}, we assume that no boundary points are given and the boundary location is unknown. We identify the boundary points close to boundary as those with $w_1>0$ (as defined in Section~\ref{sec:comunk}) and approximate the solution by solving the linear problem in \eqref{diri_linproblem}.

In Fig.~\ref{fig_semi_torus}, we show the average \textbf{FE} and \textbf{IE} over 6 independent trials as a function of $N$. One can see that for the forward error of $\Delta_M$, the error decreases roughly on the order of $N^{-1/2}$ when the degree of $l=2$ intrinsic polynomial is implemented. On the other hand, the \textbf{FE} decreases on the order of $N^{-1}$ with the intrinsic polynomial of degree $l=3$. For the \textbf{IE} of solutions, the convergence rates for $l=2$ and $l=3$ are the same, roughly equal to $N^{-1/2}$, which agrees with the rate of VBDM and the theoretical error rate in Theorem~\ref{convrate_diri} as well.

\begin{figure*}[htbp]
{\scriptsize \centering
\begin{tabular}{cc}
{\normalsize (a) \textbf{FE} of $\Delta_M$} & { \normalsize (b) \textbf{IE} of solutions }
\\
\includegraphics[width=3
in, height=2.2 in]{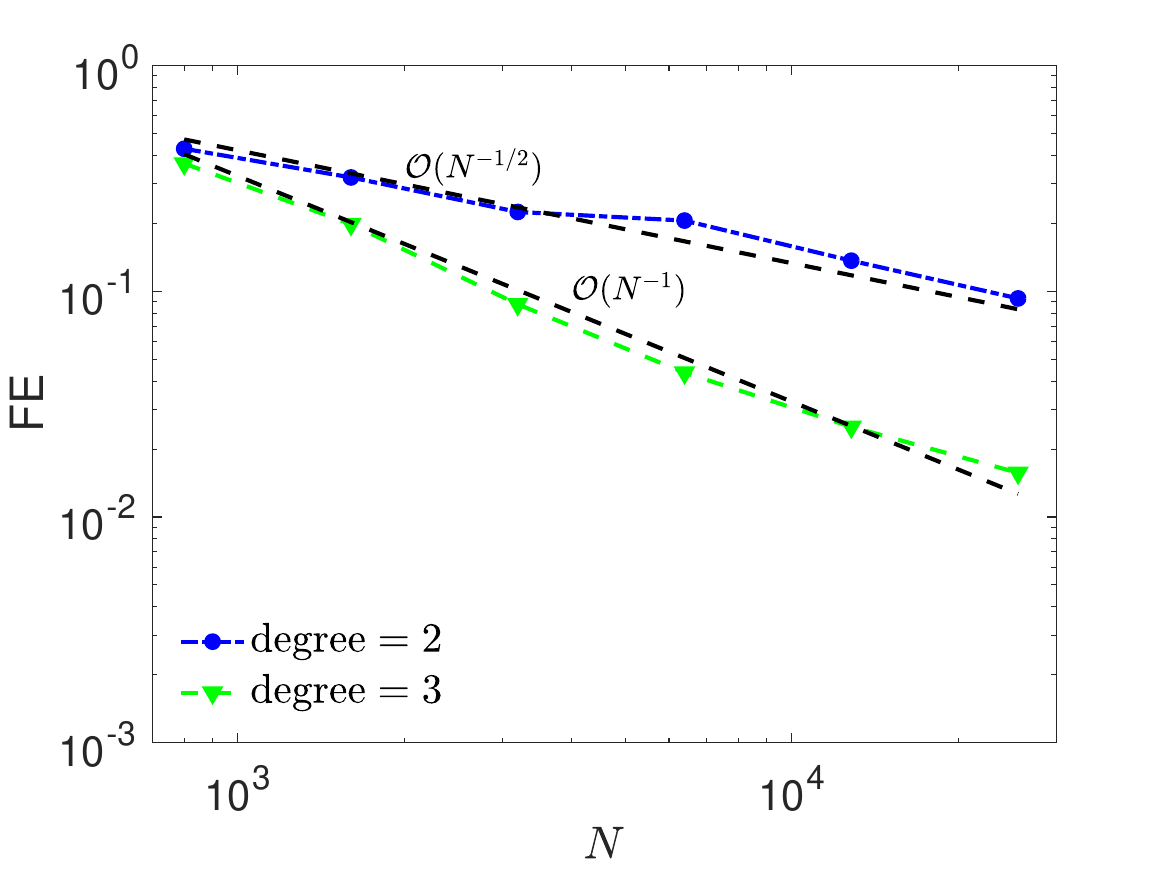} &
\includegraphics[width=3in, height=2.2 in]{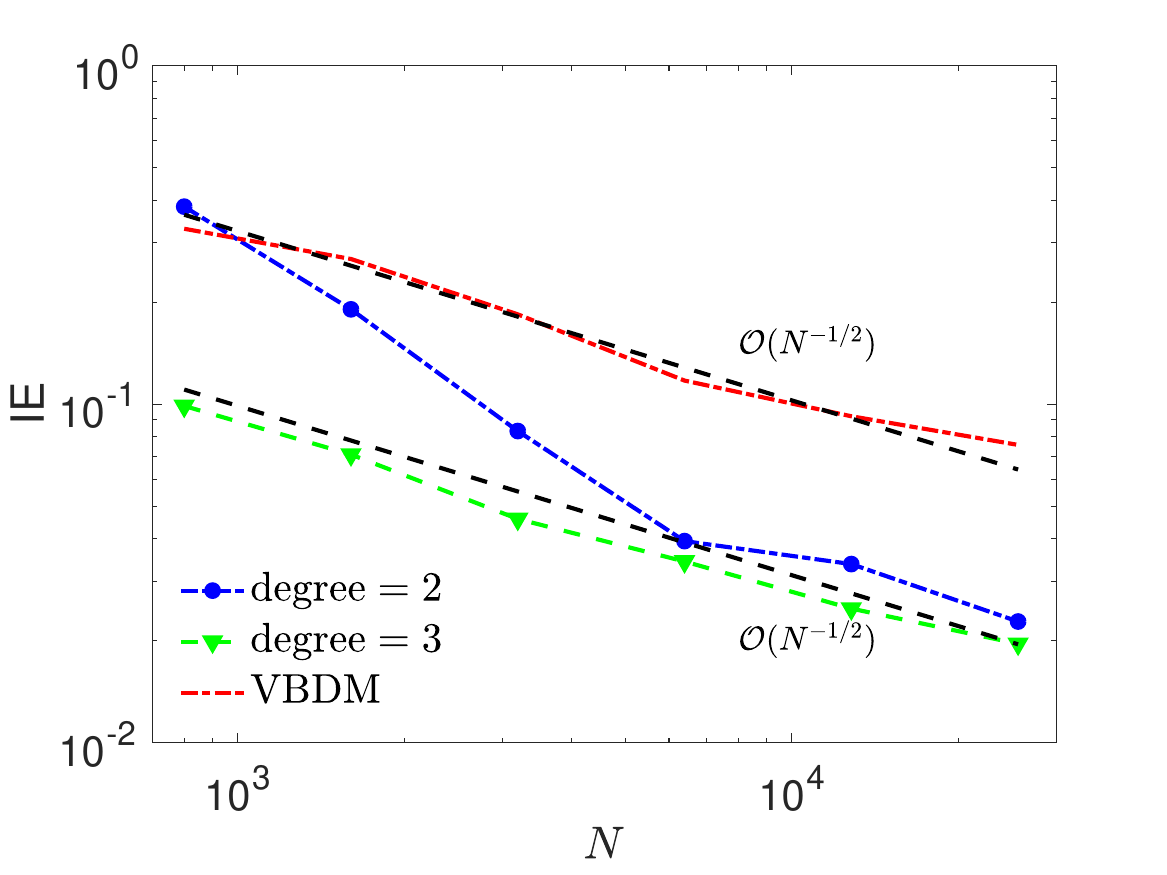}% vbdm: k2=[20,28,40,55,65] and k1=[40,56,80,110,130];
\end{tabular}
}
\caption{\textbf{2D semi-torus in $\mathbb{R}^3$}. $K=51$ nearest
neighbors are used. (a) \textbf{FE} of $\Delta_M$. (b) \textbf{IE} of solutions of the elliptic PDE with the homogenous Dirichlet boundary condition. }
\label{fig_semi_torus}
\end{figure*}

\subsection{On "face"}\label{sec5.4}
In the last example, we consider solving the equation (\ref{diri_pde}) with $f=8\cos(10(x_1+x_2+x_3))+4$ on an unknown manifold example of a two-dimensional "face" $\mathbf{x}=(x_1,x_2,x_3)\in M\subset\mathbb{R}^3$. The boundary points are given on the exact boundary of the "face". The dataset is from Keenan Crane's 3D repository\cite{face}. For comparison, we again take the surface FEM solution obtained from the FELICITY toolbox as the reference  (see Fig.~\ref{face_soln}(a)).
\comment{And we will compare the maximum absolute difference to the FEM solution which is defined as
$$
E=\max\limits_{1\leq i\leq N}|\hat{u}(\textbf{x}_i)-u^{FEM}(\textbf{x}_i)|
$$
where $u^{\mathrm{FEM}}$ is the FEM solution and $\widehat{u}$ is the estimated solution obtained from the corresponding solver.}

In Fig.~\ref{face_soln}(b), we report the result using the intrinsic polynomial approximation with degree of $l=2$ and $K=41$ nearest neighbors. To estimate the projection matrix at each point, we fix $K_\mathbf{\hat{P}}=23$ in the 2nd-order local SVD scheme.  We found that the maximum absolute difference between the FEM solution and the proposed method ($\textbf{IE}=0.0014$) is comparable to that between FEM and VBDM solutions ($\textbf{IE}=0.0014$).  Here, the scaling of the FEM solution is on the order of $10^{-2}$. We should point out here that the solution obtained from the intrinsic polynomial regression could be sensitive to the estimated projection matrix, as the surface is not smooth, especially near the ``nose'' region.

%  \comment{ \begin{figure*}[tbp]%
%    \centering
%     \begin{subfigure}[h]{0.33\linewidth}
%    \caption{FEM solution}
%   \includegraphics[width=\linewidth]{figures/face_FEM.eps}
%\end{subfigure}
%  \begin{subfigure}[h]{0.33\linewidth}
%    \caption{$l=2$. $\textbf{IE}=0.0014$.}
%    %difference in l2 2.7262e-04
%    \includegraphics[width=\linewidth]{figures/face_l2.eps}
%\end{subfigure}
% % \begin{subfigure}[h]{0.33\linewidth}
%   % \caption{$l=3$.max difference is 0.0361.}
%    %%\includegraphics[width=\linewidth]{figures/face_l3.eps}
%%\end{subfigure}
%  \begin{subfigure}[h]{0.33\linewidth}
%    \caption{VBDM.  $\textbf{IE}=0.0013$. }
%    %difference in $\ell_2$ is $3.8206e-04$
%    \includegraphics[width=\linewidth]{figures/face_DM.eps}
%\end{subfigure}
% \caption{(Color online) {\bf Face example}. $N=17157$. (a). FEM solution. (b). Absolute difference between FEM and intrinsic polynomial regression with degree of $l=2$. (c). Absolute difference between FEM and VBDM solutions. }
% \label{face_soln}
% \end{figure*}
%}

%\begin{figure*}[htbp]
%{\scriptsize \centering
%\begin{tabular}{cc}
%{\normalsize (a) FEM solution} & { \normalsize (b) $l=2$, $\textbf{IE}=0.0014$ }
%\\
%\includegraphics[width=2.3
%in, height=1.5 in]{figures/fig10_face_FEM.eps} &
%\includegraphics[width=2.3in, height=1.5 in]{figures/fig10_face_l2.eps}%
%\end{tabular}
%}
%\caption{(Color online) {\bf Face example}. $N=17157$. (a). FEM solution. (b). Absolute difference between FEM and intrinsic polynomial regression with degree of $l=2$.}
%\label{face_soln}
%\end{figure*}

\begin{figure*}[htbp]
{\scriptsize \centering
\begin{tabular}{ccc}
{\normalsize (a) FEM solution} & { \normalsize (b) $l=2$, $\textbf{IE}=0.0014$ } & {\normalsize (c) VBDM, $\textbf{IE}=0.0014$}
\\
\includegraphics[width=1.95in, height=1.3 in]{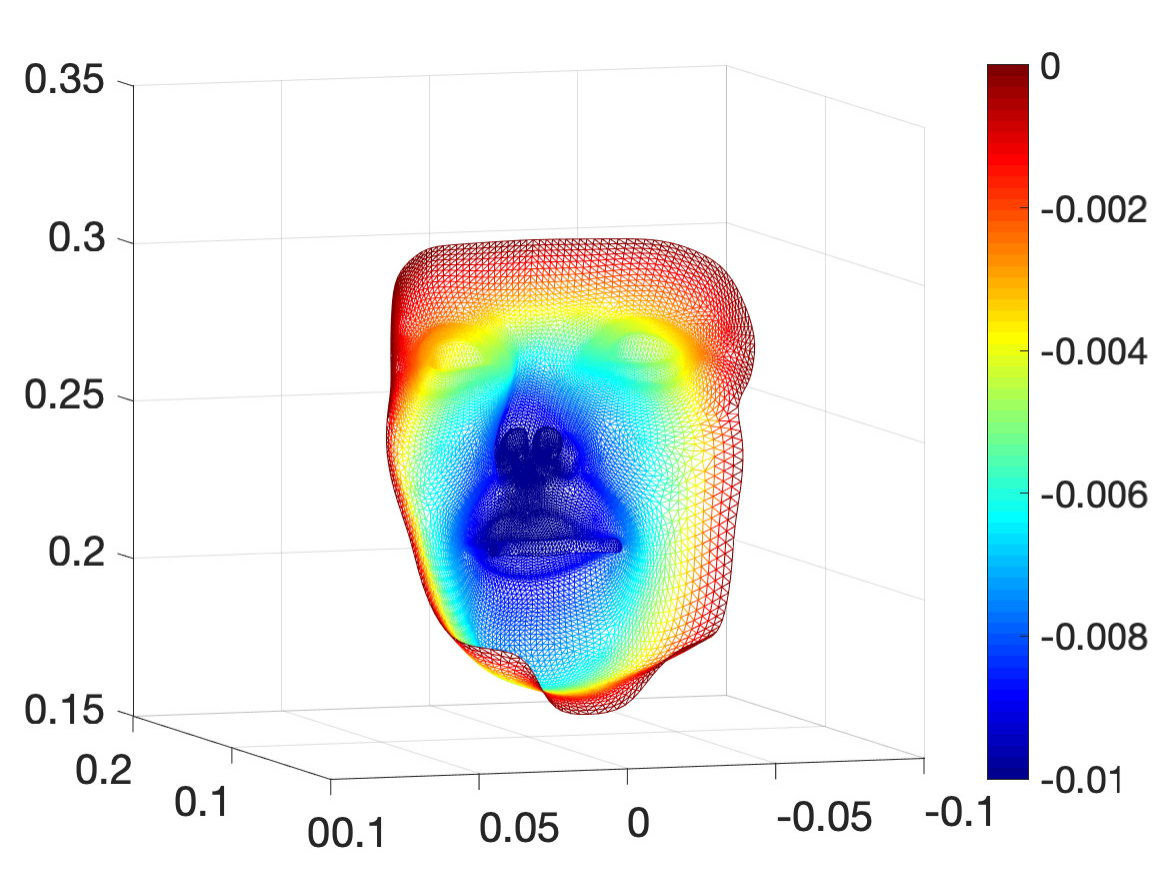} &
\includegraphics[width=1.95in, height=1.3 in]{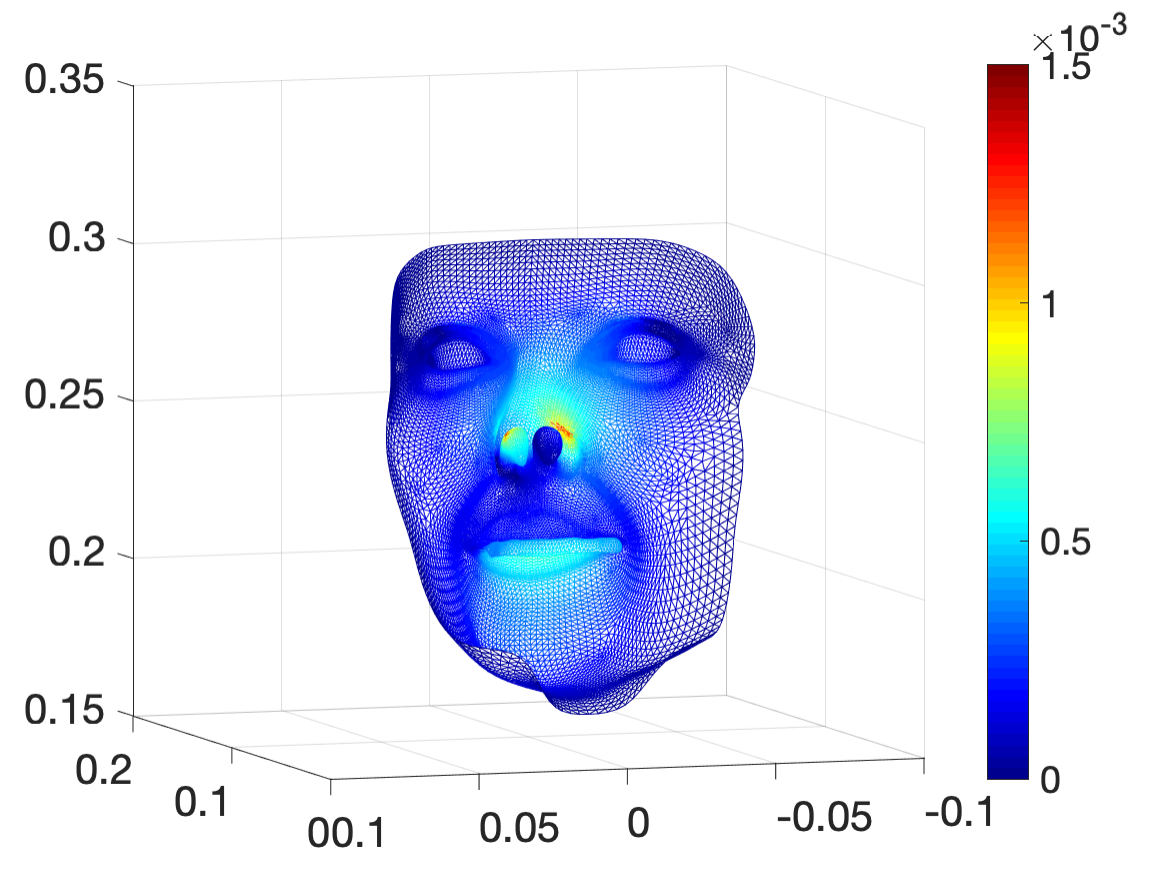} &
\includegraphics[width=1.95in, height=1.3 in]{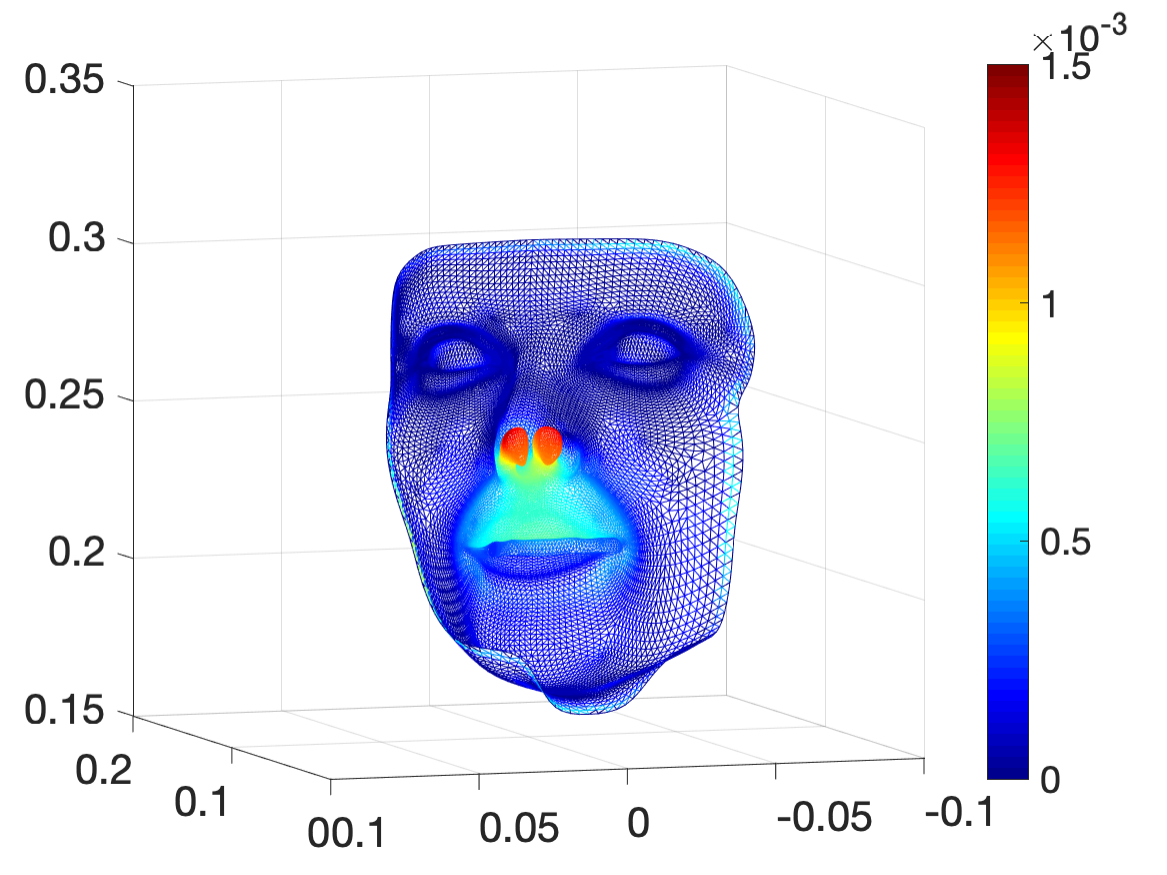}%
\end{tabular}
}
\caption{(Color online) {\bf Face example}. $N=17157$. (a) FEM solution. (b) Absolute difference between FEM and intrinsic polynomial regression with degree of $l=2$. (c) Absolute difference between FEM and VBDM. }
\label{face_soln}
\end{figure*}

  %\begin{figure*}[tbp]%
%    \centering
%     \begin{subfigure}[h]{0.45\linewidth}
%    \caption{FEM solution}
%   \includegraphics[width=\linewidth]{figures/fig10_face_FEM.eps}
%\end{subfigure}
%  \begin{subfigure}[h]{0.45\linewidth}
%    \caption{$l=2$. $\textbf{IE}=0.0014$.}
%    %difference in l2 2.7262e-04
%    \includegraphics[width=\linewidth]{figures/fig10_face_l2.eps}
%\end{subfigure}
%\begin{subfigure}[h]{0.45\linewidth}
%    \caption{VBDM. $\textbf{IE}=0.0014$.}
%    \includegraphics[width=\linewidth]{figures/fig10_face_VBDM.eps} % kk = 200, kk2 = 100;
%\end{subfigure}
% \caption{(Color online) {\bf Face example}. $N=17157$. (a). FEM solution. (b). Absolute difference between FEM and intrinsic polynomial regression with degree of $l=2$. }
% \label{face_soln}
% \end{figure*}

\section{Conclusions}\label{sec6}

{\color{black}
In this paper, we considered a GFDM approach for solving Poisson problems on smooth compact manifolds without or with boundaries, identified by randomly sampled data points. The scheme can reach an arbitrary-order algebraic accuracy on smooth manifolds when the tangent spaces of manifolds are given and boundary points are given if Dirichlet problems are considered.
We used Taylor's expansion formulation in \eqref{eqn:int_poly2} and the established error bound of GMLS in \eqref{errorrate} for the consistency analysis which relies on the local expansion of functions with respect to a set of intrinsic polynomial bases defined on the tangent bundles.
However, for randomly sampled data points, if only least squares approximations in Algorithm \ref{algo:local-LS} are employed, there is still no convergence of PDE solutions since the Laplacian matrix is not stable.
To stabilize the Laplacian matrix, we further combined a linear optimization approach with additional constraints in \eqref{eqn:linear_optm}-\eqref{eqn:linear_const}.
We provided the theoretical justification as well as supporting numerical examples to verify the convergence of solutions.
For homogeneous Dirichlet problems with only a point cloud given in the interior of manifolds, we considered the volume-constraint approach which imposes the boundary conditions on the points that are close to the boundary of manifolds.
Numerically, these close-to-boundary points are detected by considering the sign of the least squares weights in our new algorithm even without needing to know the location of the boundary. For this volume-constraint approach, we theoretically proved the convergence of solutions using the discrete maximum principle for randomly sampled data points.

While the result is promising, there are various open questions for future investigation. First, the theoretical justification is still lacking for consistency and stability when $C\neq 0$ in the linear optimization problem \eqref{eqn:linear_optm}-\eqref{eqn:linear_const}.
Second, it is of interest to understand the super-convergence phenomenon which is numerically observed for the solutions using even-degree intrinsic-polynomial approximations for randomly sampled data on manifolds.
Third, it is also of interest to generalize this approach to solve vector-valued PDEs on manifolds with boundaries identified with randomly sampled data points, extending the results in \cite{gross2020meshfree} for closed manifolds identified with quasi-uniform data points.
}

\section*{Acknowledgment}
The research of J.H. was partially supported by the NSF grants DMS-2207328, DMS-2229535, and the ONR grant N00014-22-1-2193.  S. J. was supported by the NSFC Grant No.
12101408 and the HPC Platform of ShanghaiTech University.

\appendix

\section{Proof of Theorem~\ref{thm_closed}}\label{app:A}

The following two technical lemmas will be useful for proving Theorem~\ref{thm_closed}.

\begin{lem}\label{lem:Lap}
Fix some $\mathbf{x}_{i}\in X$. Let $u(\mathbf{x})=\sum_{0\leq |\alpha
|\leq l}b_{\alpha }p_{\mathbf{x}_{i},\alpha }(\mathbf{x})+\sum_{|\alpha
|=l+1}r_{\mathbf{x}_{i,k},\alpha }p_{\mathbf{x}_{i},\alpha }(\mathbf{x})$ be
the Taylor expansion of $u\in C^{l+1}(M^{\ast })$ and $\bar{\bar{u}}(\mathbf{%
x})=\sum_{0\leq |\alpha |\leq 2}b_{\alpha }p_{\mathbf{x}_{i},\alpha }(%
\mathbf{x})$ be the truncation of the Taylor expansion of $u$ upper to
degree $2$. Then,
\begin{equation*}
\Delta _{M}u(\mathbf{x}_{i})=\Delta _{M}\bar{\bar{u}}(\mathbf{x}_{i}).
\end{equation*}
\end{lem}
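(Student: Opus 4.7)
The plan is to reduce the statement to a single local claim, then verify it directly in Riemannian normal coordinates. First, I would decompose
\[
u(\mathbf{x}) = \bar{\bar{u}}(\mathbf{x}) + S(\mathbf{x}), \qquad S(\mathbf{x}) := \sum_{3\leq |\alpha|\leq l} b_\alpha\, p_{\mathbf{x}_i,\alpha}(\mathbf{x}) + \sum_{|\alpha|=l+1} r_{\mathbf{x}_{i,k},\alpha}\, p_{\mathbf{x}_i,\alpha}(\mathbf{x}),
\]
so that by linearity of $\Delta_M$ it suffices to show $\Delta_M S(\mathbf{x}_i)=0$, and by linearity again this reduces to the pointwise claim that \emph{for every multi-index $\alpha$ with $|\alpha|\geq 3$, $\Delta_M p_{\mathbf{x}_i,\alpha}(\mathbf{x}_i)=0$.}

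Next, I would work in normal coordinates $\vec{s}$ centered at $\mathbf{x}_i$. At the base point these coordinates satisfy $g_{ij}(\vec{0})=\delta_{ij}$ and $\partial_k g_{ij}(\vec{0})=0$, so every Christoffel symbol vanishes at $\vec{0}$ and the local-coordinate formula for the Laplace–Beltrami operator collapses to
\[
\Delta_M f(\mathbf{x}_i) \;=\; \sum_{k=1}^d \frac{\partial^2 \tilde{f}}{\partial s_k^2}(\vec{0}), \qquad \tilde{f}(\vec{s}) := f(\exp_{\mathbf{x}_i}(\vec{s})).
\]
Combining equation (4) of the paper with $\vec{\Gamma}_{ij}(\vec{0})=\vec{0}$, the change of coordinates becomes $z^r(\vec{s}) = s_r + O(\|\vec{s}\|^3)$. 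Pulling this into the definition of the intrinsic polynomial gives
\[
\tilde{p}_{\mathbf{x}_i,\alpha}(\vec{s}) \;=\; \prod_{r=1}^d \bigl(s_r + O(\|\vec{s}\|^3)\bigr)^{\alpha_r} \;=\; s^\alpha + O\bigl(\|\vec{s}\|^{|\alpha|+2}\bigr).
\]
For $|\alpha|\geq 3$ both $s^\alpha$ and the $O(\|\vec{s}\|^{|\alpha|+2})$ remainder have identically vanishing pure second derivatives at the origin, so $\partial_{s_k}^2 \tilde{p}_{\mathbf{x}_i,\alpha}(\vec{0}) = 0$ for every $k$. Summing in $k$ yields $\Delta_M p_{\mathbf{x}_i,\alpha}(\mathbf{x}_i) = 0$, which closes the reduction and proves the lemma.

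There is no serious obstacle here, since the lemma is essentially the statement that $\Delta_M$ is a second-order differential operator and therefore depends only on the $2$-jet of its argument at the base point — and the intrinsic polynomial basis has been built precisely so that higher-order terms contribute trivial $2$-jets at $\mathbf{x}_i$. The only small points requiring care are (i) justifying the asymptotic inversion $z^r(\vec{s}) = s_r + O(\|\vec{s}\|^3)$ using that the map $\vec{s}\mapsto \mathbf{z}$ is a smooth local diffeomorphism whose differential at $\vec{0}$ is the identity, and (ii) verifying that the $O(\|\vec{s}\|^{|\alpha|+2})$ remainder is smooth enough for term-by-term differentiation; both are routine consequences of the smoothness of the exponential map and of $u\in C^{l+1}(M^*)$.
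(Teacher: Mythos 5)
Your proof is correct, but it takes a genuinely different route from the paper's. The paper never decomposes $u-\bar{\bar{u}}$; it computes $\Delta_M\bar{\bar{u}}(\mathbf{x}_i)$ head-on by writing $\bar{\bar{u}}$ explicitly in terms of $\partial_{s_k}\tilde{u}(\vec{0})$, $\partial^2_{s_i s_j}\tilde{u}(\vec{0})$ and the Christoffel symbols $\Gamma^k_{ij}$, differentiating twice along each $s_m$ via the chain rule, and observing that at $\mathbf{x}=\mathbf{x}_i$ the two Christoffel contributions cancel exactly, leaving $\partial^2_{s_m}\tilde{u}(\vec{0})$; summing over $m$ gives $\Delta_M u(\mathbf{x}_i)$. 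You instead show that the tail $S=u-\bar{\bar{u}}$ is invisible to $\Delta_M$ at the base point, via the order count $\tilde{p}_{\mathbf{x}_i,\alpha}(\vec{s})=s^\alpha+O(\|\vec{s}\|^{|\alpha|+2})$, which exploits the vanishing of the Christoffel symbols at the origin of normal coordinates (the paper deliberately keeps them symbolic and lets them cancel, cf.\ Remark~\ref{rem_basis}). Your version is shorter and makes the underlying mechanism explicit ($\Delta_M$ is second order, so it only sees the $2$-jet, and the intrinsic basis elements of degree $\geq 3$ have trivial $2$-jets at $\mathbf{x}_i$); moreover, the intermediate fact you isolate, $\Delta_M p_{\mathbf{x}_i,\alpha}(\mathbf{x}_i)=0$ for $|\alpha|\geq 3$, is precisely what the paper invokes separately when bounding the term $I_2$ in the proof of Theorem~\ref{thm_closed}, so your argument yields slightly more than the lemma asks for. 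One small point to tighten: the Lagrange remainder coefficients $r_{\cdot,\alpha}$ depend on the evaluation point, so ``linearity'' does not literally reduce that piece of $S$ to the monomial claim; however, the same order counting applies to the remainder as a whole (it is $C^{l+1}$ and vanishes to order $l+1\geq 3$ at $\mathbf{x}_i$, hence has vanishing second derivatives there), so the conclusion stands.
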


\begin{proof}
We use the notations as introduced in Section \ref{Sec:Taylor}. Based
on the definition to Laplace-Beltrami operator, we have
\begin{equation*}
\Delta _{M}u(\mathbf{x}_{i})=\sum_{m=1}^{d}\frac{\partial ^{2}\tilde{u}(\vec{%
0})}{\partial s_{m}^{2}}=\sum_{m=1}^{d}\frac{\partial ^{2}u(\mathbf{x}_{i})}{%
\partial s_{m}^{2}},
\end{equation*}%
where $\tilde{u}(\vec{s})=u(\exp _{\mathbf{x}_{i}}(\vec{s}))=u(\mathbf{x})$.
We write down the expression for $\bar{\bar{u}}(\mathbf{x})$,
\begin{equation*}
\bar{\bar{u}}(\mathbf{x})=u(\mathbf{x}_{i})+\sum_{k=1}^{d}z^{k}\frac{%
\partial }{\partial s_{k}}\tilde{u}(\vec{0})+\frac{1}{2}%
\sum_{i,j=1}^{d}z^{i}z^{j}\bigg(\frac{\partial ^{2}}{\partial s_{i}\partial
s_{j}}\tilde{u}(\vec{0})-\sum_{k=1}^{d}\Gamma _{ij}^{k}\frac{\partial }{%
\partial s_{k}}\tilde{u}(\vec{0})\bigg).
\end{equation*}%
%{\color{blue}Should this be
%\begin{equation*}
%\bar{\bar{u}}(\mathbf{x})=\tilde{u}(\vec{0})+\sum_{k=1}^{d}z^{k}\frac{%
%\partial }{\partial s_{k}}\tilde{u}(\vec{s})+\frac{1}{2}%
%\sum_{i,j=1}^{d}z^{i}z^{j}\bigg(\frac{\partial ^{2}}{\partial s_{i}\partial
%s_{j}}\tilde{u}(\vec{s})-\sum_{k=1}^{d}\Gamma _{ij}^{k}\frac{\partial }{%
%\partial s_{k}}\tilde{u}(\vec{s})\bigg).
%\end{equation*}
%and like wise the next two equations. Once you set $x=x_i$, the conclusion is correct.
%}

For any $m\in \left\{ 1,\ldots ,d\right\} $, we can compute its first and
second derivatives,%
\begin{eqnarray*}
\frac{\partial \bar{\bar{u}}(\mathbf{x})}{\partial s_{m}} &=&\sum_{k=1}^{d}%
\bigg(\mathbf{t}_{k}^{\top }\frac{\partial \mathbf{x}}{\partial s_{m}}\bigg)%
\frac{\partial \tilde{u}(\vec{0})}{\partial s_{k}} \\
&&+\frac{1}{2}\sum_{i,j=1}^{d}\bigg(\mathbf{t}_{i}^{\top }\frac{\partial
\mathbf{x}}{\partial s_{m}}z^{j}+z^{i}\mathbf{t}_{j}^{\top }\frac{\partial
\mathbf{x}}{\partial s_{m}}\bigg)\bigg(\frac{\partial ^{2}}{\partial
s_{i}\partial s_{j}}\tilde{u}(\vec{0})-\sum_{k=1}^{d}\Gamma _{ij}^{k}\frac{%
\partial }{\partial s_{k}}\tilde{u}(\vec{0})\bigg), \\
\frac{\partial ^{2}\bar{\bar{u}}(\mathbf{x})}{\partial s_{m}^{2}}
&=&\sum_{k=1}^{d}\bigg(\mathbf{t}_{k}^{\top }\frac{\partial ^{2}\mathbf{x}}{%
\partial s_{m}^{2}}\bigg)\frac{\partial \tilde{u}(\vec{0})}{\partial s_{k}}
\\
&&+\frac{1}{2}\sum_{i,j=1}^{d}\bigg(\mathbf{t}_{i}^{\top }\frac{\partial ^{2}%
\mathbf{x}}{\partial s_{m}^{2}}z^{j}+z^{i}\mathbf{t}_{j}^{\top }\frac{%
\partial ^{2}\mathbf{x}}{\partial s_{m}^{2}}+2\mathbf{t}_{i}^{\top }\frac{%
\partial \mathbf{x}}{\partial s_{m}}\mathbf{t}_{j}^{\top }\frac{\partial
\mathbf{x}}{\partial s_{m}}\bigg)\bigg(\frac{\partial ^{2}}{\partial
s_{i}\partial s_{j}}\tilde{u}(\vec{0})-\sum_{k=1}^{d}\Gamma _{ij}^{k}\frac{%
\partial }{\partial s_{k}}\tilde{u}(\vec{0})\bigg).
\end{eqnarray*}%
Take $\mathbf{x=x}_{i}$, we obtain that
\begin{eqnarray*}
\frac{\partial ^{2}\bar{\bar{u}}(\mathbf{x}_{i})}{\partial s_{m}^{2}}
&=&\sum_{k=1}^{d}\Gamma _{mm}^{k}\frac{\partial \tilde{u}(\vec{0})}{\partial
s_{k}}+\frac{1}{2}\sum_{i,j=1}^{d}\bigg(2\delta _{im}\delta _{jm}\bigg)\bigg(%
\frac{\partial ^{2}}{\partial s_{i}\partial s_{j}}\tilde{u}(\vec{0}%
)-\sum_{k=1}^{d}\Gamma _{ij}^{k}\frac{\partial }{\partial s_{k}}\tilde{u}(%
\vec{0})\bigg) \\
&=&\sum_{k=1}^{d}\Gamma _{mm}^{k}\frac{\partial \tilde{u}(\vec{0})}{\partial
s_{k}}+\frac{\partial ^{2}}{\partial s_{m}^{2}}\tilde{u}(\vec{0}%
)-\sum_{k=1}^{d}\Gamma _{mm}^{k}\frac{\partial \tilde{u}(\vec{0})}{\partial
s_{k}}=\frac{\partial ^{2}\tilde{u}(\vec{0})}{\partial s_{m}^{2}}.
\end{eqnarray*}%
Take sum over $m\in \left\{ 1,\ldots ,d\right\} $ and the proof is complete.
\end{proof}

\begin{lem} \label{lem:hK}
Fix some integer $K$ and define $h_{K,\max }=\max_{k\in \{1,\ldots
,K\}}\left\Vert \mathbf{x}_{i,k-}\mathbf{x}_{i}\right\Vert $ for a base
point $\mathbf{x}_{i}$, where $\left\Vert \cdot \right\Vert $ denotes the
Euclidean distance. Then,%
\begin{equation*}
\mathbb{P}_{X\sim \mathcal{U}}(h_{K,\max }>\delta )\leq \exp (-C(N-K)\delta
^{d}),
\end{equation*}%
where $C=C(d)/\text{Vol}(M)$ and $\mathcal{U}$ denotes the uniform
distribution on $M$. Moreover, with probability higher than $1-\frac{1}{N}$,
we have
\begin{equation*}
h_{K,\max }=O(N^{-\frac{1}{d}}),
\end{equation*}%
where we have ignored the factor $\log N$ above.
\end{lem}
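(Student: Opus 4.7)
The plan is to reduce the lemma to a standard binomial tail estimate, then apply Chernoff. First, I would rewrite the event: $\{h_{K,\max} > \delta\}$ is precisely the event that fewer than $K$ of the points in $X\setminus\{\mathbf{x}_i\}$ fall inside the Euclidean ball $B(\mathbf{x}_i,\delta)$. Since the $N-1$ remaining samples are i.i.d.~uniform on $M$ and independent of $\mathbf{x}_i$, the count $N_B := |(X\setminus\{\mathbf{x}_i\}) \cap B(\mathbf{x}_i,\delta)|$ is $\textup{Bin}(N-1,p_\delta)$ with $p_\delta = \textup{vol}(B(\mathbf{x}_i,\delta)\cap M)/\textup{Vol}(M)$.

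Next I would lower bound $p_\delta$. Because $M$ is a smooth compact embedded submanifold, there is a positive injectivity radius $r_0$ and a uniform lower bound $\textup{vol}(B(\mathbf{x}_i,\delta)\cap M)\geq c(d)\delta^d$ for all $\mathbf{x}_i\in M$ and all $\delta\leq r_0$; this follows from volume comparison applied to a geodesic disk of radius comparable to $\delta$, using the fact that Euclidean and geodesic distance are equivalent at small scales. Hence $p_\delta\geq C\delta^d$ with $C = c(d)/\textup{Vol}(M)$. I would then apply a multiplicative Chernoff bound on the lower tail: in the regime $(N-1)p_\delta\geq 2K$, one gets $\mathbb{P}(N_B<K)\leq \exp\bigl(-(N-1)p_\delta/8\bigr)$, and combining with $p_\delta\geq C\delta^d$ and $N-1\geq N-K$ yields $\mathbb{P}(h_{K,\max}>\delta)\leq \exp(-C(N-K)\delta^d)$ (after relabeling the constant). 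For $\delta$ small (but with $(N-1)p_\delta<2K$) the bound is trivially satisfied for $N$ large enough by choosing the constant suitably.

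For the second assertion, I would choose $\delta_N := \bigl(\tfrac{2\log N}{C(N-K)}\bigr)^{1/d}$, so that the first part gives $\mathbb{P}(h_{K,\max}>\delta_N)\leq N^{-2}\leq 1/N$. Since $\delta_N = O\bigl((\log N/N)^{1/d}\bigr) = O(N^{-1/d})$ after absorbing the $\log N$ factor, the high-probability bound $h_{K,\max}=O(N^{-1/d})$ follows. The main technical nuance is ensuring the Chernoff regime $(N-1)p_\delta\gtrsim K$ is compatible with the chosen $\delta_N$: for fixed $K$ and large $N$ we have $(N-1)p_{\delta_N}\sim \log N$, which is eventually $\geq 2K$, so the regime is valid asymptotically. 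The other place that needs attention is that the volume estimate $p_\delta\geq C\delta^d$ must hold uniformly over all base points $\mathbf{x}_i\in M$ and all $\delta$ in the relevant range; compactness of $M$ and smoothness of the embedding make this uniformity routine.
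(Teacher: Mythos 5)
Your proof is correct and follows essentially the same route as the paper: both reduce the event $\{h_{K,\max}>\delta\}$ to the event that fewer than $K$ sample points land in a ball of radius $\delta$ about $\mathbf{x}_i$, lower-bound the volume of that ball by $c(d)\delta^d$ (the paper cites Croke's inequality for geodesic balls and then transfers to Euclidean distance via $\|\cdot\|\le d_g$), and finish with an exponential tail bound and the choice $\delta_N\sim(\log N/(C(N-K)))^{1/d}$. The only real difference is that you control the binomial count with a multiplicative Chernoff bound while the paper bounds the tail directly by $(1-p_\delta)^{N-K}$; your version is, if anything, the more careful of the two.
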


\begin{proof}
Let $B_{\delta }(\mathbf{x}_{i})$\ dnote a geodesic ball of radius
around the base point $\mathbf{x}_{i}\in X\subset M$ and let $B_{\delta
}^{c}=M\backslash B_{\delta }(\mathbf{x}_{i})$. For a point $\mathbf{x}%
_{m}\in X$\ uniformly sampled from $M$, it holds that
\begin{equation*}
\mathbb{P}_{X\sim \mathcal{U}}\left( \mathbf{x}_{m}\in X\cap B_{\delta
}^{c}\right) =1-\frac{\text{Vol}(B_{\delta }(\mathbf{x}_{i}))}{\text{Vol}(M)}%
.
\end{equation*}%
For small $\delta ,$ we have
\begin{eqnarray*}
&&\mathbb{P}_{X\sim \mathcal{U}}(\max_{k\in \{1,\ldots ,K\}}d_{g}(\mathbf{x}%
_{i,k},\mathbf{x}_{i}) >\delta )=\mathbb{P}\left( \text{there are at least
}N-K\text{ points in }X\text{ outside of }B_{\delta }(\mathbf{x}_{i})\right)
\\
&\leq &\mathbb{P(}\mathbf{x}_{i,K+1}\in B_{\delta }^{c},\ldots ,\mathbf{x}%
_{i,N}\in B_{\delta }^{c})=\left( 1-\frac{\text{Vol}(B_{\delta }(\mathbf{x}%
_{i}))}{\text{Vol}(M)}\right) ^{N-K}.
\end{eqnarray*}%
Using Proposition 14 in \cite{croke1980some}, we obtain that%
\begin{equation*}
\mathbb{P}_{X\sim \mathcal{U}}(\max_{k\in \{1,\ldots ,K\}}d_{g}(\mathbf{x}%
_{i,k},\mathbf{x}_{i})>\delta )\leq \left( 1-\frac{C(d)\delta ^{d}}{\text{Vol%
}(M)}\right) ^{N-K}=(1-C\delta ^{d})^{N-K}\leq \exp (-C(N-K)\delta ^{d}),
\end{equation*}%
where $C(d)$ is a constant depending on the dimension $d$. Using the fact
that $\left\Vert \mathbf{x}_{i,k}-\mathbf{x}_{i}\right\Vert \leq d_{g}(%
\mathbf{x}_{i,k},\mathbf{x}_{i}),$ we arrive at
\begin{equation*}
\mathbb{P}_{X\sim \mathcal{U}}(h_{K,\max }>\delta )\leq \exp (-C(N-K)\delta
^{d}).
\end{equation*}%
Moreover, taking $\exp (-C(N-K)\delta ^{d})=\frac{1}{N},$ we obtain%
\begin{equation*}
\delta =\left( \frac{\log N}{C(N-K)}\right) ^{\frac{1}{d}}\sim O(N^{-\frac{1%
}{d}}),
\end{equation*}%
by ignoring the factor $\log N$. The proof is complete.
\end{proof}

%\begin{proof}
{\bf Proof of Theorem~\ref{thm_closed}:}
Notice that,
\BEA
\left(\hat{L}_{X,A}\mathbf{U} - \hat{L}_{X,A}\mathbf{u}\right)_i &=& f(\mathbf{x}_i) -a(\mathbf{x}_i) u(\mathbf{x}_i)  + \left(\hat{L}_X \mathbf{u}\right)_i  \notag \\
 &=&f(\mathbf{x}_i) -a(\mathbf{x}_i) u(\mathbf{x}_i)  + \left( \left( \hat{L}_X  \mathbf{u}\right)_i - \Delta_M u(\mathbf{x}_i) \right) +  \Delta_M u(\mathbf{x}_i) \notag\\
 &=& \left( \hat{L}_X  \mathbf{u}\right)_i - \Delta_M u(\mathbf{x}_i). \label{proof_eq1}
  %&=&f(\mathbf{x}_i) -a(\mathbf{x}_i) u(\mathbf{x}_i)  + \left((\hat{L}_X - L_X) \mathbf{u}\right)_i +  \Delta_M u(\mathbf{x}_i) +  O(N^{-\frac{l-1}{d}})\notag\\
%&=& \left((\hat{L}_X - L_X) \mathbf{u}\right)_i + O(N^{-\frac{l-1}{d}}),
\EEA
Applying the Taylor expansion for the function $u\in C^{l+1}(M^{\ast })$,
one obtains that%
\begin{equation*}
u(\mathbf{x}_{i,k})=\sum_{0\leq |\alpha |\leq l}b_{\alpha }p_{\mathbf{x}%
_{i},\alpha }(\mathbf{x}_{i,k})+\sum_{|\alpha |=l+1}r_{\mathbf{x}%
_{i,k},\alpha }p_{\mathbf{x}_{i},\alpha }(\mathbf{x}_{i,k}),\text{ \ \ }%
k=1,\ldots ,K,
\end{equation*}%
where the remainder is given in Lagrange's form, $p_{\mathbf{x}_{i},\alpha }(%
\mathbf{x}_{i,k})=\prod_{j=1}^{d}(z_{\mathbf{x}_{i,k}}^{j})^{\alpha
_{j}}=\prod_{j=1}^{d}\left[ \mathbf{t}_{{\mathbf{x}_{i}},j}\cdot (\mathbf{x}%
_{i,k}-{\mathbf{x}_{i}})\right] ^{\alpha _{j}}$, $\{b_{\alpha }\}_{0\leq
|\alpha |\leq l}$ depend only on $\mathbf{x}_{i}$, and $\{r_{\mathbf{x}%
_{i,k},\alpha }\}_{|\alpha |=l+1}$ depend on some point on the geodesic that
connects $\mathbf{x}_{i}$ and $\mathbf{x}_{i,k}$.
Substituting the above Taylor expansion into \eqref{proof_eq1}, we arrive at
\begin{eqnarray*}
&&(\hat{L}_{X}\mathbf{u)}_{i}-\Delta _{M}u(\mathbf{x}_{i})=\sum_{k=1}^{K}%
\hat{w}_{k}u(\mathbf{x}_{i,k})-\Delta _{M}u(\mathbf{x}_{i}) \\
&=&\sum_{k=1}^{K}\hat{w}_{k}\bigg(\sum_{0\leq |\alpha |\leq l}b_{\alpha }p_{%
\mathbf{x}_{i},\alpha }(\mathbf{x}_{i,k})+\sum_{|\alpha |=l+1}r_{\mathbf{x}%
_{i,k},\alpha }p_{\mathbf{x}_{i},\alpha }(\mathbf{x}_{i,k})\bigg)-\Delta
_{M}u(\mathbf{x}_{i}) \\
&=&\bigg(\sum_{k=1}^{K}\sum_{0\leq |\alpha |\leq l}w_{k}b_{\alpha }p_{%
\mathbf{x}_{i},\alpha }(\mathbf{x}_{i,k})-\Delta _{M}u(\mathbf{x}_{i})\bigg)+%
\bigg(\sum_{k=1}^{K}\sum_{|\alpha |=l+1}\hat{w}_{k}r_{\mathbf{x}%
_{i,k},\alpha }p_{\mathbf{x}_{i},\alpha }(\mathbf{x}_{i,k})\bigg) \\
&=&\underbrace{\bigg(\sum_{k=1}^{K}\sum_{1\leq |\alpha |\leq
2}w_{k}b_{\alpha }p_{\mathbf{x}_{i},\alpha }(\mathbf{x}_{i,k})-\Delta _{M}u(%
\mathbf{x}_{i})\bigg)}_{I_{1}}+\underbrace{\bigg(\sum_{k=1}^{K}\sum_{3\leq
|\alpha |\leq l}w_{k}b_{\alpha }p_{\mathbf{x}_{i},\alpha }(\mathbf{x}_{i,k})%
\bigg)}_{I_{2}} \\
&&+\underbrace{\bigg(\sum_{k=2}^{K}\sum_{|\alpha |=l+1}\hat{w}_{k}r_{\mathbf{%
x}_{i,k},\alpha }p_{\mathbf{x}_{i},\alpha }(\mathbf{x}_{i,k})\bigg)}_{I_{3}},
\end{eqnarray*}%
where the third line follows from the consistency constraints in %
\eqref{eqn:linear_const}. In the last equality, the summation term in $I_{1}$
corresponding to $|\alpha |=0$\ vanishes since $\sum_{k=1}^{K}w_{k}=\Delta
_{g}1=0$, and the term in $I_{3}$ corresponding to $k=1$ vanishes since $p_{%
\mathbf{x}_{i},\alpha }(\mathbf{x}_{i,1})=p_{\mathbf{x}_{i},\alpha }(\mathbf{%
x}_{i})=0$.

For the first error term $I_{1}$, using the Lemma~\ref{lem:Lap} in Appendix~\ref{app:A}, we
arrive at
\begin{eqnarray*}
\left\vert I_{1}\right\vert &=&\Big\vert\sum_{k=1}^{K}w_{k}\sum_{1\leq
|\alpha |\leq 2}b_{\alpha }p_{\mathbf{x}_{i},\alpha }(\mathbf{x}%
_{i,k})-\Delta _{M}\sum_{1\leq |\alpha |\leq 2}b_{\alpha }p_{\mathbf{x}%
_{i},\alpha }(\mathbf{x}_{i})\Big\vert \\
&\leq &\sum_{1\leq |\alpha |\leq 2}\Big\vert b_{\alpha }\Big\vert\Big\vert%
\sum_{k=1}^{K}w_{k}p_{\mathbf{x}_{i},\alpha }(\mathbf{x}_{i,k})-\Delta
_{M}p_{\mathbf{x}_{i},\alpha }(\mathbf{x}_{i})\Big\vert=O(N^{-\frac{l-1}{d}%
}),
\end{eqnarray*}%
where the last equality follows from the GMLS estimator in \eqref{errorrate}%
. For the second error term $I_{2}$, using the similar technique, we arrive
at%
\begin{eqnarray*}
\left\vert I_{2}\right\vert &\leq &\sum_{3\leq |\alpha |\leq l}\Big\vert %
b_{\alpha }\Big\vert\Big\vert\sum_{k=1}^{K}w_{k}p_{\mathbf{x}_{i},\alpha }(%
\mathbf{x}_{i,k})\Big\vert \\
&=&\sum_{3\leq |\alpha |\leq l}\Big\vert b_{\alpha }\Big\vert\Big\vert%
\sum_{k=1}^{K}w_{k}p_{\mathbf{x}_{i},\alpha }(\mathbf{x}_{i,k})-\Delta
_{M}p_{\mathbf{x}_{i},\alpha }(\mathbf{x}_{i})\Big\vert=O(N^{-\frac{l-1}{d}%
}),
\end{eqnarray*}%
where the second equality follows from the fact that $\Delta _{M}p_{\mathbf{x%
}_{i},\alpha }(\mathbf{x}_{i})=0$ for all $|\alpha |\geq 3$.

For the third error term $I_{3}$, we define $R_{\max }=\max_{\mathbf{y}\in B(%
\mathbf{x}_{i},C_{2}h_{0}),|\alpha |=l+1}\left\vert r_{\mathbf{y},\alpha
}\right\vert $ and $h_{K,\max }=\max_{k\in \{1,\ldots ,K\}}\left\Vert
\mathbf{x}_{i,k-}\mathbf{x}_{i}\right\Vert $. Notice that $\hat{w}_{k}\geq 0$
for $k=2,\ldots ,K,$ then the error term $I_{3}$ can be bounded by%
\begin{eqnarray}
\left\vert I_{3}\right\vert &\leq &R_{\max }\sum_{k=2}^{K}\sum_{|\alpha
|=l+1}\hat{w}_{k}|p_{\mathbf{x}_{i},\alpha }(\mathbf{x}_{i,k})|\leq R_{\max
}\sum_{k=2}^{K}\sum_{|\alpha |=l+1}\hat{w}_{k}\sum_{j=1}^{d}C_{0}|z_{\mathbf{%
x}_{i,k}}^{j}|^{l+1}  \notag \\
&\leq &C_{0}R_{\max }h_{K,\max }^{l-1}\left(
\begin{array}{c}
l+d \\
d-1%
\end{array}%
\right) \sum_{j=1}^{d}\sum_{k=2}^{K}\hat{w}_{k}(z_{\mathbf{x}%
_{i,k}}^{j})^{2},  \label{eqn:I1_v2}
\end{eqnarray}%
%{\color{blue} how do you get the last inequality above?}
where $\left(
\begin{array}{c}
l+d \\
d-1%
\end{array}%
\right) $\ is the number of indices for $|\alpha |=l+1$. For the second
inequality above, we have used arithmetic geometric average inequality
(Young inequality), that is, there exists a constant $C_{0}$ such that $%
\prod_{j=1}^{d}(z_{\mathbf{x}_{i,k}}^{j})^{\alpha _{j}}\leq
C_{0}\sum_{j=1}^{d}|z_{\mathbf{x}_{i,k}}^{j}|^{l+1}$ for all $\left\vert
\alpha \right\vert =\sum_{j=1}^{d}\left\vert \alpha _{j}\right\vert =l+1$.
For the last inequality, we have used the fact that $|z_{\mathbf{x}_{i,k}}^{j}| = |\mathbf{t}_{{\mathbf{x}_{i}},j}\cdot (\mathbf{x}_{i,k}-{\mathbf{x}_{i}})| \leq \Vert \mathbf{x}_{i,k} - \mathbf{x}_{i} \Vert \leq h_{K,\max}$. Using again the consistency constraints in \eqref{eqn:linear_const}, we
obtain for each $j\in \{1,\ldots ,d\}$ that%
\begin{equation*}
\sum_{k=2}^{K}\hat{w}_{k}(z_{\mathbf{x}_{i,k}}^{j})^{2}=\sum_{k=1}^{K}\hat{w}%
_{k}(z_{\mathbf{x}_{i,k}}^{j})^{2}=\sum_{k=1}^{K}w_{k}(z_{\mathbf{x}%
_{i,k}}^{j})^{2}=[\Delta _{M}(z^{j})^{2}]|_{\mathbf{x}_{i}}+O(h_{X,M}^{l-1}),
\end{equation*}%
where $z^{j}=\mathbf{t}_{{\mathbf{x}_{i}},j}\cdot (\mathbf{x}-{\mathbf{x}_{i}%
})$ and the last equality follows from the GMLS estimator \eqref{errorrate}.
Substituting above into \eqref{eqn:I1_v2}, we arrive at the upper bound for $%
I_{3},$%
\begin{equation*}
\left\vert I_{3}\right\vert \leq C_{0}R_{\max }h_{K,\max }^{l-1}\bigg(%
\begin{array}{c}
l+d \\
d-1%
\end{array}%
\bigg)\bigg(\sum_{j=1}^{d}|\Delta _{M}(z^{j})^{2}(\mathbf{x}%
_{i})|+C_{2}h_{X,M}^{l-1}\bigg)=O(h_{K,\max }^{l-1}).
\end{equation*}%
Using the fact that $h_{K,\max }=O(N^{-\frac{1}{d}})$ with probability
higher than $1-\frac{1}{N}$ as shown in Lemma~\ref{lem:hK} in Appendix~\ref{app:A}, we obtain
that $\left\vert I_{3}\right\vert =O(N^{-\frac{l-1}{d}})$. Using the probability
$h_{X,M}=O(N^{-\frac{1}{d}})$ and $h_{K,\max}=O(N^{-\frac{1}{d}})$, we obtain the consistency
rate of $O(N^{-\frac{l-1}{d}})$ with probability higher than $1-\frac{2}{N}$.

Moreover, since $C=0$, $\hat{L}_{X}$ is diagonally dominant, and thus, $\hat{%
L}_{X,A}$ is strictly diagonally dominant. Using the Ahlberg-Nilson-Varah
bound \cite{ahlberg1963,varah1975}, we obtain $\Vert \hat{L}_{X,A}^{-1}\Vert _{\infty }\leq
1/\min_{i}a(\mathbf{x}_{i})$\ and the proof is complete.

%Moreover, since $C=0$, $\hat{L}_X$ is diagonally dominant, and thus, $\hat{L}_{X,A}$ is strictly diagonally dominant and the proof is complete.

%where on the third line, we used the error bound in \eqref{errorrate} which holds with probability higher than $1-\frac{1}{N}$.
%Applying the error estimate (Eq.~\eqref{MLS_error} for $\alpha = 0$) for the MLS approximation on function $u$ with an intrinsic polynomial of degree less than or equal to $l$, with probability higher than $1-\frac{1}{N}$,
%\[
%\left((\hat{L}_X - L_X) \mathbf{u}\right)_i  = \left((\hat{L}_X - L_X) \mathbf{\hat{u}}\right)_i  + O\left(N^{-\frac{l+1}{d}}\right),
%\]
%where $\hat{\mathbf{u}} = \left(\hat{u}(\mathbf{x}_1),\ldots,\hat{u}(\mathbf{x}_N)\right)^\top$ with $\hat{u} \in P_l^d(\underline{\textbf{x}})$. By the consistency constraint in \eqref{eqn:linear_const} for the polynomials of degree less than, it is clear that the first term above is zero. Inserting this estimate to \eqref{proof_eq1}, we have shown the consistency of the order-$N^{-\frac{l-1}{d}}$.

%\end{proof}
%in Appendix~\ref{app:A}
\begin{rem}
Since we empirically found that $C=0$ for $l=2,3$ whereas $C \neq 0$ for $l=4,5$, the diagonal dominance assumption (or equivalently $C=0$) is  valid only for $l=2,3$ in practice whereas becomes invalid for $l=4,5$. Notice that this assumption is important not only for the stability of Laplacian matrix but also for the consistency proof  as seen from \eqref{eqn:I1_v2} in which $\hat{w}_k \geq 0$ is needed for $k=2,\ldots,K$.  If $C \neq 0$ when $l\geq4$, one cannot directly follow our consistency proof since non-diagonal weights ($\hat{w}_k$ for $k\geq 2$)can be either positive or negative. We leave the proof of consistency and stability when $l \geq 4$ for future investigation.
\end{rem}

\section{Semi-torus example for known manifolds and given boundary points } \label{app:B}

In this section, we assume that we know the parameterization (or tangent spaces and projection matrices $\mathbf{P}$) of the semi-torus manifold. We also assume that we are given $\sqrt{N}$ number of random boundary points on each of the exact boundary of the semi-torus (see Fig.~\ref{figB_semit}). Except the above two assumptions, all the other setups are the same with those in Section~\ref{sec5.3}. In Fig.~\ref{figB_semit}, we numerically verify the stability, consistency, and convergence in panels (b), (c), and (d), respectively.
One can see that the consistency rates are $O(N^{-\frac{l-1}{d}})$ as predicted by Lemma~\ref{consistency}. Again, one can observe the  super-convergence for randomly sampled data case on this 2D semi-torus example when boundary points are given.

\begin{figure*}[htbp]
{\scriptsize \centering
\begin{tabular}{cc}
{\normalsize (a) sampled points given boundaries} & {\normalsize (b) stability}
\\
\includegraphics[width=3
in, height=2.2 in]{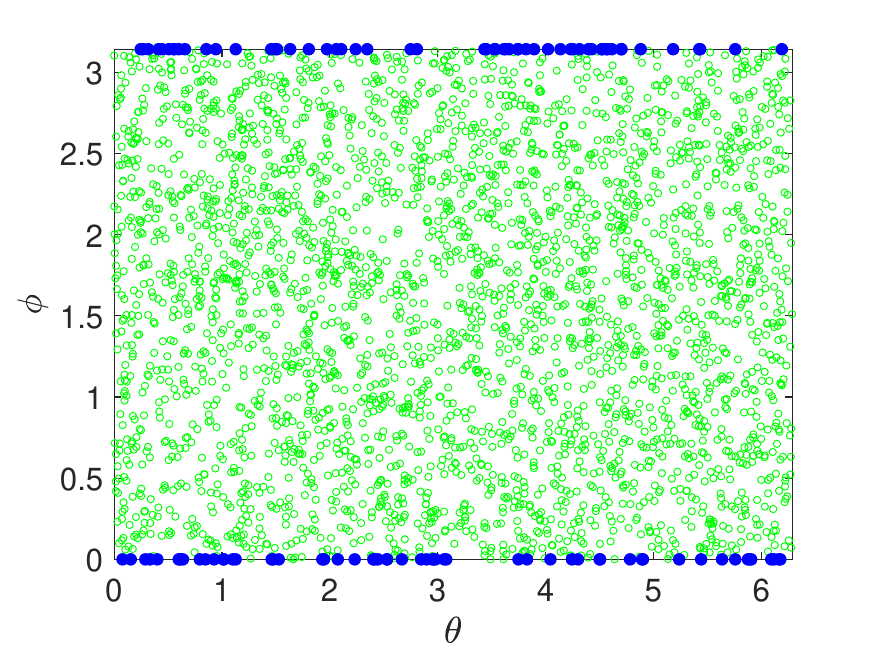} &
\includegraphics[width=3
in, height=2.2 in]{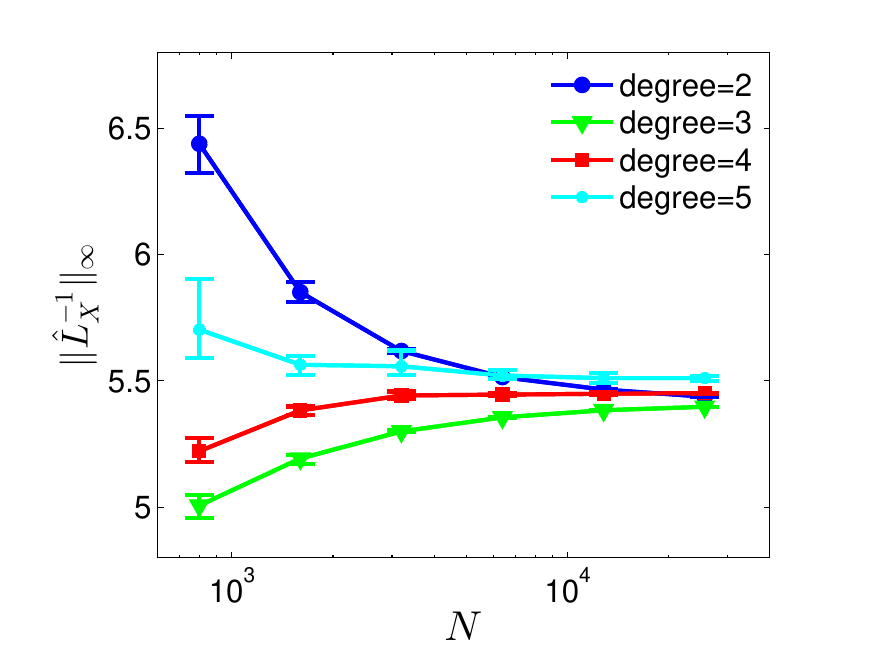}\\
{\normalsize (c) consistency} & {\normalsize (d) convergence}
\\
\includegraphics[width=3
in, height=2.2 in]{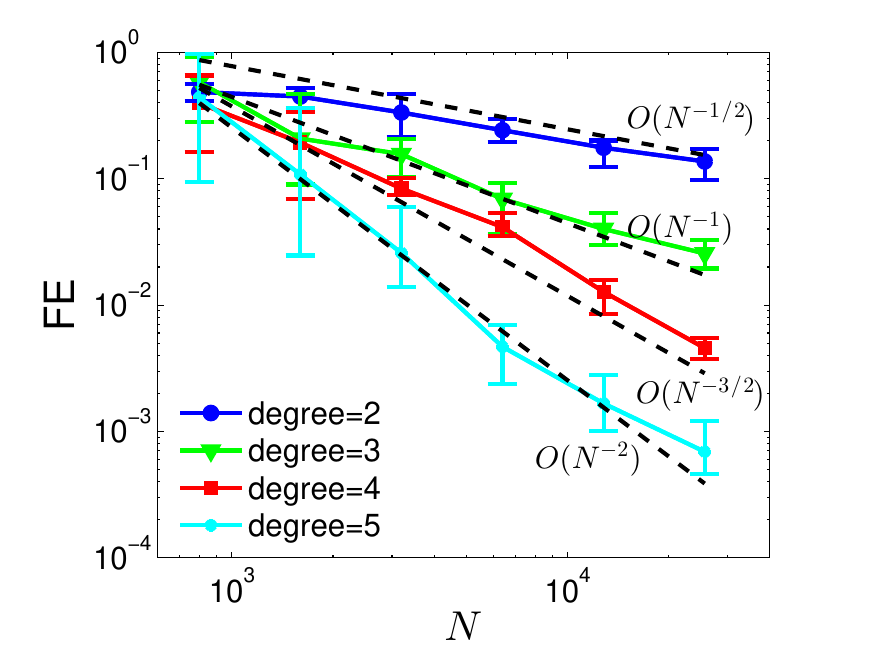} &
\includegraphics[width=3
in, height=2.2 in]{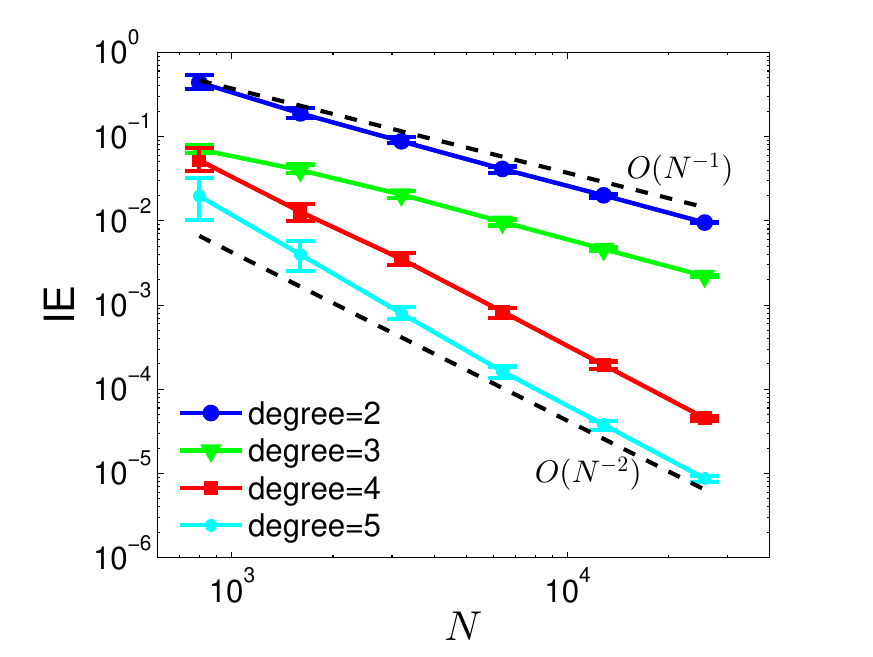}%
\end{tabular}
}
\caption{\textbf{2D semi-torus in $\mathbb{R}^3$}. $K=51$ nearest
neighbors are used. Comparison of
inverse errors (IE) of solutions between (a) with linear programming and (b) without optimization (GMLS). }
\label{figB_semit}
\end{figure*}

\section{RBF-FD and VBDM methods for diagnostic} \label{app:C}

\subsection{RBF-FD approach}

For diagnostic of numerical examples in Sections~\ref{sec5.1}-\ref{sec5.2}, we compare the proposed method to the following local approach, the RBF-generated finite difference type PDE solver:

\textbf{RBF-FD}: In \cite{shankar2015radial}, the authors proposed  the Radial Basis Functions (RBF)-generated Finite Differences (FD) scheme for approximating operators on manifolds. Instead of using the operator $\mathcal{I}_{\mathbb{P}}$ defined in Section \ref{Sec:int_LS},  we consider the  RBF interpolation operator
$$
I_\phi\textbf{f}(\textbf{x})=\sum_{k=1}^Kc_k\phi(\Vert\textbf{x}-\underline{\textbf{x}_k}\Vert),
$$
where $\underline{\textbf{x}_k}$ denotes {\color{black}the $k$th-nearest point to $\textbf{x}$.} Here, the weights $\{w_k\}_{k=1}^K$ for approximating Laplacian operator in (\ref{eqn:FD}) could be found by following the steps from (\ref{eqn:inter}) to (\ref{eqn:approximation}). In our numerical experiments, we implement the RBF-FD with the following Mat\'ern kernel,
$$
\phi_{\frac{n+3}{2}}(r)=(sr)^{3/2}\frac{e^{-sr}}{\sqrt{sr}}\left(1+\frac{1}{sr}\right)=(1+sr)e^{-sr}.
$$
where $s>0$ is the shape parameter that needs to be tuned. We should point out that the comparison will be reported for closed manifolds only. For manifolds with boundary, we do not report RBF-FD since we cannot find any meaningful convergence or accurate approximation, and we suspect that this issue is predominantly due to the randomly distributed data points. In Fig.~\ref{fig_general_torus}, we choose the shape parameter $s=0.5$. In Fig.~\ref{bunny_soln}(c), we show the result of RBF-FD where we choose the shape parameter $s=0.2$.

\subsection{VBDM approach}

For diagnostic of numerical examples in Sections~\ref{sec5.3}-\ref{sec5.4}, we compare the proposed method to the following Graph-Laplacian-based PDE solver:

\par \textbf{Variable Bandwidth Diffusion Map (VBDM):} In \cite{gh2019,harlim2020kernel,jiang2020ghost},  they consider a kernel approximation to the differential operator $\mathcal{L}^\kappa :=-\text{div}_g(\kappa\textup{grad}_g \,)$ with a fixed-bandwidth Gaussian kernel. For more accurate estimation when the data is non-uniformly distributed, we extend this graph-Laplacian approximation using the variable bandwidth kernel \cite{bh:16vb}.

Following the pointwise estimation method in \cite{liang2021solving}, we construct our estimator by choosing the kernel bandwidth to be inversely proportional to the sampling density $q$. Since the sampling density is usually unknown, we
first need to approximate it. While there are many ways to estimate density, in our algorithm we employ kernel density estimation with the following kernel that is closely related to the cKNN \cite{berry2019consistent} and self-tuning kernel \cite{zelnik2004self},
\[
K_{\epsilon ,0 }(\mathbf{x},\mathbf{y})=\exp \left( -\frac{\left\Vert
\mathbf{x}-\mathbf{y}\right\Vert^2 }{2\epsilon \rho_0(\mathbf{x})  \rho_0(\mathbf{y}) }\right),
\]
where $\rho_0(\mathbf{x}) := \Big(\frac{1}{k_2-1}\sum_{j=2}^{k_2} \|\mathbf{x} - \mathbf{x}_j \|^2\Big)^{1/2}$
denotes the average distance of $\mathbf{x}$ to the first $k_2$-nearest neighbors $\{\mathbf{x}_j\}, j=2,\ldots k_2$ excluding itself. Using this kernel, the sampling
density $q(\mathbf{x})$\ is estimated by $Q (\mathbf{x})=\sum_{j=1}^{N}K_{\epsilon ,0
}(\mathbf{x},\mathbf{x}_{j})/\rho_0( \mathbf{x})^{d}$\ at given point
cloud data. For our purpose, let us give a quick overview of the estimation of the Laplace-Beltrami operator. In this case, one chooses the bandwidth function to be $\rho(\mathbf{x})= q(\mathbf{x})^{\beta} \approx Q(\mathbf{x})^{\beta}$, with $\beta =-1/2$. With this bandwidth function, we employ the DM algebraic steps to construct  $K_{\epsilon,\rho}= \exp\left(-\Vert\mathbf{x}-\mathbf{y}\Vert^2/(4\epsilon\rho(\mathbf{x})\rho(\mathbf{y}))\right) $ and define $Q_\rho (\mathbf{x})=\sum_{j=1}^{N}K_{\epsilon ,\rho
}(\mathbf{x},\mathbf{x}_{j})/\rho( \mathbf{x})^{d}$. Then, we remove the sampling bias by applying a right
normalization $K_{\epsilon ,\rho ,\alpha }(\mathbf{x}_{i},\mathbf{x}%
_{j})= \frac{K_{\epsilon ,\rho }(\mathbf{x}_{i},\mathbf{x}_{j})}{Q_{\rho }(\mathbf{x}%
_{i})^{\alpha }Q_{\rho }(\mathbf{x}_{j})^{\alpha }}$,
where $\alpha = -d/4+1/2$
. We refer to \cite{harlim:18,bh:16vb} for more
details about the VBDM estimator of weighted Laplacian with other choices of $\alpha$ and $\beta$. Define diagonal
matrices $\mathbf{Q}$ and $\mathbf{P}$ with entries $\mathbf{Q}_{ii}=Q_{\rho
}(\mathbf{x}_{i})$ and $\mathbf{P}_{ii}=\rho (\mathbf{x}_{i})$,
respectively, and also define the symmetric matrix $\mathbf{K}$ with entries
$\mathbf{K}_{ij}=K_{\epsilon ,\rho ,\alpha }(\mathbf{x}_{i},\mathbf{x}_{j})$%
. Next, one can obtain the variable bandwidth diffusion map (VBDM)
estimator, $\mathbf{L}_{\epsilon ,\rho }:=\mathbf{P}^{-2}(\mathbf{D}^{-1}%
\mathbf{K-I})/\epsilon $, where $\mathbf{I}$\ is an identity matrix, as a discrete estimator to the Laplace-Beltrami operator in high probability. For computational efficiency, we also set $k_1$ as the nearest neighbor parameter for constructing $\mathbf{K}_{ij}$ (or eventually $\mathbf{L}_{\epsilon ,\rho }$).

We now report the specific choices for $k_1, k_2$ in each example. For VBDM in Fig~\ref{fig_semi_torus}, we choose $k_2=[15,20,28,40,55,70]$ for density estimation and $k_1=[30,40,56,80,110,140]$ nearest neighbors to construct the estimator $\mathbf{L}_{\epsilon,\rho}$.
In Fig~\ref{face_soln}, we show the  result of VBDM where we choose $k_2=64$ for density estimation and $k_1=128$ nearest neighbors to construct the estimator $\mathbf{L}_{\epsilon,\rho}$. As for the choice of parameter $\epsilon$, we simply use the automated tuning technique that is found to be robust for variable bandwidth kernels (see Section~5 of \cite{bh:16vb}).

%\bibliographystyle{abbrv}
%\bibliography{arxiv_kme}

\begin{thebibliography}{10}

\bibitem{bunny}
The stanford 3d scanning repository.
\newblock http://graphics.stanford.edu/data/3Dscanrep/.

\bibitem{ahlberg1963}
J.~H. Ahlberg and E.~N. Nilson.
\newblock Convergence properties of the spline fit.
\newblock {\em J. Soc. Ind. Appl. Math.}, 11(1):95--104, 1963.

\bibitem{bh:16vb}
T.~Berry and J.~Harlim.
\newblock Variable bandwidth diffusion kernels.
\newblock {\em Appl. Comput. Harmon. Anal.}, 40:68--96, 2016.

\bibitem{berry2017density}
T.~Berry and T.~Sauer.
\newblock Density estimation on manifolds with boundary.
\newblock {\em Comput. Statist. Data Anal.}, 107:1--17, 2017.

\bibitem{berry2019consistent}
T.~Berry and T.~Sauer.
\newblock Consistent manifold representation for topological data analysis.
\newblock {\em Found. Data Sci.}, 1(1):1, 2019.

\bibitem{bertalmio2001navier}
M.~Bertalmio, A.~L. Bertozzi, and G.~Sapiro.
\newblock Navier-stokes, fluid dynamics, and image and video inpainting.
\newblock In {\em Proceedings of the 2001 IEEE Computer Society Conference on
  Computer Vision and Pattern Recognition. CVPR 2001}, volume~1, pages I--I.
  IEEE, 2001.

\bibitem{bertalmio2001variational}
M.~Bertalm{\i}o, L.-T. Cheng, S.~Osher, and G.~Sapiro.
\newblock Variational problems and partial differential equations on implicit
  surfaces.
\newblock {\em J. Comput. Phys.}, 174(2):759--780, 2001.

\bibitem{cignoni2008meshlab}
P.~Cignoni, M.~Callieri, M.~Corsini, M.~Dellepiane, F.~Ganovelli, G.~Ranzuglia,
  et~al.
\newblock Meshlab: an open-source mesh processing tool.
\newblock In {\em Eurographics Italian chapter conference}, volume 2008, pages
  129--136. Salerno, Italy, 2008.

\bibitem{face}
K.~Crane.
\newblock Keenan's 3d model repository.
\newblock http://www.cs.cmu.edu/~kmcrane/Projects/ModelRepository.

\bibitem{croke1980some}
C.~B. Croke.
\newblock Some isoperimetric inequalities and eigenvalue estimates.
\newblock In {\em Annales scientifiques de l'{\'E}cole normale sup{\'e}rieure},
  volume~13, pages 419--435, 1980.

\bibitem{dogel2005two}
J.~Dogel, R.~Tsekov, and W.~Freyland.
\newblock {Two-dimensional connective nanostructures of electrodeposited Zn on
  Au (111) induced by spinodal decomposition}.
\newblock {\em J. Chem. Phys.}, 122(9):094703, 2005.

\bibitem{donoho2003hessian}
D.~L. Donoho and C.~Grimes.
\newblock Hessian eigenmaps: Locally linear embedding techniques for
  high-dimensional data.
\newblock {\em Proc. Natl. Acad. Sci. USA}, 100(10):5591--5596, 2003.

\bibitem{du2012analysis}
Q.~Du, M.~Gunzburger, R.~B. Lehoucq, and K.~Zhou.
\newblock Analysis and approximation of nonlocal diffusion problems with volume
  constraints.
\newblock {\em SIAM Rev.}, 54(4):667--696, 2012.

\bibitem{elliott2010modeling}
C.~M. Elliott and B.~Stinner.
\newblock Modeling and computation of two phase geometric biomembranes using
  surface finite elements.
\newblock {\em J. Comput. Phys.}, 229(18):6585--6612, 2010.

\bibitem{flyer2016role}
N.~Flyer, B.~Fornberg, V.~Bayona, and G.~A. Barnett.
\newblock {On the role of polynomials in RBF-FD approximations: I.
  Interpolation and accuracy}.
\newblock {\em J. Comput. Phys.}, 321:21--38, 2016.

\bibitem{fuselier2013high}
E.~J. Fuselier and G.~B. Wright.
\newblock A high-order kernel method for diffusion and reaction-diffusion
  equations on surfaces.
\newblock {\em J. Sci. Comput.}, 56(3):535--565, 2013.

\bibitem{gh2019}
F.~Gilani and J.~Harlim.
\newblock {Approximating solutions of linear elliptic PDE's on a smooth
  manifold using local kernel}.
\newblock {\em J. Comput. Phys.}, 395:563 -- 582, 2019.

\bibitem{gross2020meshfree}
B.~J. Gross, N.~Trask, P.~Kuberry, and P.~J. Atzberger.
\newblock {Meshfree methods on manifolds for hydrodynamic flows on curved
  surfaces: A Generalized Moving Least-Squares (GMLS) approach}.
\newblock {\em J. Comput. Phys.}, 409:109340, 2020.

\bibitem{gustafsson2007high}
B.~Gustafsson.
\newblock {\em High order difference methods for time dependent PDE},
  volume~38.
\newblock Springer Science \& Business Media, 2007.

\bibitem{harlim:18}
J.~Harlim.
\newblock {\em Data-Driven Computational Methods: Parameter and Operator
  Estimations}.
\newblock Cambridge University Press, 2018.

\bibitem{harlim2022rbf}
J.~Harlim, S.~W. Jiang, and J.~W. Peoples.
\newblock Radial basis approximation of tensor fields on manifolds: From
  operator estimation to manifold learning.
\newblock {\em arXiv preprint arXiv:}, 2021.

\bibitem{harlim2020kernel}
J.~Harlim, D.~Sanz-Alonso, and R.~Yang.
\newblock Kernel methods for bayesian elliptic inverse problems on manifolds.
\newblock {\em SIAM/ASA J. Uncertain. Quantif.}, 8(4):1414--1445, 2020.

\bibitem{jiang2020ghost}
S.~W. Jiang and J.~Harlim.
\newblock Ghost point diffusion maps for solving elliptic pdes on manifolds
  with classical boundary conditions.
\newblock {\em Comm. Pure Appl. Math.}, 76(2):337--405, 2023.

\bibitem{lee2018introduction}
J.~M. Lee.
\newblock {\em Introduction to Riemannian manifolds}.
\newblock Springer, 2018.

\bibitem{lehto2017radial}
E.~Lehto, V.~Shankar, and G.~B. Wright.
\newblock {A radial basis function (RBF) compact finite difference (FD) scheme
  for reaction-diffusion equations on surfaces}.
\newblock {\em SIAM J. Sci. Comput.}, 39(5):A2129--A2151, 2017.

\bibitem{li2016convergent}
Z.~Li and Z.~Shi.
\newblock A convergent point integral method for isotropic elliptic equations
  on a point cloud.
\newblock {\em Multiscale Model. Simul.}, 14(2):874--905, 2016.

\bibitem{li2017point}
Z.~Li, Z.~Shi, and J.~Sun.
\newblock {Point integral method for solving Poisson-type equations on
  manifolds from point clouds with convergence guarantees}.
\newblock {\em Commun. Comput. Phys.}, 22(1):228--258, 2017.

\bibitem{liang2013solving}
J.~Liang and H.~Zhao.
\newblock Solving partial differential equations on point clouds.
\newblock {\em SIAM J. Sci. Comput.}, 35(3):A1461--A1486, 2013.

\bibitem{liang2021solving}
S.~Liang, S.~W. Jiang, J.~Harlim, and H.~Yang.
\newblock Solving pdes on unknown manifolds with machine learning.
\newblock {\em arXiv preprint arXiv:2106.06682}, 2021.

\bibitem{lorensen1987marching}
W.~E. Lorensen and H.~E. Cline.
\newblock {Marching cubes: A high resolution 3D surface construction
  algorithm}.
\newblock {\em ACM Siggraph Comput. Graph.}, 21(4):163--169, 1987.

\bibitem{macdonald2010implicit}
C.~B. Macdonald and S.~J. Ruuth.
\newblock The implicit closest point method for the numerical solution of
  partial differential equations on surfaces.
\newblock {\em SIAM J. Sci. Comput.}, 31(6):4330--4350, 2010.

\bibitem{memoli2004implicit}
F.~M{\'e}moli, G.~Sapiro, and P.~Thompson.
\newblock Implicit brain imaging.
\newblock {\em NeuroImage}, 23:S179--S188, 2004.

\bibitem{mengesha2014bond}
T.~Mengesha and Q.~Du.
\newblock The bond-based peridynamic system with dirichlet-type volume
  constraint.
\newblock {\em Proc. Roy. Soc. Edinburgh Sect. A}, 144(1):161--186, 2014.

\bibitem{mirzaei2012generalized}
D.~Mirzaei, R.~Schaback, and M.~Dehghan.
\newblock On generalized moving least squares and diffuse derivatives.
\newblock {\em IMA J. Numer. Anal.}, 32(3):983--1000, 2012.

\bibitem{nitschke2020liquid}
I.~Nitschke, S.~Reuther, and A.~Voigt.
\newblock Liquid crystals on deformable surfaces.
\newblock {\em Proc. Roy. Soc. Edinburgh Sect. A}, 476(2241):20200313, 2020.

\bibitem{piret2012orthogonal}
C.~Piret.
\newblock The orthogonal gradients method: A radial basis functions method for
  solving partial differential equations on arbitrary surfaces.
\newblock {\em J. Comput. Phys.}, 231(14):4662--4675, 2012.

\bibitem{rauter2018finite}
M.~Rauter and {\v{Z}}.~Tukovi{\'c}.
\newblock A finite area scheme for shallow granular flows on three-dimensional
  surfaces.
\newblock {\em Comput. \& Fluids}, 166:184--199, 2018.

\bibitem{schoenborn1999kinetics}
O.~Schoenborn and R.~C. Desai.
\newblock Kinetics of phase ordering on curved surfaces.
\newblock {\em J. Stat. Phys.}, 95(5):949--979, 1999.

\bibitem{shankar2015radial}
V.~Shankar, G.~B. Wright, R.~M. Kirby, and A.~L. Fogelson.
\newblock {A radial basis function (RBF)-finite difference (FD) method for
  diffusion and reaction--diffusion equations on surfaces}.
\newblock {\em J. Sci. Comput.}, 63(3):745--768, 2015.

\bibitem{shi2017enforce}
Z.~Shi.
\newblock {Enforce the Dirichlet boundary condition by volume constraint in
  point integral method}.
\newblock {\em Commun. Math. Sci.}, 15(6):1743--1769, 2017.

\bibitem{shi2017weighted}
Z.~Shi, S.~Osher, and W.~Zhu.
\newblock {Weighted nonlocal Laplacian on interpolation from sparse data}.
\newblock {\em J. Sci. Comput.}, 73(2):1164--1177, 2017.

\bibitem{suchde2019meshfree}
P.~Suchde and J.~Kuhnert.
\newblock {A meshfree generalized finite difference method for surface PDEs}.
\newblock {\em Comput. Math. Appl.}, 78(8):2789--2805, 2019.

\bibitem{tian2009segmentation}
L.~Tian, C.~B. Macdonald, and S.~J. Ruuth.
\newblock Segmentation on surfaces with the closest point method.
\newblock In {\em 2009 16th IEEE International Conference on Image Processing
  (ICIP)}, pages 3009--3012. IEEE, 2009.

\bibitem{tyagi2013tangent}
H.~Tyagi, E.~Vural, and P.~Frossard.
\newblock {Tangent space estimation for smooth embeddings of Riemannian
  manifolds}.
\newblock {\em Information and Inference: A Journal of the IMA}, 2(1):69--114,
  2013.

\bibitem{varah1975}
J.~M. Varah.
\newblock A lower bound for the smallest singular value of a matrix.
\newblock {\em Linear Algebra Appl.}, 11(1):3--5, 1975.

\bibitem{walker2018felicity}
S.~W. Walker.
\newblock Felicity: A matlab/c++ toolbox for developing finite element methods
  and simulation modeling.
\newblock {\em SIAM J. Sci. Comput.}, 40(2):C234--C257, 2018.

\bibitem{Wendland2005Scat}
H.~Wendland.
\newblock {\em Scattered Data Approximation}.
\newblock Cambridge University Press, 2005.

\bibitem{yan2022ghost}
Q.~Yan, S.~Jiang, and J.~Harlim.
\newblock Kernel-based methods for solving time-dependent advection-diffusion
  equations on manifolds.
\newblock {\em J. Sci. Comput.}, 94(1), 2023.

\bibitem{yan2023spectral}
Q.~Yan, S.~W. Jiang, and J.~Harlim.
\newblock Spectral methods for solving elliptic pdes on unknown manifolds.
\newblock {\em J. Comput. Phys.}, 486:112132, 2023.

\bibitem{zelnik2004self}
L.~Zelnik-Manor and P.~Perona.
\newblock Self-tuning spectral clustering.
\newblock {\em Advances in neural information processing systems}, 17, 2004.

\bibitem{zhang2004principal}
Z.~Zhang and H.~Zha.
\newblock Principal manifolds and nonlinear dimensionality reduction via
  tangent space alignment.
\newblock {\em SIAM J. Sci. Comput.}, 26(1):313--338, 2004.

\bibitem{zhao2001fast}
H.-K. Zhao, S.~Osher, and R.~Fedkiw.
\newblock Fast surface reconstruction using the level set method.
\newblock In {\em Proceedings IEEE Workshop on Variational and Level Set
  Methods in Computer Vision}, pages 194--201. IEEE, 2001.

\end{thebibliography}

\end{document}